\newtheorem{theorem}{Theorem}
\newtheorem{lemma}{Lemma}
\newtheorem{proposition}{Proposition}
\newtheorem{corollary}{Corollary}
\theoremstyle{definition}
\newtheorem{definition}[theorem]{Definition}
\newtheorem{example}[theorem]{Example}
\theoremstyle{remark}
\newtheorem{remark}{Remark}
\numberwithin{equation}{section}
\numberwithin{theorem}{section}
\numberwithin{proposition}{section}
\begin{document}

\title[Large Fluctuations in Volterra Summation Equations]
{Large fluctuations and growth rates of linear Volterra summation equations}

\author{John A. D. Appleby}
\address{School of Mathematical
Sciences, Dublin City University, Glasnevin, Dublin 9, Ireland}
\email{john.appleby@dcu.ie} \urladdr{webpages.dcu.ie/\textasciitilde
applebyj}

\author{Denis D. Patterson}
\address{School of Mathematical
Sciences, Dublin City University, Glasnevin, Dublin 9, Ireland}
\email{denis.patterson2@mail.dcu.ie}
\urladdr{sites.google.com/a/mail.dcu.ie/denis-patterson}

\thanks{Denis Patterson is supported by the Irish Research Council grant GOIPG/2013/402.} 
\subjclass[2010]{Primary: 34K25; Secondary: 34K12, 34F05.}
\keywords{Volterra summation equation, growth rates, unbounded solutions, asymptotics, stochastic}
\date{14th October 2016}

\begin{abstract}
This paper concerns the asymptotic behaviour of solutions of a linear convolution Volterra summation equation with an unbounded forcing term. In particular, we suppose the kernel is summable and ascribe growth bounds to the exogenous perturbation. If the forcing term grows at a geometric rate asymptotically or is bounded by a geometric sequence, then the solution (appropriately scaled) omits a convenient asymptotic representation. Moreover, this representation is used to show that additional growth properties of the perturbation are   preserved in the solution. If the forcing term fluctuates asymptotically, we prove that fluctuations of the same magnitude will be present in the solution and we also connect the finiteness of time averages of the solution with those of the perturbation. Our results, and corollaries thereof, apply to stochastic as well as deterministic equations, and we demonstrate this by studying some representative classes of examples. Finally, we show that our theory can be extended to cover a class of nonlinear equations via a straightforward linearisation argument.
\end{abstract}

\maketitle

\section{Introduction}
Volterra equations, both discrete and continuous, have found myriad applications in the field of economics and it is this area of application we have in mind throughout the present work. In the context of economic growth models it is especially pertinent to analyze qualitative features of unbounded solutions, such as growth rates and fluctuation sizes, and hence we determine conditions under which the forced Volterra summation equation 
\begin{equation} \label{eq.x}
x(n+1)= {\sum_{j=0}^{n}} k(n-j) x(j) + H(n+1),\quad n\geq 0; \quad x(0)=\xi\in \mathbb{R}
\end{equation}
has unbounded solutions with additional growth properties. 

We assume henceforth that 
\begin{equation} \label{eq.k}
k\in \ell^1(\mathbb{Z}^+) 
\end{equation}
and that the unperturbed, or resolvent, equation 
\begin{equation} \label{eq.r}
r(n+1)=\sum^{n}_{j=0}k(n-j) r(j), \quad n\geq 0; \quad r(0)=1.
\end{equation}
has a summable solution, that is
\begin{equation} \label{eq.rinl1}
r\in \ell^1(\mathbb{Z}^+).
\end{equation}
Of course, the summability of $r$ can be characterised entirely in terms of the kernel $k$; in particular, $r\in \ell^1(\mathbb{Z}^+)$ is equivalent to the characteristic equation condition
\begin{equation} \label{eq.chareqn}
	1-\sum^\infty_{l=0}k(l) z^{-(l+1)}\neq 0, \quad \text{ for all $z \in \mathbb{C}$ with }|z|\geq 1.
\end{equation}
In the important and special case that $k(n)\geq 0$ for all $n\geq 0$, the condition \eqref{eq.chareqn} is equivalent to 
$
\sum_{j=0}^\infty k(j)<1.
$ Hence, a useful and sharp sufficient condition for $r$ to be summable, which does not require sign conditions on $k$, is $\sum_{j=0}^\infty |k(j)|<1$.

The summability of $r$ is intimately related to the boundedness of the solution to \eqref{eq.x} under bounded perturbations. In fact, it is true that: 
\begin{itemize}
\item[(a.)] If $r$ is summable, then $x$ is bounded if and only if $H$ is bounded,
\item[(b.)] If, for every bounded sequence $H$, $x$ is bounded, then $r$ is summable, 
\end{itemize}
(see Corduneanu \cite{corduneanu1973integral} and Perron \cite{perron1930stabilitatsfrage}). From (a.) it is clear that if $H$ is unbounded, then so is $x$. In this paper, we seek to understand how more refined properties of unbounded forcing sequences $H$ give rise to corresponding unboundedness properties of $x$ and in this sense our results are related to classic admissibility theory for Volterra equations. The right--hand side of \eqref{eq.x} defines a linear Volterra operator and if this operator maps a space $S$ onto itself then $S$ is said to be admissible with respect to the operator (cf. \cite{gyori2012boundedness,gyHori2010asymptotically,gyHori2008new,gyHori2009sharp,gyHori2010admissibility,song2004linearized,song2006admissibility}). Typical admissibility results for operators of the type considered in this paper assert that for every $H \in S$, we will have $x \in S$ (cf. Gripenberg et al. \cite[Theorem 2.4.5]{GLS}), where $S$ is a ``standard'' space (such as the space of bounded, convergent, periodic, or $\ell^p$ sequences). In this context, the main contribution of the present work is to expand the collection of admissible spaces for the discrete Volterra operator defined by \eqref{eq.x} to spaces more germane to economic applications (as opposed to the classic theory which studies spaces more appropriate for applications in engineering and related areas). In particular, we show that if the unbounded sequence $H$ has an interesting property $A$ which characterises its growth or fluctuation, then $x$ possesses the property $A$ as well; in many situations the converse also holds (cf. Appleby and Patterson \cite{appleby2016unbounded}).

We investigate both bounds on the fluctuations of solutions, and on exact rates of growth. 
$H$ is assumed to be unbounded, but its growth bounds are characterised, in the sense that there is an increasing 
sequence $(a(n))_{n \geq 0}$ with $a(n)\to\infty$ as $n\to\infty$ such that 
\begin{equation}\label{intro_Lambda_a}
\limsup_{n\to\infty} \frac{|H(n)|}{a(n)} =:\Lambda_a|H|.
\end{equation}
It is already known in the case when $a(n)\to +\infty$ as $n\to\infty$ (and is increasing) that $\Lambda_a|H|$ finite implies $\Lambda_a|x|$ finite (see, for example, Gol'denger{\^s}hel' \cite{gol1966discrete}). We show that the existence of finite, zero, or infinite values of $\Lambda_a|x|$ and $\Lambda_a|H|$ are closely linked. Specifically, let $a$ be a monotone sequence with $a(n)\to\infty$ as $n\to\infty$, and define the sequence spaces 
\begin{align*}
B_a&=\left\{ (g(n))_{n\geq 0}: \limsup_{n\to\infty} |g(n)|/a(n)<+\infty\right\}, \\
B_a(0)&=\left\{ (g(n))_{n\geq 0}: \limsup_{n\to\infty} |g(n)|/a(n)=0\right\}, \\
B_a(+)&=\left\{ (g(n))_{n\geq 0}: \limsup_{n\to\infty} |g(n)|/a(n)\in (0,\infty)\right\}, \\
U_a&=\left\{ (g(n))_{n\geq 0}: \limsup_{n\to\infty } |g(n)|/a(n)=+\infty\right\}.
\end{align*}
We show that $x\in V$ if and only if $H\in V$, where $V$ is one of the spaces listed above (see Theorem \ref{theorem.fluct}). The aforementioned analysis is relatively straightforward and we expand upon it to establish more refined properties of solutions by asking more of the forcing sequence $H$ and the normalising sequence $a$.

Results of the type described above are especially interesting if $H$ is a stochastic process, since they show that the large fluctuations in $H$ determine those in $x$: we cannot get ``smaller'' fluctuations in $x$ than those of $H$, but neither can we get larger ones.
The limits zero and infinity in equation \eqref{intro_Lambda_a} are of special interest in probability theory. We provide examples when $H$ is a sequence of independent and identically distributed random variables in which a sequence $a$ cannot be found so that $\Lambda_a|H|$ is nontrivial (but that bounding sequences $a_\pm$ can be found so that $\Lambda_{a_+}|H|=0$ and $\Lambda_{a_-}|H|=+\infty$).

Moreover, we can characterise an important class of asymptotic growth. To this end, define the space 
\begin{equation}\label{def.Glam}
G_\lambda=\bigg\{(g(n))_{n\geq 0}: \text{$|g(n)|\to \infty$ as $n\to\infty$, }
\lambda:=\lim_{n\to\infty} \frac{g(n-1)}{g(n)}\in[0,1]\bigg\},
\end{equation}
and the following equivalence relation on the space of real--valued sequences:
\begin{definition}\label{equiv_relation}
$(y(n))_{n \geq 0}$ and $(z(n))_{n \geq 0}$ are asymptotically equivalent if $y(n)-z(n) \to 0$ as $n\to\infty$. We write $y(n) \sim z(n)$ as $n\to\infty$, or $y \sim z$, for short.
\end{definition}
We remark that definition \ref{equiv_relation} differs from the standard asymptotic notation in which $y(n) \sim z(n)$ as $n\to\infty$ is taken to mean that $\lim_{n\to\infty}y(n)/z(n)=1$. We do however occasionally employ the usual Landau notation in which ``$y$ is $o(z)$'' means that $y(n)/z(n) \to 0$ as $n\to\infty$.

The space $G_\lambda$ is related to a class of weight functions first introduced by Chover et al. \cite{chover1973functions} and later employed by Appleby et al. \cite{appleby2006exact} to calculate rates of convergence to zero of solutions to linear Volterra convolution problems. Intuitively, we use sequences in $G_\lambda$ to scale unbounded quantities of interest in much the same way time--series or economic growth models are detrended; we are particularly interested in conditions under which economically relevant growth properties are preserved by this scaling process.

It transpires that $x\in G_\lambda$ if and only if $H\in G_\lambda$, and both imply that $\lim_{n\to\infty} x(n)/H(n)$ is finite, nontrivial, and can be computed explicitly in terms of $k$ and $\lambda$ (see Theorem \ref{theorem.growth2}). 

We also consider a larger class of sequences which are bounded by sequences in $G_\lambda$. Define, for $a \in G_\lambda$, the space
\begin{equation}\label{def.BG_lam}
BG_{a,\lambda}=\left\{(g(n))_{n\geq 0}: \frac{g(n)}{a(n)} \sim (\lambda_a g)(n) \text{ is bounded}\right\}.
\end{equation}
The sequence $\lambda_a g$ in \eqref{def.BG_lam} is only defined up to asymptotic equivalence but one could of course choose $(\lambda_a g)(n) = g(n)/a(n)$ for definiteness. With the notation outlined above, $x\in BG_{a,\lambda}$ if and only if $H\in BG_{a,\lambda}$
and the sequence $\lambda_a x$ can be taken as
\begin{equation} \label{eq.lamxlamh}
(\lambda_a x)(n) \sim (\lambda_a H)(n)+\sum_{j=1}^n r(j)\lambda^j (\lambda_a H)(n-j), \quad \text{ as $n\to\infty$}.
\end{equation}
The result stated above (Theorem \ref{theorem.growth3}) is of particular interest if the forcing term grows in a reasonably regular manner, but has proportional fluctuations around a growth path given by an increasing sequence. This allows, for example, for cyclic growth in $H$ around an exponential trend, leading to similar cyclic growth in $x$ (see Proposition \ref{prop.periodic} and Example \ref{eq.periodic}). 

On the other hand, if $H$ is experiencing fluctuations, and the large fluctuations of $H$ can be crudely bounded by an increasing sequence $a$, the asymptotic relation \eqref{eq.lamxlamh} shows how the large fluctuations in $x$ arise as a ``weighted average'' of the fluctuations in $H$. In a sense, if the decay in $k$ is relatively slow, then $r$ tends to experience a slower decay to zero (see Appleby et al. \cite{appleby2006exact}), so the weight attached to big values of $\lambda_a H$ in the past tends to be larger, and so the present impact of fluctuations in $H$ in the past lingers longer in the fluctuations in $x$. This mechanism also explains how fluctuations around a growth trend in $H$ propagate through to those in $x$. 

We also prove nonlinear variants of each of the growth and fluctuation results outlined above. If the linear term $x(j)$ in the convolution in \eqref{eq.x} is replaced by a nonlinear term $f(x(j))$ such that $f(x)/x\to 1$ as $|x|\to\infty$, then the nonlinear equation thus formed inherits all the growth properties of the underlying linearised equation.

Our proofs primarily rely on the following  variation of constants formula for the \emph{convolution} problem \eqref{eq.x} which allows $x$ to be written directly in terms of $r$ and $H$ (see, for example, Elaydi \cite{elaydi2005introduction}):
\begin{equation} \label{eq.xrep}
x(n)=r(n)x(0)+\sum_{j=1}^n r(n-j)H(j), \quad n\geq 1.
\end{equation}
Since our results frequently involve the quantity $x/a$, where $a$ is a sequence which captures the growth of the unbounded forcing term, another natural line of attack (following Appleby et al. \cite{appleby2006exact} and Reynolds \cite{reynolds2012asymptotic}) would be to rewrite \eqref{eq.x} as follows:
\[
\frac{x(n+1)}{a(n+1)} = \sum_{j=0}^n k(n-j) \frac{x(j)}{a(n+1)} + \frac{H(n+1)}{a(n+1)}, \quad n \geq 0.
\]
We can now let $\tilde{x}(n) = x(n)/a(n)$ for each $n \geq 0$ and consider the \emph{nonconvolution} problem given by
\[
\tilde{x}(n+1) = \sum_{j=0}^n \tilde{k}(n,j) \tilde{x}(j) + \tilde{H}(n+1), \quad n \geq 0,
\]
where $\tilde{k}(n,j) = k(n-j)a(j)/a(n+1)$ for each $n \geq 0$ and $j \in \{0,\dots, n\}$. While this alternative approach would likely lead to more general results, it would also offer weaker, or less precise, conclusions. Hence we prefer to exploit the convolution structure of \eqref{eq.x} and use the formula \eqref{eq.xrep}, as opposed to its nonconvolution analogue (see Vecchio \cite{vecchio2004resolvent}); we believe this approach leads to results more likely to be of use in economic applications where it is of interest to freeze the asymptotically autonomous structure of the equation for the purposes of fitting to data and parsimonious modelling. The aforementioned considerations notwithstanding, it would be interesting to compare the conclusions of the present work with those garnered through the nonconvolution perspective.

The next two sections state and discuss the main results. In Section \ref{sec.growth.economics} we prove two general results on growth of solutions to \eqref{eq.x} and detail some refinements of our main results which we feel are particularly germane in an economic context. We then prove results related to fluctuations and time averages of solutions in Section \ref{sec.fluct.time}. Section \ref{sec.examples} provides practical examples of our theory and outlines applications to stochastic equations. Finally, Section \ref{linearisation} describes the extension of our linear theory to a class of nonlinear equations. The proofs of the main results are largely postponed to the final sections of the paper.

\section{Growth Rates and Economic Applications}\label{sec.growth.economics}
Before stating and discussing our main results we first provide a brief motivation for our interest in equations such as \eqref{eq.x} and outline connections to applications (see also  \cite{appleby2016unbounded} where much of the following discourse is elaborated upon).

When $H \equiv 0$, the right--hand side of \eqref{eq.x} defines an asymptotically autonomous convolution operator. This is particularly desirable in the context of applications where we generally wish to keep the structure of models time independent, not least because we prefer models with time--invariant properties amenable to statistical inference. Another feature of \eqref{eq.x} worth remarking upon is our decision to write ``$H(n+1)$'' as opposed to ``$H(n)$'' when denoting the forcing term. Both formulations are common in the literature but we prefer the former because we are expressly interested in applications to random forcing sequences. In particular, if $H$ is a stochastic process and each random variable $H(n)$ is $\mathcal{F}(n)$--measurable (for each $n \geq 0$), then $x(n)$ is also $\mathcal{F}(n)$--measurable. Hence the value of $x(n)$ is not known with certainty by observers of the system until time $n$; this is the natural setup for problems arising in an economic context (and indeed in most other applications).

One well--known class of economic models which has a formulation closely related to \eqref{eq.x} is the classic dynamic linear multidimensional Leontief input--output model (see Leontief \cite{leontief, leontief1966input}). In the Leontief model, $H$ represents final demand, $x$ is output or production, and the contribution of the convolution term is known as intermediate demand. Time lags reflect the fact that there is a delay between production and satisfaction of final demand. Due to its linear structure, and the exogenous nature of the forcing term, \eqref{eq.x} is also reminiscent of classic time series models. For example, with appropriate choice of $H$, \eqref{eq.x} is particularly closely related to ARMA$(p,q)$ and AR$(\infty)$ models (see, for example, Brockwell and Davis \cite{brockwell2013time}). Such models are often used to capture so--called long range dependence or long memory phenomena, which have been shown to arise in a variety of applied contexts (see Baillie \cite{baillie1996long}, and Ding and Grainger \cite{ding1996modeling}). In contrast to classical analysis of time series models, we focus not on studying the autocovariance function of solutions but on pathwise properties of solutions inherited from the exogenous forcing term.

Our first result below characterises the rate of growth of solutions to \eqref{eq.x} in terms of the sequence space $G_\lambda$, defined by \eqref{def.Glam}.
\begin{theorem} \label{theorem.growth2}
If $k$ and the solution $r$ of \eqref{eq.r} are summable, and $x$ is the solution to \eqref{eq.x}, then the following are equivalent:
\begin{itemize}
\item[(a.)] $H\in G_\lambda$;
\item[(b.)] $x\in G_\lambda$.
\end{itemize}
Moreover, both imply that
\begin{equation} \label{eq.xHlim}
\lim_{n\to\infty} \frac{x(n)}{H(n)}=L,
\end{equation}
where 
\begin{equation}   \label{eq.L}
L=
\frac{1}{1-\sum_{l=0}^\infty \lambda^{l+1} k(l)}, \quad \lambda \in [0,1].
\end{equation}
\end{theorem}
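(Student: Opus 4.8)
The plan is to exploit the variation of constants formula \eqref{eq.xrep}, which expresses $x$ directly in terms of the summable resolvent $r$ and the forcing term $H$. The key structural observation is that $G_\lambda$ consists of sequences with a well-defined asymptotic ratio $g(n-1)/g(n) \to \lambda$, so that iterating gives $g(n-j)/g(n) \to \lambda^j$ as $n\to\infty$ for each fixed $j$. This suggests that the heart of the matter is a discrete convolution limit: if one divides \eqref{eq.xrep} by $H(n)$, one obtains
\begin{equation*}
\frac{x(n)}{H(n)} = \frac{r(n)x(0)}{H(n)} + \sum_{j=1}^n r(n-j)\frac{H(j)}{H(n)},
\end{equation*}
and the goal is to pass to the limit term by term.

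First I would establish the direction (a.) $\Rightarrow$ (b.) together with the limit \eqref{eq.xHlim}. Reindexing the convolution sum by $i = n-j$ rewrites it as $\sum_{i=0}^{n-1} r(i) H(n-i)/H(n)$. Since $H\in G_\lambda$, for each fixed $i$ we have $H(n-i)/H(n)\to\lambda^i$, and because $|H(n)|\to\infty$ the boundary term $r(n)x(0)/H(n)\to 0$. The natural candidate for the limit is therefore $\sum_{i=0}^\infty r(i)\lambda^i$. To justify interchanging the limit and the infinite sum I would seek a dominated-convergence-type bound: the ratios $H(n-i)/H(n)$ need to be controlled uniformly in $i$ by a summable sequence against $|r(i)|$. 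The monotonicity and divergence of $|H|$, combined with $\lambda\in[0,1]$ and summability of $r$, should furnish such a bound, at least after splitting the sum into a fixed finite head (handled by termwise convergence) and a tail (handled by the summability of $r$ and a uniform bound on the ratios). This yields $x(n)/H(n)\to \sum_{i=0}^\infty r(i)\lambda^i =: L$, and the final step is to identify this series with the closed form \eqref{eq.L} using the resolvent/characteristic relation \eqref{eq.chareqn}: summing the resolvent recursion \eqref{eq.r} weighted by $\lambda^n$ should give $\sum_{i=0}^\infty r(i)\lambda^i = (1-\sum_{l=0}^\infty \lambda^{l+1}k(l))^{-1}$, with the denominator nonzero precisely because of \eqref{eq.chareqn} evaluated near the relevant point. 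It remains to check $x\in G_\lambda$, i.e. that $|x(n)|\to\infty$ and $x(n-1)/x(n)\to\lambda$; the former follows since $x(n)\sim L\,H(n)$ with $L\neq 0$, and the latter because $x(n-1)/x(n) = (x(n-1)/H(n-1))(H(n-1)/H(n))(H(n)/x(n)) \to L\cdot\lambda\cdot L^{-1}=\lambda$.

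For the converse (b.) $\Rightarrow$ (a.), I would invert the variation of constants formula: the resolvent relationship implies a symmetric representation of $H$ in terms of $x$ and $k$ (essentially solving \eqref{eq.x} for $H$), namely $H(n) = x(n) - \sum_{j=0}^{n-1} k(n-1-j)x(j)$. The identical convolution-limit argument, now with $k$ in place of $r$ and $x\in G_\lambda$ in place of $H$, shows $H(n)/x(n)\to 1 - \sum_{l=0}^\infty \lambda^{l+1}k(l) = L^{-1}\neq 0$, which gives both $|H(n)|\to\infty$ and, by the same ratio manipulation as above, $H(n-1)/H(n)\to\lambda$, so $H\in G_\lambda$ with the reciprocal limit, consistent with \eqref{eq.xHlim}.

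The main obstacle I anticipate is the uniform domination needed to justify interchanging the limit with the infinite convolution sum, since $H\in G_\lambda$ only controls consecutive ratios in the limit and not uniformly from the outset; converting the asymptotic condition $H(n-1)/H(n)\to\lambda$ into a uniform bound of the form $|H(n-i)/H(n)| \leq C\rho^i$ for some summable majorant, valid for all large $n$ and all $i\le n$, is the delicate estimate. The boundary case $\lambda=1$ (where $|H|$ grows subgeometrically) deserves separate care, since then the ratios tend to $1$ and summability of the dominating series leans entirely on the summability of $r$ rather than on geometric decay of the ratios.
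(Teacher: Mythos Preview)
Your direct approach via the variation of constants formula is sound and yields the result. The paper takes a different, more indirect route: it first establishes the more general Theorem~\ref{theorem.growth3} concerning the space $BG_{a,\lambda}$, and then obtains Theorem~\ref{theorem.growth2} as the special case $a=H$, $\lambda_a H\equiv 1$ (and symmetrically $a=x$, $\lambda_a x\equiv 1$ for the converse). Both arguments share the computation of $\sum_{j\ge 0} r(j)\lambda^j$ from the resolvent recursion and the symmetric handling of the converse through $H(n)=x(n)-\sum_{j}k(n-1-j)x(j)$. What your route buys is a self-contained proof of Theorem~\ref{theorem.growth2} that does not presuppose the heavier asymptotic-representation machinery of Theorem~\ref{theorem.growth3}; what the paper's route buys is modularity, since the same representation \eqref{x_asym_relation} then drives Propositions~\ref{prop.periodic} and~\ref{prop.ergodic} as further corollaries. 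The head--tail split you propose is exactly the mechanism underlying the paper's estimates \eqref{est_a}--\eqref{est_e} in the proof of Theorem~\ref{theorem.growth3}, and the delicate point you isolate for $\lambda=1$ corresponds precisely to the paper's invocation there of a uniform bound $|a(j)/a(n)|\le\bar A$; so your caution is well placed, and the two approaches lean on the same estimate at the same juncture.
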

When $\lambda=0$ we see that the sum in \eqref{eq.L} collapses to zero, so that $L=1$. The quantity 
$L$ in \eqref{eq.xHlim} is always nontrivial (and finite) because the summability of $r$ implies 
from \eqref{eq.chareqn} that $1-\sum_{j=0}^\infty k(j)\lambda^{j+1} \neq 0$ for $\lambda\in[0,1]$. When $k$ is positive, there is a ``multiplier effect'' from the input sequence $k$ to the output $x$, because $L>1$ when $\lambda>0$. Equally, there is no multiplier effect if $\lambda=0$. In Section \ref{subsec.growth} we show how Theorem \ref{theorem.growth2} can be used to deal with random forcing sequences which have appropriate structure.

Our next result is similar in spirit to Theorem \ref{theorem.growth2} but deals with more general growth of the type characterised by the space $BG_{a,\lambda}$, defined by \eqref{def.BG_lam}.
\begin{theorem} \label{theorem.growth3}
If $k$ and the solution $r$ of \eqref{eq.r} are summable, and $x$ is the solution to \eqref{eq.x}, then the following are equivalent:
\begin{itemize}
	\item[(a.)] $H\in BG_{a,\lambda}$;
	\item[(b.)] $x\in BG_{a,\lambda}$.
\end{itemize}
Moreover, when $H \in BG_{a,\lambda}$,
\begin{equation}\label{x_asym_relation}
 \frac{x(n)}{a(n)} \sim   (\lambda_a H)(n) + \sum_{j=1}^n r(j)\lambda^j (\lambda_a H)(n-j), \mbox{ as }n\to\infty,
\end{equation}
and similarly, when $x \in BG_{a,\lambda}$,
\begin{equation}\label{H_asym_relation}
\frac{H(n)}{a(n)} \sim (\lambda_a x)(n) - \sum_{j=0}^{n-1} k(j)\lambda^{j+1} (\lambda_a x)(n-j-1), \mbox{ as }n\to\infty.
\end{equation}
\end{theorem}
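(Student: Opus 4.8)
The plan is to read both implications off two ``dual'' representations of the solution. For (a.)$\Rightarrow$(b.) I would divide the variation of constants formula \eqref{eq.xrep} by $a(n)$, and for (b.)$\Rightarrow$(a.) I would instead rearrange the defining recursion \eqref{eq.x} into $H(n)=x(n)-\sum_{j=0}^{n-1}k(n-1-j)x(j)$ (valid for $n\geq1$) and divide by $a(n)$. After these manipulations the two directions reduce to the same analytic core, so I would prove that core once as a lemma and invoke it twice. Throughout I would work with a fixed bounded representative of $\lambda_a H$ (resp. $\lambda_a x$); recall that $H\in BG_{a,\lambda}$ means precisely that this representative is bounded.

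For (a.)$\Rightarrow$(b.), the term $r(n)x(0)/a(n)$ tends to $0$ because $r(n)\to0$ (as $r\in\ell^1$) while $a(n)\to\infty$. Writing $H(j)/a(n)=(a(j)/a(n))\,(\lambda_a H)(j)$ up to an asymptotically null contribution (again controlled by the lemma below) and reindexing by $i=n-j$, the remaining sum becomes $\sum_{i=0}^{n-1}r(i)\,w(n,i)\,(\lambda_a H)(n-i)$ with weights $w(n,i)=a(n-i)/a(n)$. For (b.)$\Rightarrow$(a.) the same substitution and the reindexing $i=n-1-j$ turn the convolution into $\sum_{i=0}^{n-1}k(i)\,w(n,i)\,(\lambda_a x)(n-1-i)$ with $w(n,i)=a(n-1-i)/a(n)$. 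In each case the target relations \eqref{x_asym_relation}, \eqref{H_asym_relation} amount to replacing the weights $w(n,i)$ by their pointwise limits inside the sum.

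Those limits come from a telescoping identity: since $a\in G_\lambda$, for each fixed $i$
\[
\frac{a(n-i)}{a(n)}=\prod_{l=0}^{i-1}\frac{a(n-l-1)}{a(n-l)}\longrightarrow\lambda^i,\qquad\text{and similarly}\qquad\frac{a(n-1-i)}{a(n)}\longrightarrow\lambda^{i+1},
\]
while $a$ being eventually positive and increasing gives $0\le w(n,i)\le1$ for all large $n$. The core lemma then reads: if $c\in\ell^1$, $(u(m))$ is bounded, $|w(n,i)|\le C$, and $w(n,i)\to w_\infty(i)$ for each fixed $i$, then $\sum_{i=0}^{n-1}c(i)\,[w(n,i)-w_\infty(i)]\,u(n-i)\to0$; this is dominated convergence for series, the summand being dominated by the $n$-independent summable majorant $2C\|u\|_\infty|c(i)|$. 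Applying it with $c=r$, $w_\infty(i)=\lambda^i$, $u=\lambda_a H$ yields \eqref{x_asym_relation}, and with $c=k$, $w_\infty(i)=\lambda^{i+1}$, $u=\lambda_a x$ yields \eqref{H_asym_relation}. Finally, since $\sum_j|r(j)|\lambda^j\le\|r\|_{\ell^1}$ and $\sum_j|k(j)|\lambda^{j+1}\le\lambda\|k\|_{\ell^1}$, the convolutions on the right-hand sides define bounded sequences, so each asymptotic relation forces $x/a$ (resp. $H/a$) to be bounded; this is exactly the membership needed to close the equivalence and to identify the displayed sequences as admissible choices of $\lambda_a x$ (resp. $\lambda_a H$).

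The main obstacle is precisely this interchange of limit and summation: the number of summands grows with $n$ and the weights themselves depend on $n$, so the argument must pair the summability of $r$ (resp. $k$) against the boundedness of $\lambda_a H$ (resp. $\lambda_a x$) to produce the $n$-independent summable majorant before pointwise convergence of the weights can be used. Two routine points remain to be checked: the finitely many initial indices where $a$ may fail to be positive contribute only a null sequence (each such term carries a factor $r(n-j)/a(n)\to0$) and so may be discarded; and the right-hand sides of \eqref{x_asym_relation}, \eqref{H_asym_relation} are well defined despite $\lambda_a H$, $\lambda_a x$ being fixed only up to asymptotic equivalence, since replacing $\lambda_a H$ by an equivalent sequence changes $\sum_j r(j)\lambda^j(\lambda_a H)(n-j)$ by $\sum_j r(j)\lambda^j\delta(n-j)$ with $\delta(m)\to0$, which tends to $0$ by the same head/tail splitting.
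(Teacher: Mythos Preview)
Your proposal is correct and follows the same route as the paper: both directions start from the variation of constants formula (resp.\ the rearranged recursion) divided by $a(n)$, and the analytic core is passing the limit through the weighted convolution sum using the summability of $r$ (resp.\ $k$), the boundedness of $\lambda_a H$ (resp.\ $\lambda_a x$), and the pointwise convergence $a(n-i)/a(n)\to\lambda^i$. The paper carries this out via an explicit $\epsilon$--$N$ splitting of the sum into five pieces (estimates \eqref{est_a}--\eqref{est_e}), whereas you package the identical head/tail estimate as a dominated-convergence lemma for series; both proofs rely on the same uniform bound on $a(j)/a(n)$, which the paper likewise invokes without deriving it from $a\in G_\lambda$ alone.
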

There is one feature in particular of the asymptotic representations in Theorem \ref{theorem.growth3} which we consider noteworthy. In both \eqref{x_asym_relation} and \eqref{H_asym_relation} the summands decay rapidly to zero if $\lambda \in (0,1)$ and hence the first few terms in the resolvent/kernel sequence are most important. The kernel is in principle known (or could be approximated by time series techniques in the case that $H$ is a stationary process), so there is no difficulty with regard to \eqref{H_asym_relation}. However, to the best of our knowledge, there is limited theory for small time or transient behaviour of the resolvent, although some global results regarding the differential resolvent are available (see, for example, Gripenberg et al. \cite[Theorem 5.4.1]{GLS}). Of course, in practice, the first few terms of $(r(n))_{n \geq 0}$ can be easily calculated by hand and this will likely provide sufficient insight in many instances.

As we will see momentarily, the main strengths of Theorem \ref{theorem.growth3} are its generality and the convenient asymptotic representations \eqref{x_asym_relation} and \eqref{H_asym_relation}. We now extend Theorem \ref{theorem.growth3} by showing that $H \in U \subset BG_{a,\lambda}$ if and only if $x \in U \subset BG_{a,\lambda}$, where the set $U$ is endowed with additional growth properties. 

We first address a type of ``periodic--growth'' which can be thought of as modeling the effect of economic cycles on the long term growth rate of an economy. The following definition of an almost periodic sequence is standard in the literature (see, for example, Agarwal \cite{agarwal2000difference}, or, for a more detailed exposition, Corduneanu \cite{corduneanu1989almost}).
\begin{definition}
A sequence $\pi = (\pi(n))_{n \in \mathbb{Z}}$ is almost periodic if for each $\epsilon>0$ there exists an integer $X(\epsilon)$ such that in any set of $X$ consecutive integers there exists an integer $N$ such that 
\[
|\pi(n+N) - \pi(n)| < \epsilon, \mbox{ for each }n \in \mathbb{Z}.
\] 
We write $\pi \in \text{AP}(\mathbb{Z})$ for short.
\end{definition}
The following definition of an asymptotically almost periodic sequence is also standard (see, for example, Henr{\'i}quez \cite{henriquez1991asymptotically} or Song \cite{song2006almost}).
\begin{definition}
A sequence $\pi = (\pi(n))_{n \geq 0}$ is asymptotically almost periodic if there exists  sequences $\psi \in \text{AP}(\mathbb{Z})$ and $(\phi(n))_{n \geq 0}$ obeying $\phi(n)\to 0$ as $n\to\infty$ such that $\pi(n) = \psi(n)+\phi(n)$ for each $n \geq 0$. We write $\pi \in \text{AAP}(\mathbb{Z}^+)$ for short.
\end{definition}
For $a \in G_\lambda$, define the space of sequences
\begin{equation}\label{BG_pi}
PG_{a,\lambda} = \left\{ (g(n))_{n \geq 0} : \left(g(n)/a(n)\right)_{n \geq 0} \in \text{AAP}(\mathbb{Z}^+)\right\}.
\end{equation}
We are now in a position to state the following result which characterises a type of asymptotic growth incorporating almost periodic cycles.
\begin{proposition}\label{prop.periodic}
If $k$ and the solution $r$ of \eqref{eq.r} are summable, and x is the solution to \eqref{eq.x}, then the following are equivalent:
\begin{itemize}
	\item[(a.)] $H\in PG_{a,\lambda}$;
	\item[(b.)] $x\in PG_{a,\lambda}$.
\end{itemize}
Moreover, when $H \in PG_{a,\lambda}$, the almost periodic part of $x/a$ is given by 
\begin{equation}\label{x_asym_periodic}
\pi_x(n) = \pi_H(n) + \sum_{j=1}^\infty r(j)\lambda^j \pi_H(n-j), \quad n \in \mathbb{Z},
\end{equation}
where $H/a \sim \pi_H \in AP(\mathbb{Z})$. Similarly, when $x \in PG_{a,\lambda}$, the almost periodic part of $H/a$ is given by 
\begin{equation}\label{H_asym_periodic}
\pi_H(n) = \pi_x(n) - \sum_{j=0}^{\infty} k(j)\lambda^{j+1} \pi_x(n-j-1), \quad n \in \mathbb{Z},
\end{equation}
where $x/a \sim \pi_x \in AP(\mathbb{Z})$.
\end{proposition}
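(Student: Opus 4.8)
The plan is to deduce Proposition~\ref{prop.periodic} from Theorem~\ref{theorem.growth3} by showing that the asymptotic representations \eqref{x_asym_relation} and \eqref{H_asym_relation} carry asymptotically almost periodic data to asymptotically almost periodic output. First I would record that $PG_{a,\lambda} \subset BG_{a,\lambda}$: any element of $\text{AP}(\mathbb{Z})$ is bounded and the remainder in an asymptotically almost periodic decomposition tends to zero, so $g/a \in \text{AAP}(\mathbb{Z}^+)$ forces $g/a$ to be bounded. Hence, for the implication (a.)~$\Rightarrow$~(b.), the hypothesis $H \in PG_{a,\lambda}$ gives $H \in BG_{a,\lambda}$, and Theorem~\ref{theorem.growth3} yields $x \in BG_{a,\lambda}$ together with the relation \eqref{x_asym_relation}. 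It then remains only to identify the almost periodic part of $x/a$ and to show it is given by \eqref{x_asym_periodic}.

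To this end I would take $(\lambda_a H)(n) = H(n)/a(n)$ and use the decomposition $(\lambda_a H)(n) = \pi_H(n) + \phi_H(n)$, where $\pi_H \in \text{AP}(\mathbb{Z})$ and $\phi_H(n) \to 0$ as $n\to\infty$. Substituting this into \eqref{x_asym_relation} and comparing with the candidate sequence $\pi_x$ in \eqref{x_asym_periodic}, the difference $x(n)/a(n) - \pi_x(n)$ is asymptotically equivalent to
\[
\phi_H(n) + \sum_{j=1}^{n} r(j)\lambda^j \phi_H(n-j) - \sum_{j=n+1}^{\infty} r(j)\lambda^j \pi_H(n-j).
\]
Each term tends to zero: the first by hypothesis; the second because the convolution of an $\ell^1$ sequence with a null sequence is null (split the sum at $n-N$, bounding the near part by $\epsilon \|r\|_{\ell^1}$ and the remaining finitely many terms by $\sup|\phi_H|\sum_{j\geq n-N}|r(j)| \to 0$); and the third because $|\pi_H|$ is bounded while $\sum_{j>n}|r(j)| \to 0$, since $r \in \ell^1(\mathbb{Z}^+)$. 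By transitivity of the relation in Definition~\ref{equiv_relation} this establishes $x/a \sim \pi_x$.

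The crux is to verify that $\pi_x$, as defined by \eqref{x_asym_periodic}, is genuinely almost periodic. Here I would invoke the standard structure theory of almost periodic sequences (see Corduneanu \cite{corduneanu1989almost}). Writing $c(j) = r(j)\lambda^j$, I note $c \in \ell^1(\mathbb{Z}^+)$ because $r \in \ell^1(\mathbb{Z}^+)$ and $|\lambda| \leq 1$. Each shift $n \mapsto \pi_H(n-j)$ lies in $\text{AP}(\mathbb{Z})$ with the same $\epsilon$-almost periods as $\pi_H$, so every partial sum $\pi_H(\cdot) + \sum_{j=1}^{N} c(j)\pi_H(\cdot-j)$ is a finite linear combination of almost periodic sequences and hence almost periodic. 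Since $\|\pi_H\|_{\infty} < \infty$, these partial sums converge to $\pi_x$ uniformly on $\mathbb{Z}$, at rate $\|\pi_H\|_{\infty}\sum_{j>N}|c(j)|$; as $\text{AP}(\mathbb{Z})$ is closed under uniform convergence, the limit $\pi_x$ is almost periodic. Consequently $x/a \in \text{AAP}(\mathbb{Z}^+)$, that is $x \in PG_{a,\lambda}$, and uniqueness of the almost periodic part of an asymptotically almost periodic sequence identifies it as \eqref{x_asym_periodic}.

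The reverse implication (b.)~$\Rightarrow$~(a.) and the formula \eqref{H_asym_periodic} follow by the symmetric argument, now starting from \eqref{H_asym_relation}, decomposing $(\lambda_a x)(n) = x(n)/a(n) = \pi_x(n) + \phi_x(n)$, and using that the kernel $k(j)\lambda^{j+1}$ is summable because $k \in \ell^1(\mathbb{Z}^+)$ and $|\lambda| \leq 1$. I expect the main obstacle to be the closure of $\text{AP}(\mathbb{Z})$ under convolution with an $\ell^1$ kernel; once that is in hand, everything else reduces to the two elementary facts that the $\ell^1$-convolution of a null sequence is null and that the tail of a summable sequence vanishes.
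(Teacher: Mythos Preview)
Your proposal is correct and follows essentially the same strategy as the paper: observe $PG_{a,\lambda}\subset BG_{a,\lambda}$, invoke Theorem~\ref{theorem.growth3}, extend the finite sum in \eqref{x_asym_relation} to the infinite sum \eqref{x_asym_periodic} by absorbing the tail into the null remainder, and then verify that this infinite convolution is almost periodic. The only notable difference is in this last step: the paper takes $\lambda_a H=\pi_H$ directly (so no $\phi_H$ convolution term appears) and proves $\pi_x\in\text{AP}(\mathbb{Z})$ via Bochner's normality criterion, whereas you take $\lambda_a H=H/a$, dispose of the extra $\phi_H$ convolution by a standard $\ell^1\ast c_0$ argument, and establish almost periodicity via closure of $\text{AP}(\mathbb{Z})$ under uniform limits of finite linear combinations of translates; both routes are standard and equally valid.
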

The result above is similar to the work of Dibl\'{i}k et al. \cite{diblik2011weighted} in which the authors prove sufficient conditions for the solution of a linear nonconvolution Volterra equation to have an asymptotically periodic solution, when scaled by an appropriate  weight sequence. Moreover, they show that the solution omits an asymptotic representation which identifies the periodic component. In the aforementioned work, and usually in the extant literature (see, for example, Gy\H{o}ri and Reynolds \cite{gyHori2010asymptotically} and the references therein), the solution essentially inherits asymptotic periodicity as a perturbation of an underlying non--delay equation. By contrast, in our result the periodicity of the weighted solution sequence is inherited purely  from the periodic behaviour of the exogenous forcing sequence. 

We also note that the result above holds true if the concept of almost periodicity is replaced simply by standard periodicity; in other words, if the almost periodic part of $H/a$ is periodic, then the almost periodic part of $x/a$ is periodic with the same period (and vice versa). 
\begin{example}\label{eq.periodic}
A simple example in which we can apply Proposition \ref{prop.periodic} is when the forcing term $H$ is endowed with exponential growth around a periodic (or almost periodic) trend. Let $H(n) = \pi^*(n)e^{\alpha n}$ for each $n \geq 0$, where $(\pi^*(n))_{n \in\mathbb{Z}} \in \text{AP}(\mathbb{Z})$ and $\alpha>0$. Naturally, we choose $(a(n))_{n \geq 0} = (e^{\alpha n})_{n \geq 0} \in G_{1/e}$, so that $(H(n)/a(n))_{n \geq 0}$ is bounded. By Proposition \ref{prop.periodic}, $(x(n)e^{-\alpha n})_{n \geq 0} \in \text{AAP}(\mathbb{Z}^+)$ and furthermore,
\[
x(n)e^{-\alpha n} \sim \sum_{j=0}^\infty r(j)e^{-j} \pi^*(n-j), \mbox{ as }n\to\infty.
\]
\end{example}
In the same spirit as the previous result, we consider the case when the forcing sequence $H$ in \eqref{eq.x} has a stable time average when appropriately scaled by a sequence in $G_\lambda$. Given a sequence $a \in G_\lambda$ and another sequence $g$, define the weighted average sequence $(\mu_a g(n))_{n \geq 1}$ by
\[
(\mu_a g)(n) = \frac{1}{n}\sum_{j = 1}^n \frac{g(j)}{a(j)}, \quad\text{for each } n \geq 1.
\]
Now, for $a \in G_\lambda$, define the space of sequences
\begin{equation*}
AG_{a,\lambda} = \left\{ (g(n))_{n \geq 0} : \frac{g(n)}{a(n)} \sim (\lambda_a g)(n) \mbox{ is bounded and}\lim_{n\to\infty} (\mu_a g)(n) \mbox{ exists} \right\}.
\end{equation*}
\begin{proposition}\label{prop.ergodic}
	If $k$ and the solution $r$ of \eqref{eq.r} are summable, and x is the solution to \eqref{eq.x}, then the following are equivalent:
	\begin{itemize}
		\item[(a.)] $H\in AG_{a,\lambda}$;
		\item[(b.)] $x\in AG_{a,\lambda}$.
	\end{itemize}
	Moreover, if $H \in AG_{a,\lambda}$ and $\lim_{n\to\infty}(\mu_a H)(n) =: {\mu_a H}^*$, then
	\[
	\lim_{n\to\infty}(\mu_a x)(n) = \frac{{\mu_a H}^*}{1 - \sum_{j = 0}^\infty. k(j)\lambda^{j+1}},
	\]
	Similarly, if $x \in AG_{a,\lambda}$ and $\lim_{n\to\infty}(\mu_a x)(n) =: {\mu_a x}^*$, then
	\[
	\lim_{n\to\infty}(\mu_a H)(n) = {\mu_a x}^*\left(1 - \sum_{j = 0}^\infty k(j)\lambda^{j+1}\right).
	\]
\end{proposition}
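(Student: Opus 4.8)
The plan is to exploit the variation of constants formula \eqref{eq.xrep} together with the asymptotic representation \eqref{x_asym_relation} already established in Theorem \ref{theorem.growth3}, reducing the statement about time averages to a computation involving Ces\`aro means of convolutions. The first observation is that $AG_{a,\lambda} \subset BG_{a,\lambda}$, so whenever $H \in AG_{a,\lambda}$ we automatically have $H \in BG_{a,\lambda}$, and Theorem \ref{theorem.growth3} applies to give $x \in BG_{a,\lambda}$ with $x(n)/a(n) \sim (\lambda_a H)(n) + \sum_{j=1}^n r(j)\lambda^j (\lambda_a H)(n-j)$. The remaining work is thus to show that the existence of $\lim_{n\to\infty}(\mu_a H)(n)$ forces the existence of $\lim_{n\to\infty}(\mu_a x)(n)$ (so that $x \in AG_{a,\lambda}$), and to identify the limiting value. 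Since $\mu_a x$ is, up to an asymptotically negligible correction, the Ces\`aro average of the right-hand side of \eqref{x_asym_relation}, I would average each of the two pieces separately.

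For the first piece, averaging $(\lambda_a H)(n)$ over $n$ gives exactly $(\mu_a H)(n)$ up to the asymptotic-equivalence error, whose Ces\`aro mean tends to zero (the tail of a sequence converging to $0$ has vanishing average). So this contributes ${\mu_a H}^*$ in the limit. For the second piece I would interchange the two summations:
\begin{equation*}
\frac{1}{n}\sum_{m=1}^n \sum_{j=1}^m r(j)\lambda^j (\lambda_a H)(m-j) = \sum_{j=1}^n r(j)\lambda^j \cdot \frac{1}{n}\sum_{m=j}^n (\lambda_a H)(m-j).
\end{equation*}
For each fixed $j$, the inner average $\frac{1}{n}\sum_{m=j}^n (\lambda_a H)(m-j)$ converges to ${\mu_a H}^*$ as $n\to\infty$, because it is a shifted Ces\`aro mean of the bounded sequence $(\lambda_a H)$ whose averages converge. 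Pulling the limit inside the sum over $j$ then yields $\sum_{j=1}^\infty r(j)\lambda^j \cdot {\mu_a H}^* = {\mu_a H}^*\bigl(\sum_{j=1}^\infty r(j)\lambda^j\bigr)$. Combining with the first piece gives $\lim_{n\to\infty}(\mu_a x)(n) = {\mu_a H}^*\bigl(1 + \sum_{j=1}^\infty r(j)\lambda^j\bigr)$, and the resolvent identity $1 + \sum_{j=1}^\infty r(j)\lambda^j = \bigl(1 - \sum_{j=0}^\infty k(j)\lambda^{j+1}\bigr)^{-1}$ (which follows from taking generating functions in \eqref{eq.r} and evaluating at $\lambda$, and is implicit in the constant $L$ of \eqref{eq.L}) delivers the stated formula. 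The converse direction, starting from $x \in AG_{a,\lambda}$ and using \eqref{H_asym_relation} instead, is entirely symmetric: the same averaging and summation-interchange argument applied to $H(n)/a(n) \sim (\lambda_a x)(n) - \sum_{j=0}^{n-1} k(j)\lambda^{j+1}(\lambda_a x)(n-j-1)$ produces $\lim_{n\to\infty}(\mu_a H)(n) = {\mu_a x}^*\bigl(1 - \sum_{j=0}^\infty k(j)\lambda^{j+1}\bigr)$, and this also establishes the equivalence (a.)$\Leftrightarrow$(b.) for the averaging property.

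The main obstacle I anticipate is the justification of passing the limit inside the infinite sum over $j$ when averaging the convolution term, i.e.\ the interchange of $\lim_{n\to\infty}$ and $\sum_{j=1}^\infty$. This is a dominated-convergence-style issue: for each fixed $j$ the inner averages converge, but I need uniform control to sum the limits. The key leverage is that $r \in \ell^1(\mathbb{Z}^+)$ and $\lambda \in [0,1]$, so the weights $r(j)\lambda^j$ are absolutely summable, while the inner averages are uniformly bounded by $\sup_n |(\lambda_a H)(n)| < \infty$ (since $H \in BG_{a,\lambda}$ guarantees boundedness of $\lambda_a H$). A careful splitting of the $j$-sum into a finite head (where termwise convergence applies) and a tail (controlled by $\sum_{j>N} |r(j)| \lambda^j$ times the uniform bound) makes the interchange rigorous; I would also need to verify that the discrepancy between $\frac{1}{n}\sum_{m=j}^n(\lambda_a H)(m-j)$ and $\mu_a H(n)$ vanishes uniformly enough in $j$, which again follows from boundedness of $\lambda_a H$ and convergence of its Ces\`aro means.
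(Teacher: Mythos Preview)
Your proposal is correct and follows essentially the same strategy as the paper: reduce to $BG_{a,\lambda}$ via Theorem~\ref{theorem.growth3}, take Ces\`aro averages of the asymptotic representation, interchange the order of summation in the resulting double sum, and pass to the limit using summability of the weights and boundedness of $\lambda_a H$ (respectively $\lambda_a x$). The only cosmetic difference is that the paper groups the double sum by the $(\lambda_a H)$-index, obtaining $\frac{1}{n}\sum_{k=1}^n \rho(n-k)(\lambda_a H)(k)$ with $\rho(m)=\sum_{i=0}^m r(i)\lambda^i \to \rho^*$, and then uses an $\epsilon$-splitting on $k$, whereas you group by the $r$-index and invoke a dominated-convergence argument on $\sum_j r(j)\lambda^j A_n(j)$; these are dual organizations of the same computation and your tail-splitting justification is entirely adequate.
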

The most natural context for the result above is when the forcing sequence $H$ is random and an ergodic theorem can be applied to conclude that $H$ has stable time averages; we consider an elementary example of this type and invite the interested reader to consider others of similar character.
\begin{example}
Suppose $(\Omega,\mathcal{B}(\mathbb{R}),\mathbb{P})$ is a probability space and $(h(n))_{n \geq 0}$ is a stationary sequence of random variables (not necessarily independent). In other words,
\[
\mathbb{P}[h(n_1) \in B_1,\dots,h(n_r) \in B_r] = \mathbb{P}[h(n_1+k) \in B_1,\dots,h(n_r+k) \in B_r]
\]
for any nonnegative integers $n_1 < n_2 < \ldots < n_r$, Borel sets $B_1,\ldots,B_r$, and positive integer $k$. Suppose further that the $h(n)$'s are bounded on the a.s. event $\Omega_1$. Let $H(n) = h(n)\alpha^n$ for each $n \geq 0$ and some $\alpha \in (1,\infty)$. Hence we may choose $(a(n))_{n\geq 0} = (\alpha^n)_{n \geq 0} \in G_{1/\alpha}$, so that $(H(n)/a(n))_{n \geq 0}$ is bounded on $\Omega_1$. By Birkhoff's ergodic theorem, $\lim_{n\to\infty}\tfrac{1}{n}\sum_{j=1}^n h(j) =: {\mu_a H}^*$ exists on an event of probability one, say $\Omega_2$. Applying Proposition \ref{prop.ergodic} on the a.s. event $\Omega := \Omega_1 \cap \Omega_2$ yields
\[
\lim_{n\to \infty}\frac{1}{n}\sum_{j=1}^n \alpha^{-j}x(j) = \frac{{\mu_a H}^*}{1 - \sum_{j = 0}^\infty k(j)\alpha^{-(j+1)}} \quad\mbox{a.s.}
\]
\end{example}
\section{Fluctuations and Time Averages}\label{sec.fluct.time}
We now explore fluctuations of the solutions of \eqref{eq.x}. 
Suppose that $(a(n))_{n \geq 0}$ is an increasing sequence with $a(n)\to\infty$ as $n\to\infty$, and for any sequence $(y(n))_{n\geq 0}$ define 
\begin{equation} \label{def.Lambdaya}
\Lambda_a|y|=\limsup_{n\to\infty} \frac{|y(n)|}{a(n)}. 
\end{equation}
Our first result illustrates the close coupling of the quantities $\Lambda_a |x|$ and $\Lambda_a |H|$ when $k$ and $r$ are summable. The authors previously obtained similar results for a nonlinear analogue of \eqref{eq.x} with summable kernel and sublinear nonlinearity \cite[Section 3.5]{appleby2016unbounded}.
\begin{theorem} \label{theorem.fluct}
Suppose $a$ is an increasing sequence with $a(n)\to\infty$ as $n\to\infty$. If $k$ and the solution $r$ of \eqref{eq.r} are summable, and x is the solution to \eqref{eq.x}, then
\begin{itemize}
\item[(a.)] $\Lambda_a |x|=0$ if and only if $\Lambda_a |H|=0$; 
\item[(b.)] $\Lambda_a |x|\in (0,\infty)$ if and only if $\Lambda_a |H|\in (0,\infty)$; 
\item[(c.)] $\Lambda_a |x|=+\infty$ if and only if $\Lambda_a |H|=+\infty$.
\end{itemize}
\end{theorem}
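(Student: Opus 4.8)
The plan is to reduce all three equivalences to a single pair of two-sided comparison inequalities relating $\Lambda_a|x|$ and $\Lambda_a|H|$, valid in the extended reals $[0,+\infty]$. The central tool is an elementary lemma on the $\limsup$ of a convolution against a summable kernel: if $(c(m))_{m\geq 0}\in\ell^1(\mathbb{Z}^+)$ (so in particular $c(m)\to 0$) and $(b(j))_{j\geq 0}$ is a nonnegative sequence with $\limsup_{j\to\infty}b(j)=\beta\in[0,+\infty]$, then $\limsup_{n\to\infty}\sum_{j=0}^n |c(n-j)|\,b(j)\leq \beta\sum_{m=0}^\infty|c(m)|$. I would prove this by fixing $\epsilon>0$, choosing $N$ with $b(j)\leq\beta+\epsilon$ for $j\geq N$, splitting the convolution at $N$, observing that the finite head $\sum_{j<N}|c(n-j)|b(j)\to 0$ as $n\to\infty$ (each $c(n-j)\to 0$ and there are finitely many terms), bounding the tail by $(\beta+\epsilon)\|c\|_{\ell^1}$, and finally letting $\epsilon\to 0$; the statement is read trivially when $\beta=+\infty$.

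For the forward comparison I would start from the variation of constants formula \eqref{eq.xrep}. Since $r\in\ell^1(\mathbb{Z}^+)$ forces $r(n)\to 0$ while $a(n)\to\infty$, the term $r(n)x(0)/a(n)\to 0$ and contributes nothing to the $\limsup$. For the convolution term, monotonicity of $a$ gives $a(j)\leq a(n)$ for $1\leq j\leq n$, whence
\[
\frac{1}{a(n)}\left|\sum_{j=1}^n r(n-j)H(j)\right|\leq\sum_{j=1}^n|r(n-j)|\,\frac{|H(j)|}{a(j)}.
\]
Applying the convolution lemma with $c=r$ and $b(j)=|H(j)|/a(j)$ yields $\Lambda_a|x|\leq\|r\|_{\ell^1}\,\Lambda_a|H|$.

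For the reverse comparison I would invert the equation: rearranging \eqref{eq.x} gives $H(m)=x(m)-\sum_{j=0}^{m-1}k(m-1-j)x(j)$ for $m\geq 1$. Dividing by $a(m)$, using $a(j)\leq a(m)$ on the sum, and applying the convolution lemma with $c=k$ and $b(j)=|x(j)|/a(j)$ produces $\Lambda_a|H|\leq(1+\|k\|_{\ell^1})\,\Lambda_a|x|$. With the two inequalities $\Lambda_a|x|\leq\|r\|_{\ell^1}\,\Lambda_a|H|$ and $\Lambda_a|H|\leq(1+\|k\|_{\ell^1})\,\Lambda_a|x|$ in hand, all three parts follow at once: the first inequality gives $\Lambda_a|H|=0\Rightarrow\Lambda_a|x|=0$ and $\Lambda_a|x|=+\infty\Rightarrow\Lambda_a|H|=+\infty$, the second gives the reverse implications, and part (b) follows by elimination, since a value in $(0,\infty)$ is precisely one that is neither $0$ nor $+\infty$.

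The algebraic rearrangement and the monotonicity estimate are routine; the only genuine care is needed in the convolution lemma, specifically in verifying that the finite head vanishes and in keeping the extended-real bookkeeping consistent so that the comparison inequalities remain meaningful when either side equals $0$ or $+\infty$. I expect that bookkeeping to be the main (though standard) obstacle.
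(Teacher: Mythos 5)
Your proposal is correct and follows essentially the same route as the paper: your convolution lemma is precisely the paper's Lemma~\ref{lemma.convdiva} (with the monotonicity bound $a(j)\leq a(n)$ applied before invoking the lemma rather than inside its proof), and the paper likewise derives the same two comparison inequalities $\Lambda_a|x|\leq |r|_1\,\Lambda_a|H|$ and $\Lambda_a|H|\leq (1+|k|_1)\,\Lambda_a|x|$ from the representation \eqref{eq.xrep} and from rearranging \eqref{eq.x}. The only difference is bookkeeping: the paper states its lemma and inequalities only when the relevant $\limsup$ is finite and therefore disposes of parts (b.) and (c.) by short contradiction arguments, whereas your extended-real formulation (using $r(0)=1$ so $|r|_1>0$, and $1+|k|_1>0$) makes all three parts follow directly from the two inequalities; both versions are valid.
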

In case (a.) of the result above we see that any fluctuations in $x$ can be no larger than the size of the fluctuations present in $H$, and vice versa. Similarly, if we observe fluctuations of a certain order of magnitude in either $x$ or $H$, fluctuations of the same order must have been present in the other sequence (case (b.)). Finally, fluctuations in the forcing sequence $H$ must lead to fluctuations at least as large in the solution sequence $x$, and vice versa. While still applicable in a deterministic setting, the result above is more natural and useful when the forcing sequence is random and we employ Theorem \ref{theorem.fluct} in this context in Section \ref{subsec.fluctuation}.

The next set of results provide bounds on time--averaged functionals of the solution to \eqref{eq.x}. The need for results of this type arises frequently in a variety of applications, particularly when $H$ is a stochastic process (see Appleby and Patterson \cite[Section 3.8]{appleby2016unbounded} for sample applications).  
\begin{theorem}\label{thm.phi.moments}
Suppose $k$ and the solution $r$ of \eqref{eq.r} are summable, and $x$ is the solution to \eqref{eq.x}. If $\phi:[0,\infty) \mapsto [0,\infty)$ is an increasing convex function, then
\[
\limsup_{n\to\infty}\frac{1}{n}\sum_{j=0}^n\phi(|x(j)|) \leq \limsup_{n\to\infty}\frac{1}{n}\sum_{j=0}^n\phi\left(|r|_1\,|H(j)|\right).
\]
Similarly, 
\[
\limsup_{n\to\infty}\frac{1}{n}\sum_{j=0}^n\phi(|H(j)|) \leq \limsup_{n\to\infty}\frac{1}{n}\sum_{j=0}^n\phi\left((1+|k|_1)|x(j)|\right).
\]
\end{theorem}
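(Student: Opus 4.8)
The two inequalities have the same structure, so the plan is to prove the first in full and to obtain the second by the identical method with the roles of $x$ and $H$ interchanged, using the defining equation \eqref{eq.x} in place of the variation of constants formula \eqref{eq.xrep}. For the first inequality I would start from \eqref{eq.xrep} and the triangle inequality to write, for $n \ge 1$,
\begin{equation*}
|x(n)| \le |r(n)|\,|x(0)| + \sum_{j=1}^n |r(n-j)|\,|H(j)| = \sum_{j=0}^n |r(n-j)|\,\tilde H(j),
\end{equation*}
where $\tilde H(0) := |x(0)|$ and $\tilde H(j):=|H(j)|$ for $j \ge 1$. The total coefficient mass obeys $\sum_{j=0}^n |r(n-j)| = \sum_{m=0}^n |r(m)| \le |r|_1$, which is finite precisely because $r$ is summable.

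The crux is a Jensen-type estimate. Setting $p_j := |r(n-j)|/|r|_1$ for $0 \le j \le n$ and introducing a slack mass $p_\ast := 1 - \sum_{j=0}^n p_j \ge 0$ placed at the point $0$, the vector $(p_0,\dots,p_n,p_\ast)$ is a probability vector. Since $\phi$ is increasing, the bound above gives $\phi(|x(n)|) \le \phi\big(\sum_{j=0}^n p_j\,(|r|_1\tilde H(j)) + p_\ast\cdot 0\big)$, and since $\phi$ is convex the finite Jensen inequality yields
\begin{equation*}
\phi(|x(n)|) \le \sum_{j=0}^n \frac{|r(n-j)|}{|r|_1}\,\phi\big(|r|_1\,\tilde H(j)\big) + p_\ast\,\phi(0).
\end{equation*}
Both hypotheses on $\phi$ are used here: monotonicity to pass the scalar bound inside $\phi$, and convexity to split the weighted average.

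Next I would average over $n=0,\dots,N$, interchange the order of summation (legitimate as all terms are nonnegative), and apply $\sum_{n=j}^N |r(n-j)| = \sum_{m=0}^{N-j}|r(m)| \le |r|_1$ to collapse the resolvent weights, arriving at
\begin{equation*}
\frac{1}{N}\sum_{n=0}^N \phi(|x(n)|) \le \frac{1}{N}\sum_{j=0}^N \phi\big(|r|_1\,\tilde H(j)\big) + \phi(0)\cdot\frac{1}{N}\sum_{n=0}^N p_\ast(n).
\end{equation*}
The $j=0$ contribution on the right is $N^{-1}\phi(|r|_1\,|x(0)|)$, which vanishes as $N\to\infty$, so the first sum has the same $\limsup$ as $N^{-1}\sum_{j=0}^N \phi(|r|_1\,|H(j)|)$; taking $\limsup$ of both sides then delivers the claim, provided the slack term disappears.

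The main obstacle is precisely controlling this slack (and the initial-data) contribution, since $\phi(0)$ need not be zero. The resolution exploits summability of $r$: because $\sum_{m=0}^n |r(m)| \to |r|_1$, we have $p_\ast(n) = 1 - |r|_1^{-1}\sum_{m=0}^n |r(m)| \to 0$, and the Cesàro (Toeplitz) lemma forces $N^{-1}\sum_{n=0}^N p_\ast(n)\to 0$, so the $\phi(0)$ term evaporates in the limit. For the second inequality I would rearrange \eqref{eq.x} into $H(n) = x(n) - \sum_{j=0}^{n-1} k(n-1-j)\,x(j)$, so that $|H(n)|$ is dominated by a nonnegative combination of the $|x(j)|$ with total coefficient mass $1 + \sum_{m=0}^{n-1}|k(m)| \le 1+|k|_1$; normalizing by $1+|k|_1$ and repeating the Jensen–average–interchange argument produces the stated bound, the slack again vanishing because $\sum_{m=0}^{n-1}|k(m)| \to |k|_1$.
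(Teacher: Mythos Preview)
Your proof is correct and follows essentially the same route as the paper: bound $|x(n)|$ via the variation of constants formula, apply Jensen's inequality with the normalized resolvent weights, sum over $n$, interchange the order of summation, and use $\sum_{m=0}^{N-j}|r(m)|\le|r|_1$ to collapse the inner sum. The only cosmetic difference is how the deficit in the probability weights is handled: the paper normalizes by the partial sum $\sum_{l=0}^n|r(l)|$ and then passes from $1/\sum_{l=0}^n|r(l)|$ to $1/((1-\epsilon)|r|_1)$ for $n$ large before letting $\epsilon\downarrow0$, whereas you normalize directly by $|r|_1$, park the missing mass $p_\ast(n)$ at the point $0$, and kill the resulting $\phi(0)$ contribution by Ces\`aro; your treatment of the initial datum via $\tilde H(0)=|x(0)|$ is likewise a minor bookkeeping variant of the paper's (which effectively sets $H(0)=x(0)$).
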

We remark that the non--unit multipliers inside the argument of $\phi$ in the result above are not entirely an artefact of the method of proof, although neither is our estimate sharp in general. We illustrate this point with a short example in which the sequence $H$ is random. As usual, we defer the proof to the end.
\begin{example}\label{eg.non_unit_multiplier}
Let $\sigma>0$ and suppose $H$ is a sequence of independent and identically distributed normal  random variables with mean zero and variance $\sigma^2$. Let $x(0)$ be deterministic and suppose  $r \in \ell^1(\mathbb{Z}^+)$. By the strong law of large numbers,
\[
\lim_{n\to\infty}\frac{1}{n}\sum_{j = 0}^n H^2(j) = \sigma^2 \mbox{ a.s.}
\]
Furthermore, if $\lim_{n\to\infty}\log(n)\sum_{j = n}^\infty r^2(j) = 0$, then it can be shown that
\[
\lim_{n\to\infty}\frac{1}{n}\sum_{j = 0}^n x^2(j) = \sigma^2 \sum_{j = 0}^\infty r^2(j) = \sigma^2 \left(|r|_2\right)^2\mbox{ a.s.},
\]
where $|r|_2$ denotes the $\ell^2$--norm of $r$. In the context of this example, applying Theorem \ref{thm.phi.moments} with $\phi(x)=x^2$ would enable us to conclude that
\[
\limsup_{n\to\infty}\frac{1}{n}\sum_{j = 0}^n x^2(j) \leq \left(|r|_1\right)^2\limsup_{n\to\infty}\frac{1}{n}\sum_{j = 0}^n H^2(j) = \sigma^2 \left(|r|_1\right)^2\mbox{ a.s.}
\]
Since $\left(|r|_2\right)^2 \leq \left(|r|_1\right)^2$ in general, usually with strict inequality, the aforementioned lack of sharpness in the conclusion of Theorem \ref{thm.phi.moments} is immediately apparent.
\end{example}

With a slight strengthening of hypotheses on $\phi$ we can immediately prove a very useful corollary regarding the finiteness of time averaged functionals of the solution to \eqref{eq.x}. We first require the following standard definition.
\begin{definition}
A nonnegative measurable function $\phi$ is called $O$--regularly varying if
\[
0 < \liminf_{x\to\infty}\frac{\phi(\lambda x)}{\phi(x)} \leq \limsup_{x\to\infty}\frac{\phi(\lambda x)}{\phi(x)} < \infty, \mbox{ for each }\lambda>1.
\]
\end{definition} 
While the definition above may seem somewhat restrictive, it turns out that if $\phi$ is increasing and $\limsup_{x\to\infty}\phi(\lambda x)/\phi(x)$ is finite for some $\lambda >1$, then $\phi$ is $O$--regularly varying (see Bingham et al. \cite[Corollary 2.0.6, p.65]{bingham1989regular}). We now state the aforementioned corollary to Theorem \ref{thm.phi.moments} without proof.
\begin{corollary}
Suppose $k$ and $r$ are summable, and x is the solution to \eqref{eq.x}. If $\phi:[0,\infty) \mapsto [0,\infty)$ is an increasing, convex, and $O$--regularly varying function, then the following are equivalent:
\begin{enumerate}
\item[(a.)] \[\limsup_{n\to\infty}\frac{1}{n}\sum_{j=0}^n\phi(|x(j)|) < \infty,\]
\item[(b.)] \[\limsup_{n\to\infty}\frac{1}{n}\sum_{j=0}^n\phi(|H(j)|) < \infty.\]
\end{enumerate}
\end{corollary}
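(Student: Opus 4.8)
The plan is to derive this corollary directly from Theorem \ref{thm.phi.moments}, using $O$--regular variation only to neutralise the non--unit multipliers $|r|_1$ and $1+|k|_1$ that appear inside the argument of $\phi$ in the two estimates of that theorem. The central observation I would establish first is the following absorption property: if $\phi$ is increasing and $O$--regularly varying, then for every constant $c>0$ there exist constants $C\geq 0$ and $M\geq 0$ such that
\[
\phi(cx) \leq C\,\phi(x) + M, \quad \text{for all } x\geq 0.
\]
When $c\leq 1$ this is immediate from monotonicity, taking $C=1$ and $M=0$. When $c>1$, $O$--regular variation furnishes $\limsup_{x\to\infty}\phi(cx)/\phi(x)=:C_c<\infty$, so for any $\epsilon>0$ there is an $x_0$ with $\phi(cx)\leq (C_c+\epsilon)\phi(x)$ for $x\geq x_0$; for $0\leq x<x_0$ monotonicity gives $\phi(cx)\leq \phi(cx_0)$, and choosing $C=C_c+\epsilon$ and $M=\phi(cx_0)$ yields the claim for all $x\geq 0$.

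With this absorption property in hand, both implications follow by arithmetic averaging. To prove (b.)\ implies (a.), I would apply the first inequality of Theorem \ref{thm.phi.moments} and then the absorption property with $c=|r|_1$: summing $\phi(|r|_1|H(j)|)\leq C\phi(|H(j)|)+M$ over $j$ and dividing by $n$ gives
\[
\frac{1}{n}\sum_{j=0}^n \phi(|r|_1\,|H(j)|) \leq C\cdot\frac{1}{n}\sum_{j=0}^n \phi(|H(j)|) + \frac{(n+1)M}{n}.
\]
Letting $n\to\infty$, the additive term tends to the finite constant $M$, so finiteness of the average of $\phi(|H(j)|)$ forces finiteness of the right--hand side and hence, via Theorem \ref{thm.phi.moments}, of the average of $\phi(|x(j)|)$. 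The reverse implication (a.)\ implies (b.)\ is entirely symmetric: one applies the second inequality of Theorem \ref{thm.phi.moments} together with the absorption property for the constant $c=1+|k|_1>1$, and again the additive error contributes only a finite constant to the relevant $\limsup$.

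The main technical point, and the reason the hypothesis of $O$--regular variation is imposed rather than mere convexity, is precisely this absorption step: it converts the multiplicative distortions $|r|_1$ and $1+|k|_1$ inherited from Theorem \ref{thm.phi.moments} into an additive error that is harmless under averaging, since an additive constant divided into a Ces\`{a}ro mean contributes only a bounded quantity to the limit superior. I expect no further obstacle; convexity is needed only to invoke Theorem \ref{thm.phi.moments} in the first place, and the only mild subtlety is that the absorption inequality must be stated globally (for all $x\geq 0$) rather than just asymptotically, which is why the case split on whether the multiplier exceeds $1$ and the boundary correction $M=\phi(cx_0)$ are included above.
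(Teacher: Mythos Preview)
Your proof is correct, and in fact the paper states this corollary \emph{without proof}, so there is nothing to compare against directly. That said, your argument is precisely the one the paper's framing invites: it explicitly calls the result a corollary of Theorem~\ref{thm.phi.moments} and introduces the $O$--regular variation hypothesis immediately beforehand for exactly the purpose you identify, namely to absorb the non--unit multipliers $|r|_1$ and $1+|k|_1$ appearing inside $\phi$ in the two estimates of that theorem. Your global absorption inequality $\phi(cx)\leq C\phi(x)+M$ is the right device, and your case split (monotonicity for $c\leq 1$, the $\limsup$ bound from $O$--regular variation plus a boundary correction for $c>1$) is clean and complete; the only constants that actually arise are $|r|_1\geq 1$ (since $r(0)=1$) and $1+|k|_1\geq 1$, so in practice only the $c\geq 1$ branch is exercised.
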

\section{Examples \& Applications to Stochastic Volterra Equations}\label{sec.examples}
\subsection{Growth}\label{subsec.growth}
The space $G_\lambda$ contains many well--behaved sequences covering a wide range of growth, from very slow to very rapid. To see this, it is useful to introduce notation for iterated logarithms and exponentials. For any positive integer $k$, we define inductively the iterated logarithm $\log_k(x)=\log(\log_{k-1}(x))$ for $k\geq 2$ and 
$\log_1(x)=\log(x)$ (for appropriate positive $x$), and the iterated exponential $\exp_k(x)=\exp(\exp_{k-1}(x))$ for $k\geq 2$ and 
$\exp_1(x)=\exp(x)$ for all $x>0$. 

For examples in $G_1$ consider a sequence asymptotic to $H_1(n)$ where 
\[
H_1(n)=\prod_{i=1}^j (\log_{i}(n))^{\beta_i}
\]
where $j$ is a positive integer, and $(\beta_i)_{i=1}^j$ are any real numbers such that the first non--zero entry in the sequence $\beta$ is positive. We also can take a sequence asymptotic to $H_1(n)$, where $\theta>0$
\[
H_2(n)=n^{\theta_1} H_1(n) 
\]
and there is no restriction on the sequence $\beta$ in $H_1$. $H_3(n)=n^{\theta_1}$ for $\theta_1>0$ is another example in $G_1$. Sequences which grow faster than positive powers of $n$, but slower than exponentially are also admissible. For instance, sequences asymptotic to 
\[
H_4(n)=\exp(\alpha n^{\theta_2})
\] 
for $\alpha>0$ and $\theta_2\in (0,1)$, or even asymptotic to $H_5(n)=H_4(n)H_2(n)$ (without restriction on $\theta_1$ or $\beta$ in $H_2$), are also in  $G_1$. For examples of sequences in $G_\lambda$ for $\lambda\in (0,1)$, we have sequences which grow geometrically or are dominated by 
a geometric growth component. Such sequences include those asymptotic to $H_6(n)=\lambda^{-n}$ or 
\[
H_7(n)=H_6(n) H_4(n)H_2(n)
\]
where there is no restriction on $\theta_1$ or $\beta$ in $H_2$, nor on $\theta_1$ in $H_4$. Finally it can be seen that $G_0$ contains many sequences which grow faster than any geometric sequence. Examples include
\[
H_8(n)=H_4(n)
\]
where $\alpha>0$ and $\theta_2>1$, $H_{9}(n) = n!$, and $H_{10}(n)=\exp_j(n)$ for any integer $j\geq 2$. 
\begin{example} \label{examp.SRWdrift}
The sequences considered so far are deterministic and growing, and within the class of growing stochastic processes, some sequences reside in $G_\lambda$ while others do not. Consider for instance the random walk with drift 
\[
H(n)=\mu n + \sum_{j=1}^n Y(j), \quad n\geq 1
\] 
where $\mu\neq 0$ and the $Y$'s are independent and identically distributed random variables with $\mathbb{E}[Y(j)]=0$ and 
$\mathbb{E}[|Y(j)|]=:\mu_1<+\infty$. We further assume that  
	\begin{equation} \label{eq.DelHnondegen}
	\mathbb{P}[Y(n)=c]<1 \text{ for all $c\in \mathbb{R}$}, 
	\end{equation}
	so that the $Y$'s are meaningfully random, and are not almost surely constant.
	By the strong law of large numbers, 
	\[
	\lim_{n\to\infty} \frac{H(n)}{n}=\mu\neq 0, \quad \text{a.s.}
	\] 
so $|H(n)|$ is asymptotic to the sequence $|\mu|n$ which is increasing, and $H$ is clearly in $G_1$, despite the fact that the sequence $|H|$ is not monotone increasing or even ultimately monotone. To see this, take without loss $\mu>0$ and suppose that 
	$\mathbb{P}[Y(j)<-\mu]>0$. This will certainly be true if each $Y$ has a distribution supported on all $\mathbb{R}$. Therefore 
	\[
	\mathbb{P}[H(n+1)-H(n)<0]=\mathbb{P}[\mu+Y(n+1)<0]>0,
	\]   
so at each step there is a constant and positive probability that $H$ decreases. Moreover, as the events  $\{Y(j)<-\mu\}$ are independent, it is true that 
	\[
	\mathbb{P}[H(n+1)<H(n) \text{ for some $n>m$}]=1, \mbox{ for each $m\in \mathbb{N}$},
	\]
so not only is $H$ non--monotone almost surely, it is also ultimately non--monotone almost surely.
\end{example}
	
\begin{example} \label{examp.GRW}
An example of a sequence that is not in $G_\lambda$ (in general) is the geometric random walk. Suppose $\mu>0$ above and consider 
the sequence
\begin{equation} \label{eq.GRW}
H(n)= \exp\left(\mu n + \sum_{j=1}^n Y(j)\right), \quad n\geq 1
\end{equation}
where the process $Y$ is as in Example \ref{examp.SRWdrift}. Clearly,
\[
\frac{H(n-1)}{H(n)}=  e^{-\mu} e^{- Y(n)},
\]
so $\lim_{n\to\infty} H(n-1)/H(n)$ exists if and only if $Y(n)$ tends to a finite limit. But as the sequence $Y$ consists of independent and identically distributed random variables, which obey the non--degeneracy condition \eqref{eq.DelHnondegen}, this is impossible. Hence, the growing geometric random walk is not in $G_\lambda$, for any $\lambda\in [0,1]$. 
\end{example}
To determine the asymptotic behaviour of $x$ for less regular forcing sequences requires further assumptions on the data, and weaker conclusions. Here is an example of the type of result that can be established: we work with the geometric random walk from \eqref{eq.GRW} for definiteness.
\begin{theorem} \label{thm.GRW}
Suppose that $\mu>0$ and that $H$ is the geometric random walk given by \eqref{eq.GRW}. Let $x$ denote the solution of \eqref{eq.x} and suppose $x(0)>0$. If $k$ is non--negative, with $\sum_{j=0}^\infty k(j)<1$, then $x(n)>0$ for all $n\geq 0$ a.s., $x(n)\to\infty$ as $n\to\infty$ a.s. and 
\[
\lim_{n\to\infty} \frac{1}{n} \log x(n)=\mu, \quad\text{a.s.}
\] 
\end{theorem}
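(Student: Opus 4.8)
The plan is to establish the three assertions in turn, obtaining positivity and divergence cheaply from the recursion and then squeezing $\frac{1}{n}\log x(n)$ between matching lower and upper bounds. Throughout I write $S(n) = \sum_{j=1}^n Y(j)$, so that $H(n) = \exp(\mu n + S(n))$, and I record at the outset the single probabilistic input I need: by the strong law of large numbers, $S(n)/n \to 0$ a.s.

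First I would handle positivity and divergence together. An immediate induction on \eqref{eq.x} gives $x(n)>0$ a.s.: since $x(0)>0$, $k\geq 0$, and $H(n+1)>0$, positivity of $x(0),\dots,x(n)$ forces $x(n+1)=\sum_{j=0}^n k(n-j)x(j)+H(n+1)>0$. The same observation shows the convolution term is nonnegative, whence $x(n)\geq H(n)$ for $n\geq 1$. Therefore
\[
\frac{1}{n}\log x(n) \geq \frac{1}{n}\log H(n) = \mu + \frac{S(n)}{n} \to \mu \quad \text{a.s.},
\]
which simultaneously yields $x(n)\to\infty$ a.s. and the lower bound $\liminf_{n\to\infty}\frac{1}{n}\log x(n)\geq \mu$ a.s.

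For the matching upper bound I would invoke the variation of constants formula \eqref{eq.xrep}. Because $k\geq 0$, an induction on \eqref{eq.r} shows $r(n)\geq 0$ for all $n$, while the hypothesis $\sum_{j=0}^\infty k(j)<1$ guarantees (as noted after \eqref{eq.chareqn}) that $r\in\ell^1(\mathbb{Z}^+)$. Reindexing \eqref{eq.xrep} by $m=n-j$ gives
\[
x(n) = r(n)\,x(0) + e^{\mu n}\sum_{m=0}^{n-1} r(m)\,e^{-\mu m}\,e^{S(n-m)}.
\]
The key device is to control the partial sums uniformly: setting $M(n)=\max_{1\leq l\leq n}|S(l)|$, the convergence $S(l)/l\to 0$ upgrades to $M(n)/n\to 0$ a.s. Using $e^{S(n-m)}\leq e^{M(n)}$, $e^{-\mu m}\leq 1$, and $r\geq 0$, I obtain
\[
x(n) \leq r(n)\,x(0) + e^{\mu n + M(n)}\sum_{m=0}^{\infty} r(m)\,e^{-\mu m} \leq r(n)\,x(0) + |r|_1\,e^{\mu n + M(n)}.
\]
Since $r\in\ell^1$ forces $r(n)\to 0$, the first term stays bounded and the exponential dominates; taking logarithms, dividing by $n$, and using $M(n)/n\to 0$ yields $\limsup_{n\to\infty}\frac{1}{n}\log x(n)\leq \mu$ a.s. Together with the lower bound this proves the limit equals $\mu$.

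The main obstacle, modest as it is, is precisely the upgrade from the pointwise law of large numbers $S(n)/n\to 0$ to the uniform statement $M(n)/n\to 0$ a.s.; this is what permits the convolution against the summable resolvent to be absorbed into a subexponential factor. It rests on a standard maximal-type argument (given $\epsilon>0$, a.s.\ $|S(l)|\leq \epsilon l$ for all large $l$, so $\max_{1\leq l\leq n}|S(l)|\leq \max(C(\omega),\epsilon n)$), but it is the one genuinely stochastic ingredient beyond the deterministic resolvent estimates, and some care is needed to intersect the exceptional null sets arising in positivity, in $S(n)/n\to 0$, and in $M(n)/n\to 0$ onto a common almost sure event.
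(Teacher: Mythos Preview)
Your argument is correct. The lower bound and positivity are obtained exactly as in the paper. For the upper bound, however, you take a genuinely different route: the paper dominates $H$ pathwise by the deterministic sequence $H_\epsilon(n)=e^{(\mu+\epsilon)n}+x^\ast(\epsilon)$, builds a comparison solution $x_\epsilon$ to which Theorem~\ref{theorem.growth2} applies (since $H_\epsilon\in G_{e^{-(\mu+\epsilon)}}$), obtains $\limsup_n n^{-1}\log x(n)\leq \mu+\epsilon$, and sends $\epsilon\downarrow 0$. You instead work directly with the variation of constants formula \eqref{eq.xrep}, use $r\geq 0$ and $r\in\ell^1$ to bound $x(n)\leq r(n)x(0)+|r|_1\,e^{\mu n+M(n)}$, and control the stochastic factor via the maximal statement $M(n)/n\to 0$. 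Your approach is more self-contained and avoids the $\epsilon$-comparison and the appeal to Theorem~\ref{theorem.growth2}; the paper's approach, on the other hand, showcases how the general $G_\lambda$ machinery feeds back into the stochastic example, and its comparison step does not require the explicit nonnegativity of $r$ (only $k\geq 0$ for the inequality $x\leq x_\epsilon$).
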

\begin{proof}
Consider the sequence $(\log H(n))_{n \geq 0}$ and apply the strong law of large numbers to obtain
\[
\lim_{n\to\infty} \frac{1}{n}\log H(n)=\mu, \quad \text{a.s.}
\]
Suppose that $\Omega^\ast$ is the almost sure event on which this limit prevails. Since $k$ is non--negative, we have that $r$ is summable because  $\sum_{j=0}^\infty k(j)<1$. Clearly we have that $x(n)> 0$ for all $n\geq 0$ a.s. and furthermore that $x(n)>H(n)$ for all $n\geq 1$ a.s. Hence 
\[
\liminf_{n\to\infty} \frac{1}{n}\log x(n)\geq \mu, \quad\text{a.s.}
\]
On the other hand, for every $\omega\in \Omega^\ast$ and $\epsilon>0$, $H(n)<e^{(\mu+\epsilon)n}=:h_\epsilon(n)$ for all $n\geq N(\epsilon,\omega)$. Then we have
\[
x(n+1)<h_\epsilon(n+1)+\sum_{j=0}^n k(n-j)x(j), \quad n\geq N(\epsilon,\omega)+1.
\]
Define $x^\ast(\epsilon)=\max_{0\leq j\leq N(\epsilon,\omega)+1} x(j)$, so that
\[
x(n+1)<x^\ast(\epsilon), \quad n=0,\ldots,N(\epsilon,\omega).
\] 
Hence, with $H_\epsilon(n):=e^{(\mu+\epsilon)n}+x^\ast(\epsilon)$ for all $n\geq 0$, we have the inequality
\[
x(n+1)<H_\epsilon(n+1)+\sum_{j=0}^n k(n-j)x(j), \quad n\geq 0; \quad x(0)=\xi>0.
\]
Now, consider the solution of the summation equation
\[
x_\epsilon(n+1)=H_\epsilon(n+1)+\sum_{j=0}^n k(n-j)x_\epsilon(j), \quad n\geq 0; \quad x_\epsilon(0)=x(0)+1.
\]
By construction, $x(n)<x_\epsilon(n)$ for all $n\geq 0$. Moreover, $x_\epsilon$ omits the representation 
\[
x_\epsilon(n)=r(n)[\xi+1] + \sum_{j=1}^n r(n-j)H_\epsilon(j), \quad n\geq 0,
\]
from \eqref{eq.xrep}. Notice now that $H_\epsilon$ is in $G_\lambda$ for $\lambda=e^{-(\mu+\epsilon)}$. By Theorem~\ref{theorem.growth2}, 
\[
\lim_{n\to\infty} \frac{x_\epsilon(n)}{e^{(\mu+\epsilon)n}} = \frac{1}{1-\sum_{j=0}^\infty k(j)e^{-(\mu+\epsilon)(j+1)}}=:L(\epsilon).
\] 
Hence, for each $\omega\in \Omega^\ast$ and $\epsilon>0$, we have 
\[
\limsup_{n\to\infty} \frac{x(n,\omega)}{e^{(\mu+\epsilon)n}}\leq L(\epsilon).
\]
Therefore
\[
\limsup_{n\to\infty} \frac{1}{n}\log x(n,\omega)\leq \mu+\epsilon, \quad \mbox{for each }\omega\in \Omega^\ast. 
\]
Finally, letting $\epsilon\to 0^+$ in the equation above and combining with the limit inferior yields
\[
\lim_{n\to\infty} \frac{1}{n}\log x(n) =\mu, \quad \text{a.s.}
\]
\end{proof}
\subsection{Fluctuation}\label{subsec.fluctuation}
We first sketch a general framework for dealing with forcing sequences comprised of independent and identically distributed (i.i.d.) random variables and then demonstrate how Theorem \ref{theorem.fluct} can be applied in the presence of such stochastic perturbations. 

Suppose $H$ is a sequence of i.i.d. random variables with common distribution function $F$. For ease of exposition assume the distribution is continuous and supported on all of $\mathbb{R}$.

Since each random variable $H(n)$ has distribution function $F$ we have  
\[
\mathbb{P}[|H(n)|>Ka(n)]=1-F(Ka(n))+F(-Ka(n)).
\]
For each $K \in (0,\infty)$ and sequence $(a(n))_{n \geq 0}$, define 
\begin{equation}\label{S_defn}
S(a,K)=\sum_{n=0}^\infty \{1-F(Ka(n))+F(-Ka(n))  \}.
\end{equation}
Since the events $\{|H(n)|>Ka(n)\}$ are independent, the Borel--Cantelli Lemma implies that 
\[
\mathbb{P}[|H(n)|>Ka(n) \text{ i.o.}]
=\left\{\begin{array}{cc}
0, & \text{if $S(a,K)<+\infty$}, \\
1, & \text{if $S(a,K)=+\infty$}. 
\end{array}
\right.
\]
Therefore, for each $K>0$ such that $S(a,K)<+\infty$, there is an a.s. event $\Omega_K^+$ such that
\[
\Lambda_a|H|=\limsup_{n\to\infty} \frac{|H(n)|}{a(n)} \leq K, \quad\text{on $\Omega_K^+$}.
\]
Similarly, for each $K>0$ such that  $S(a,K)=+\infty$ we have that there is an a.s. event $\Omega_K^-$ such that
\[
\Lambda_a|H|=
\limsup_{n\to\infty} \frac{|H(n)|}{a(n)} \geq K, \quad\text{on $\Omega_K^-$}.
\]
It is sometimes possible, for a carefully--chosen sequence $a$ and number $K$, to produce a sequence $Ka(n)$ for which $S(a,K)$ is either finite or infinite. This will generate upper and lower bounds on the growth of the sequence $(|H(n)|)_{n \geq 0}$, and thereby (via Theorem~\ref{theorem.fluct}) allow conclusions about the growth of the fluctuations of $x$ to be deduced.

We first present a ubiquitous example in which one can find a sequence $a$ for which $\Lambda_a|H|\in (0,\infty)$; we subsequently outline a result which generalises the conclusion of this example to a class of distributions with appropriately fast decay in the tails. 
\begin{example} \label{examp.anormal}
Suppose that $H(n)$ is a sequence of independent normal random variables with mean zero and variance $\sigma^2>0$. If $a(n)=\sqrt{2 \log n}$, then it is well--known that, for each $\epsilon\in (0,\sigma)$, we have  
\[
S(a,\sigma+\epsilon)<+\infty, \quad S(a,\sigma-\epsilon)=+\infty.
\]
Therefore, there are a.s. events $\Omega_\epsilon^{\pm}$ such that 
\[
\limsup_{n\to\infty} \frac{|H(n)|}{\sqrt{2\log n}} \geq \sigma-\epsilon, \quad\text{a.s. on $\Omega_\epsilon^-$\,\,\, and},\quad \limsup_{n\to\infty} \frac{|H(n)|}{\sqrt{2\log n}} \leq \sigma+\epsilon, \quad \text{a.s. on $\Omega_\epsilon^+$}. 
\]
Now consider $
\Omega^\ast=\{\cap_{\epsilon\in \mathbb{Q}\cap(0,\sigma)} \Omega_\epsilon^+\} \cap \{\cap_{\epsilon\in \mathbb{Q}\cap(0,\sigma)} \Omega_\epsilon^-\}$. By construction, $\Omega^\ast$ is an almost sure event and
\begin{align}\label{BC_exact}
\limsup_{n\to\infty} \frac{|H(n)|}{\sqrt{2\log n}} = \sigma, \quad \text{on $\Omega^\ast$}. 
\end{align}
Hence we can apply Theorem~\ref{theorem.fluct} to \eqref{eq.x} with $a(n)=\sqrt{2\log n}$ to obtain
\[
0<\limsup_{n\to\infty} \frac{|x(n)|}{\sqrt{2\log n}} <+\infty, \quad \text{a.s.}
\]
In fact, scrutiny of the proof of Theorem~\ref{theorem.fluct} shows that there are deterministic constants $K_1$ and $K_2$ which depend on 
$K$ (but not on $\sigma$) such that
\[
K_1\sigma\leq \limsup_{n\to\infty} \frac{|x(n)|}{\sqrt{2\log n}} \leq K_2\sigma, \quad \text{a.s.}
\] 
\end{example}
In fact, we can prove a more general result in the case when $(H(n))_{n \geq 0}$ is a sequence of i.i.d. random variables with appropriately ``thin tails''. We first define the class of super--slowly varying functions as follows:
\begin{definition}
A measurable function $\ell$ is called $\xi$-super--slowly varying (at infinity) if the limit
\[
\lim_{x\to\infty}\frac{\ell(x\xi^\delta(x))}{\ell(x)} = 1 
\]
holds uniformly for some $\Delta > 0$ and each $\delta \in [0,\Delta]$. We sometimes write $\ell \in \xi$--SSV for short.
\end{definition}
The class of super--slowly varying functions arises naturally in extreme value theory and, in particular, in the context of inverting asymptotic relations involving slowly varying functions (see Anderson \cite{anderson1978super}, and Bojanic and Seneta \cite{bojanic1971slowly}). The cited works, as well as the classic volume of Bingham et al. \cite[Ch. 3]{bingham1989regular}, give various convenient sufficient conditions for the definition above to hold; in many situations these conditions are preferable to verifying the definition directly. For example, if $\ell$ is slowly varying and $\xi$ is non--decreasing with
\begin{equation}\label{SSV_sufficient}
\frac{\ell(\lambda x)}{\ell(x)} = 1 + o(1/\log\xi(x)), \mbox{ as }x \to \infty, \mbox{ for some }\lambda>1,
\end{equation}
then $\ell$ is $\xi$-super--slowly varying. Similarly, if $\ell$ is continuously differentiable, the condition \eqref{SSV_sufficient} can be replaced by
\[
\frac{x \ell'(x)}{\ell(x)} \mbox{ is } o(1/\log\xi(x)) \mbox{ as }x\to\infty.
\]
We employ super--slow variation here as a convenient way to capture rapid decay in the tails of the distribution function which is sufficiently general that it includes most commonly used thin tailed distributions.
\begin{theorem}\label{rapid_tails_theorem}
Let $(H(n))_{n \geq 0}$ be a sequence of i.i.d. random variables supported on $\mathbb{R}$ with  continuous symmetric distribution function $F$. Define $G(x)=1-F(x)$ for each $x \in \mathbb{R}$. If $G^{-1}(1/x)$ is $\mu$-super--slowly varying as $x\to\infty$ with $\mu:(0,\infty)\mapsto (1,\infty)$ non--decreasing and obeying 
\begin{equation}\label{mu_sum_finite}
\sum_{n=1}^\infty \frac{1}{n \mu^{\delta^*}(n)} < \infty, \mbox{ for some }\delta^* \in (0,\Delta],
\end{equation}
then 
\[
 \limsup_{n\to\infty}\frac{|H(n)|}{G^{-1}\left(1/n\right)} = 1 \quad \mbox{a.s.}
\]
\end{theorem}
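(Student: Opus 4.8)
The plan is to realise the claimed almost sure limit as the common value of matching upper and lower bounds, each obtained from one half of the Borel--Cantelli lemma applied to the independent events $\{|H(n)| > K\,a(n)\}$, where I set $a(n) := G^{-1}(1/n)$. Since $F$ is continuous, symmetric and supported on all of $\mathbb{R}$, the tail $G = 1-F$ is continuous and strictly decreasing from $1$ to $0$, so $G^{-1}$ is a well-defined, continuous, strictly decreasing bijection, $a(n)$ is well defined with $a(n)\to\infty$, and symmetry gives $\mathbb{P}[|H(n)|>x] = 2G(x)$ for $x \ge 0$. The goal is then to prove, for each fixed $\epsilon \in (0,1)$, that $\sum_n \mathbb{P}[|H(n)| > (1+\epsilon)a(n)] < \infty$ while $\sum_n \mathbb{P}[|H(n)| > (1-\epsilon)a(n)] = \infty$, and to intersect the two resulting almost sure events over rational $\epsilon$, exactly as in Example~\ref{examp.anormal}.

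The lower bound is immediate and uses nothing beyond monotonicity: since $(1-\epsilon)a(n) < a(n)$ and $G$ is strictly decreasing, $G((1-\epsilon)a(n)) > G(a(n)) = 1/n$, so $\sum_n \mathbb{P}[|H(n)|>(1-\epsilon)a(n)] \ge \sum_n 2/n = \infty$. As the events are independent, the divergence half of Borel--Cantelli forces $|H(n)| > (1-\epsilon)a(n)$ infinitely often, whence $\limsup_{n\to\infty} |H(n)|/a(n) \ge 1-\epsilon$ almost surely.

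The upper bound is the crux, and this is where the super--slow variation hypothesis is needed. Writing $\ell(x) := G^{-1}(1/x)$, which is precisely the function assumed to be $\mu$--super--slowly varying, I would use that $\ell(x\mu^{\delta^*}(x))/\ell(x) \to 1$ as $x\to\infty$ (for the fixed $\delta^* \in (0,\Delta]$ furnished by \eqref{mu_sum_finite}) to obtain, for each $\epsilon>0$, an $N$ with $\ell(n\mu^{\delta^*}(n)) < (1+\epsilon)\ell(n) = (1+\epsilon)a(n)$ for all $n \ge N$. Since $\mu > 1$ gives $n\mu^{\delta^*}(n) > n \ge 1$, the argument $1/(n\mu^{\delta^*}(n))$ lies in $(0,1)$, so $\ell(n\mu^{\delta^*}(n)) = G^{-1}(1/(n\mu^{\delta^*}(n)))$ is the point at which $G$ equals $1/(n\mu^{\delta^*}(n))$. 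Applying the strictly decreasing map $G$ to the inequality $(1+\epsilon)a(n) > G^{-1}(1/(n\mu^{\delta^*}(n)))$ then yields $G((1+\epsilon)a(n)) < 1/(n\mu^{\delta^*}(n))$, so that $\sum_{n \ge N}\mathbb{P}[|H(n)|>(1+\epsilon)a(n)] \le 2\sum_{n\ge N} 1/(n\mu^{\delta^*}(n))$, which is finite by \eqref{mu_sum_finite}. The convergence half of Borel--Cantelli then delivers $\limsup_{n\to\infty}|H(n)|/a(n) \le 1+\epsilon$ almost surely.

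Intersecting the two families of almost sure events over rational $\epsilon \in (0,1)$ produces a single almost sure event on which $\limsup_{n\to\infty}|H(n)|/G^{-1}(1/n) = 1$, as claimed. The only genuinely delicate point is the manipulation in the third paragraph: converting the multiplicative perturbation $(1+\epsilon)$ of $a(n)$ into the summable comparison tail $1/(n\mu^{\delta^*}(n))$ through the definition of super--slow variation, and verifying that $G$ and $G^{-1}$ invert one another unambiguously on the relevant range, which is exactly what the continuity, symmetry and full--support assumptions on $F$ secure.
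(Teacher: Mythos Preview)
Your argument is correct and rests on the same Borel--Cantelli dichotomy and the same use of the super--slow variation hypothesis as the paper. The one presentational difference is in how the upper and lower bounds are packaged: the paper fixes $K=1$ throughout and instead varies the \emph{sequence}, taking $a_1(n)=G^{-1}(1/n)$ for the divergent sum and $a_2(n)=G^{-1}(1/(n\mu^{\delta^*}(n)))$ for the convergent one, then simply invokes $a_2(n)/a_1(n)\to 1$ (which is exactly the super--slow variation statement) to transfer the bound $\limsup|H(n)|/a_2(n)\le 1$ back to $a_1$. You instead fix the single sequence $a(n)=G^{-1}(1/n)$ and vary the constant $K=1\pm\epsilon$, which obliges you to take a countable intersection over rational $\epsilon$ at the end. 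Both are fine; the paper's route is marginally slicker because it avoids the $\epsilon$-intersection entirely and in particular obtains the lower bound $\limsup\ge 1$ in one shot (your detour through $1-\epsilon$ is harmless but unnecessary, since $\sum 2G(a(n))=\sum 2/n$ already diverges at $K=1$).
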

\begin{remark}
Under the hypotheses of Theorem \ref{rapid_tails_theorem}, $G^{-1}(1/x)$ is slowly varying as $x\to\infty$, since $\mu(x) \to \infty$ as $x\to\infty$ is a consequence of assuming that $\mu$ is non--decreasing and obeys \eqref{mu_sum_finite}. Hence $G \in \text{RV}_\infty(-\infty)$ and $F \in \text{RV}_{-\infty}(-\infty)$. We also note that symmetry in the tails of the distribution function is not an essential feature of the result stated above and this could be relaxed in the spirit of Theorem \ref{rv_tails_theorem} below.
\end{remark}
It follows that if $H$ is a sequence of random variables satisfying the hypotheses of Theorem \ref{rapid_tails_theorem}, then applying Theorem \ref{theorem.fluct} (which requires that $k$ and $r$ are summable) will show that the solution to \eqref{eq.x} obeys
\[
0 < \limsup_{n\to\infty}\frac{|x(n)|}{G^{-1}(1/n)} < \infty\quad \mbox{a.s.},
\]
generalising the conclusion of Example \ref{examp.anormal}.
\begin{example}
We now provide some straightforward examples of common distributions and slowly varying functions which satisfy the hypotheses of Theorem \ref{rapid_tails_theorem}. For the moment let  $\mu:[0,\infty)\mapsto (1,\infty)$ be an arbitrary increasing and divergent function.

First, we take an example of a function which grows particularly rapidly within the class of slowly varying functions but is still easily seen to be super--slowly varying (one could of course construct a distribution function with corresponding tails). Let $G^{-1}(1/x) /\exp( \log^\alpha x)\to 1$ as $x \to\infty$, with $\alpha \in (0,1)$. Now check the definition of super--slow variation (with $\delta = 1$) directly as follows:
\begin{align*}
\lim_{x\to\infty}\frac{G^{-1}(1/x \mu(x))}{G^{-1}(1/x)} = \lim_{x\to\infty}\exp\left( \log^\alpha (x \mu(x)) - \log^\alpha (x) \right) = \exp\left( \lim_{x\to\infty} \log^\alpha (x \mu(x)) - \log^\alpha (x) \right).
\end{align*}
Define $\xi(x) = \log^\alpha (x \mu(x)) - \log^\alpha (x) = (\log x + \log\mu(x))^\alpha - \log^\alpha (x)$ for $x>1$. If we can choose $\mu$ such that $\lim_{x\to\infty}\xi(x) = 0$, then $G^{-1}(1/x) \in \mu$--SSV. Apply the mean value theorem to $\xi$ to yield
\[
\xi(x) = \alpha (\log x + \theta_x \log\mu(x))^{\alpha-1} \log\mu(x),
\]
for some $\theta_x \in [0,1]$. From this simple estimate we can see that 
\[
\lim_{x\to\infty}\frac{\xi(x)\log^{1-\alpha}x}{\alpha\log\mu(x)} =1.
\]
Hence we could choose $\mu$ such that $\mu(x) / \log^\beta x \to 1$ as $x\to\infty$ with $\beta>1$ and obtain $\lim_{x\to\infty}\xi(x)=0$, as required. This choice would also provide a $(\mu,\delta^*)$ pair satisfying the summability condition \eqref{mu_sum_finite}, since 
\[
\sum_{n=1}^\infty \frac{1}{n \log^\beta(n+1)} < \infty, \mbox{ for each }\beta >1,
\]
by the Cauchy condensation test.

In the case of the normal distribution with variance $\sigma^2$, we have $G^{-1}(1/x) / \sqrt{2\sigma^2\log x} \to 1$ as $x\to\infty$. We once more choose $\delta=1$ and in this instance we require
\begin{align*}
\lim_{x\to\infty}\frac{G^{-1}(1/x \mu(x))}{G^{-1}(1/x)} = \lim_{x\to\infty}\frac{\sqrt{2\sigma^2 \log(x\mu(x))}}{\sqrt{2\sigma^2 \log(x)}} = \sqrt{1 + \lim_{x\to\infty}\frac{\log(\mu(x))}{\log(x)}} = 1.
\end{align*}
A sufficient condition for the limit above to hold is clearly $\lim_{x\to\infty}\log(\mu(x))/\log(x) = 0$. Hence we could choose $\mu$ such that $\mu(x) / \log^\beta x\to 1$ as $x\to\infty$ with $\beta>1$, as in the previous example, and satisfy both condition \eqref{mu_sum_finite} and the requirement that $G^{-1}(1/x) \in \mu$--SSV.

Finally, consider the Weibull distribution which has $G^{-1}(1/x) = \lambda(\log(x))^k$ for $x>1$, with $k$ and $\lambda$ strictly positive constants. This contains the exponential ($k=1$) and Rayleigh ($k=2$) distributions as special cases. Since the Weibull distribution is defined only on the positive half line it does not satisfy the tail symmetry required by Theorem \ref{rapid_tails_theorem} but, as remarked earlier, this can be easily remedied without much additional effort (see Theorem \ref{rv_tails_theorem}). Analogous to the case of the normal distribution, we require
\begin{align*}
\lim_{x\to\infty}\frac{G^{-1}(1/x \mu(x))}{G^{-1}(1/x)} = \lim_{x\to\infty}\left(\frac{\log(x\mu(x))}{\log(x)} \right)^k = \left(1 + \lim_{x\to\infty}\frac{\log(\mu(x))}{\log(x)} \right)^k = 1.
\end{align*}
Thus we seek to arrange that $\lim_{x\to\infty}\log(\mu(x))/\log(x) = 0$; it is again convenient and natural to take $\mu$ such that $\mu(x) / \log^\beta x \to 1$ as $x\to\infty$ with $\beta>1$, which, as we have already seen, satisfies all the requirements of Theorem \ref{rapid_tails_theorem}. 
\end{example}
In many applications, particularly in economics and finance, it is important to understand the behaviour of systems driven or corrupted by random noise which is characterised by slow decay in the tails of the related distribution function --- so--called ``heavy tailed'' distributions. Our next example provides a simple application of our results in such a situation.

\begin{example}\label{examp.power}
We consider the case of a symmetric heavy tailed distribution with power law decay in the tails. 
Suppose that $H(n)$ are i.i.d. random variables such that there is $\alpha>0$ and finite $c_1,c_2>0$ for which
\[
\lim_{x\to -\infty}\frac{F(x)}{|x|^{-\alpha}}=c_1, \quad \lim_{ x\to+\infty}\frac{1-F(x)}{x^{-\alpha}}=c_2.  
\]
If $a_+$ and $a_-$ are sequences such that 
\[
\sum_{n=0}^\infty a_+(n)^{-\alpha} <+\infty, \quad \sum_{n=0}^\infty a_-(n)^{-\alpha} =+\infty,
\]
then $S(K,a_+)<+\infty$ for all $K>0$ and $S(K,a_-)=+\infty$ for all $K> 0$. Therefore, for all $K>0$, $\limsup_{n\to\infty} |H(n)|/a_+(n) \leq K$, on $\Omega_K^+$. The event $\Omega^+=\cap_{K\in \mathbb{Q}^+} \Omega_K^+$ has probability one and 
\[
\limsup_{n\to\infty} \frac{|H(n)|}{a_+(n)}=0,  \quad\text{on $\Omega^+$}.
\]
On the other hand, for all $K>0$ there is an a.s. event $\Omega_K^-$ such that
$\limsup_{n\to\infty} |H(n)|/a_-(n) \geq K$ on $\Omega_K^-$. Consider the event $\Omega^-=\cap_{K\in \mathbb{Z}^+} \Omega_K^-$. Then $\Omega^-$ is an almost sure event and we have 
\[
\limsup_{n\to\infty} \frac{|H(n)|}{a_-(n)}=+\infty,  \quad\text{on $\Omega^-$}.
\]
Finally, construct the a.s. event $\Omega^\ast=\Omega^+\cap\Omega^-$ and notice that
\[
\limsup_{n\to\infty} \frac{|H(n)|}{a_+(n)}=0,  \quad  \limsup_{n\to\infty} \frac{|H(n)|}{a_-(n)}=\infty, \text{ on $\Omega^\ast$}.
\] 
Applying part (a.) of Theorem~\ref{theorem.fluct} with $a=a_+$ and part (c.) with $a=a_-$, we obtain 
\[
\limsup_{n\to\infty} \frac{|x(n)|}{a_+(n)}=0,\quad \limsup_{n\to\infty} \frac{|x(n)|}{a_-(n)}=+\infty,\quad \text{on $\Omega^\ast$}.
\]
We now try to choose $a_+$ and $a_-$ ``close'' to one another, in an appropriate sense. For every $\epsilon>0$ sufficiently small take $a_{\pm}(n)$ to be $a_{\pm\epsilon}(n)=n^{1/\alpha\pm\epsilon}$. First, from the existence of the sequences $a_{\pm\epsilon}$ we conclude that there are a.s. events $\Omega_\epsilon^-$ and $\Omega_\epsilon^+$ such that
\[
\limsup_{n\to\infty} \frac{|x(n)|}{n^{1/\alpha -\epsilon}}=+\infty, \text{ on $\Omega_\epsilon^-$\,\,\, and}\quad \limsup_{n\to\infty} \frac{|x(n)|}{n^{1/\alpha +\epsilon}}=0, \text{ on $\Omega_\epsilon^+$}.
\]
Now we seek $\epsilon$--independent limits. We conclude from the limits above that 
\[
\limsup_{n\to\infty} \frac{\log |x(n)|}{\log n} \geq \frac{1}{\alpha}-\epsilon, \text{ on $\Omega_\epsilon^-$\,\,\, and}\quad \limsup_{n\to\infty} \frac{\log |x(n)|}{\log n} \leq \frac{1}{\alpha}+\epsilon, \text{ on $\Omega_\epsilon^+$}.
\]
Finally, by constructing the a.s. event $
\Omega^\ast=\{\cap_{\epsilon\in \mathbb{Q}^+} \Omega_\epsilon^+\} \cap \{\cap_{\epsilon\in \mathbb{Q}^+} \Omega_\epsilon^-\}$, it follows that
\[
\limsup_{n\to\infty} \frac{\log|x(n)|}{\log n}=\frac{1}{\alpha}, \quad\text{a.s.}
\]
\end{example}
While the preceding example is in some ways quite special, it is worthwhile pointing out one aspect which is generic. If the sequence of i.i.d. random variables $(H(n))_{n \geq 0}$ has a generalized power law decay in the tails, it is not possible to find an increasing sequence $(a(n))_{n \geq 0}$ tending to infinity which characterizes the partial maxima of $(H(n))_{n \geq 0}$ in the sense of \eqref{BC_exact}. Hence there was no possible ``smarter choice'' of the sequences $(a_\pm(n))_{n \geq 0}$ in Example \ref{examp.power} that could have given us information of the same quality as in Example \ref{examp.anormal}; this also demonstrates the practical merit of parts $(a.)$ and $(c.)$ of Theorem \ref{theorem.fluct}. The following result makes the claim above precise.
\begin{theorem}\label{rv_tails_theorem}
Let $(H(n))_{n \geq 0}$ be a sequence of i.i.d. random variables supported on $\mathbb{R}$ with continuous distribution function $F$. Suppose that one of the following holds:
\begin{enumerate}
\item[(i.)] $1-F \in \text{RV}_\infty(-\alpha)$ for some $\alpha > 0$ and $\lim_{x\to\infty}(1-F(x))/F(-x) = \infty$;
\item[(ii.)] $F \in \text{RV}_{-\infty}(-\alpha)$ for some $\alpha > 0$ and $\lim_{x\to\infty}(1-F(x))/F(-x) = 0$;
\item[(iii.)] $1-F \in \text{RV}_\infty(-\alpha)$ for some $\alpha > 0$ and $\lim_{x\to\infty}(1-F(x))/F(-x) = L \in (0,\infty)$.
\end{enumerate}
For each positive, increasing (deterministic) sequence $(a(n))_{n \geq 0}$ which tends to infinity, either
\begin{equation}\label{limsup_zero_or_infty}
\limsup_{n\to\infty}\frac{|H(n)|}{a(n)} = 0 \quad\mbox{a.s.,\,\,\, or}\quad\limsup_{n\to\infty}\frac{|H(n)|}{a(n)} = \infty \quad\mbox{a.s.} 
\end{equation}
\end{theorem}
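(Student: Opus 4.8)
The plan is to exploit the Borel--Cantelli dichotomy already established in this section. Writing $S(a,K)$ as in \eqref{S_defn}, the events $\{|H(n)|>Ka(n)\}$ are independent with probabilities $1-F(Ka(n))+F(-Ka(n))$ (using continuity of $F$ to discard point masses), so $\limsup_{n\to\infty}|H(n)|/a(n)\leq K$ a.s. whenever $S(a,K)<\infty$, and $\geq K$ a.s. whenever $S(a,K)=\infty$. The crux of the argument is therefore to show that, under each of the three hypotheses, the finiteness of $S(a,K)$ does not depend on $K$: either $S(a,K)<\infty$ for all $K>0$, or $S(a,K)=\infty$ for all $K>0$. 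Once this $K$--independence is secured, a finite nonzero value of the $\limsup$ is excluded, forcing the zero/infinity alternative.

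To establish the $K$--independence I would compare the summand of $S(a,K)$ with that of $S(a,1)$ and show the ratio tends to a strictly positive constant, so the two series converge or diverge together by the limit comparison test. Consider case (i.): since $1-F\in\text{RV}_\infty(-\alpha)$, write $1-F(x)=x^{-\alpha}\ell(x)$ with $\ell$ slowly varying; because $a(n)\to\infty$, slow variation gives $\ell(Ka(n))/\ell(a(n))\to1$ for each fixed $K$, whence $1-F(Ka(n))\sim K^{-\alpha}(1-F(a(n)))$ as $n\to\infty$. The hypothesis $\lim_{x\to\infty}(1-F(x))/F(-x)=\infty$ forces $F(-Ka(n))=o(1-F(Ka(n)))$, so the full summand satisfies
\[
1-F(Ka(n))+F(-Ka(n)) \sim K^{-\alpha}\bigl(1-F(a(n))+F(-a(n))\bigr), \quad n\to\infty.
\]
The ratio of summands thus tends to the positive constant $K^{-\alpha}$, and the limit comparison test yields $S(a,K)<\infty\iff S(a,1)<\infty$. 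Case (ii.) is symmetric: the left tail now dominates, $F(-x)=x^{-\alpha}\tilde\ell(x)$ is the regularly varying quantity, and the same computation produces the proportionality constant $K^{-\alpha}$. In case (iii.) both tails are comparable; since $1-F\in\text{RV}_\infty(-\alpha)$ and $F(-x)\sim(1-F(x))/L$, the summand is asymptotic to $(1+1/L)K^{-\alpha}(1-F(a(n)))$, again proportional to the $K=1$ summand.

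With the $K$--independence in hand the dichotomy \eqref{limsup_zero_or_infty} follows by the standard rationals argument. If $S(a,1)<\infty$ then $S(a,K)<\infty$ for every $K>0$, so $\limsup_{n\to\infty}|H(n)|/a(n)\leq K$ a.s.; intersecting the corresponding almost sure events over $K\in\mathbb{Q}\cap(0,\infty)$ and letting $K\to0^+$ gives $\limsup_{n\to\infty}|H(n)|/a(n)=0$ a.s. If instead $S(a,1)=\infty$ then $S(a,K)=\infty$ for every $K>0$, so $\limsup_{n\to\infty}|H(n)|/a(n)\geq K$ a.s.; intersecting over $K\in\mathbb{Z}^+$ and letting $K\to\infty$ gives $\limsup_{n\to\infty}|H(n)|/a(n)=\infty$ a.s. The main obstacle, and the only place the hypotheses do real work, is the asymptotic proportionality of the summands: it is precisely the regular variation of the dominant tail that turns scaling by $K$ into multiplication by the constant $K^{-\alpha}$, so that no finite, nonzero value of the $\limsup$ can be tuned by the choice of $a$, while the tail--ratio conditions guarantee that the subdominant tail never disturbs this comparison.
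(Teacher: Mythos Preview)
Your proposal is correct and follows essentially the same approach as the paper: both arguments use the Borel--Cantelli dichotomy via $S(a,K)$ and exploit regular variation of the dominant tail to show that finiteness of $S(a,K)$ is independent of $K$, with the tail--ratio hypothesis ensuring the subdominant tail does not interfere. The only cosmetic difference is that the paper fixes a $K^*$ with $S(a,K^*)<\infty$ and compares directly, whereas you phrase the same comparison via the limit comparison test against $S(a,1)$.
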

We emphasise the fact that $a$ is a deterministic sequence in the result above because one could choose the a.s. increasing random sequence $a(n) = \max_{1\leq j \leq n}|H(j)|$ for each $n \geq 0$ and obtain nontrivial limits in \eqref{limsup_zero_or_infty}. However, understanding the stochastic process $H$ in terms of the closely related process $\left(\max_{1\leq j \leq n}|H(j)|\right)_{n \geq 0}$ clearly does not provide the same insight as a result such as Theorem \ref{rapid_tails_theorem}. The case when the distribution function has symmetric tails is trivially included in case $(iii.)$ of the result above and hence it applies to our earlier example of power law decay.
\section{Linearisation at Infinity}\label{linearisation}
Consider the nonlinear Volterra summation equation given by
\begin{equation} \label{eq.nonx}
x(n+1)=H(n+1)+\sum_{j=0}^n k(n-j)f(x(j)), \quad n\geq 1; \quad x(0)=\xi, 
\end{equation} 
where $f$ obeys
\begin{equation} \label{eq.f1}
f\in C(\mathbb{R};\mathbb{R}).
\end{equation}
Most models of macroeconomic growth have nonlinear equations of motion for one or more state variables; however, in many cases, these equations contain a term which is linear in the state. Furthermore, based on standard economic considerations, the nonlinear terms in these models are generally sublinear at infinity (in the sense that $\lim_{|x|\to\infty}g(x)/x=0$). For example, consider the seminal discrete time Solow model with no technological progress or population growth (see Solow \cite{solow1956contribution} or, for a more modern account, Acemoglu \cite{acemoglu2008introduction}). The evolution of output per capita $k$ is given by the nonlinear difference equation
\begin{equation}\label{solow}
k(n+1) = f(k(n)) := (1-\delta)k(n) + s\,g(k(n)), \quad n \geq 0,
\end{equation}
where $g$ obeys $\lim_{k \to \infty}g'(k)=0$ (due to the Inada conditions on the aggregate production function), $\delta\in(0,1)$ is the rate of depreciation per time period and $s\in(0,1)$ is the exogenous savings rate. The structure of the nonlinearity highlighted above  is, in some sense, generic and can be found in many of the macroeconomic growth models subsequently developed from the basic Solow model. The aforementioned linear leading order behaviour is also found in analogous macroeconomic models incorporating delays (see d'Halbis et al. \cite[Example 4.2]{d2014multiple} for a neoclassical growth model with delay). Hence it is natural, in this context, to assume that the nonlinear function $f$ in \eqref{eq.nonx} obeys
\begin{equation} \label{eq.f2}
\lim_{x\to\infty} \frac{f(x)}{x}=1, \quad \lim_{x\to-\infty} \frac{f(x)}{x}=1.
\end{equation}
Under the hypothesis \eqref{eq.f2}, it is reasonable to conjecture that $x$ may have similar asymptotic behaviour to its ``linearisation at infinity'', which solves the equation 
\begin{equation} \label{eq.y}
y(n+1)=H(n+1)+\sum_{j=0}^n k(n-j)y(j), \quad n\geq 1;\quad y(0)=\xi.
\end{equation}
The next result goes some distance to supporting this claim. It can then be used as a lemma to establish results on the fluctuation and growth of the solution to \eqref{eq.nonx}, once the growth or fluctuation of the sequence $H$ is sufficiently well--understood.
\begin{theorem} \label{theorem.nonlineargrowth}
Let $x$ and $y$ denote the solutions to \eqref{eq.nonx} and \eqref{eq.y} respectively. Suppose that $k$ and the solution $r$ of \eqref{eq.r} are summable, and that $f$ obeys \eqref{eq.f1} and \eqref{eq.f2}.  If $(a(n))_{n\geq 0}$ is an increasing sequence such that 
\[
\limsup_{n\to\infty} \frac{|H(n)|}{a(n)}<+\infty,
\]
then
\[
\limsup_{n\to\infty} \frac{|y(n)|}{a(n)}<+\infty,\quad\text{and}\quad\lim_{n\to\infty} \frac{x(n)-y(n)}{a(n)}=0.
\]
\end{theorem}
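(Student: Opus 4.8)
The first assertion, that $\limsup_{n\to\infty}|y(n)|/a(n)<+\infty$, requires no new work: since $y$ solves \eqref{eq.y}, which is precisely \eqref{eq.x} with forcing $H$, and since $\Lambda_a|H|<\infty$ rules out case (c.) of Theorem~\ref{theorem.fluct}, parts (a.) and (b.) of that theorem give $\Lambda_a|y|=\Lambda_a|H|<\infty$ directly. The substance of the proof is therefore the estimate $\lim_{n\to\infty}(x(n)-y(n))/a(n)=0$.

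The plan is to exploit \eqref{eq.f2} by writing $f(u)=u+\psi(u)$, where $\psi(u):=f(u)-u$ is \emph{sublinear at infinity}, i.e. $\psi(u)/u\to 0$ as $|u|\to\infty$. Substituting this decomposition into \eqref{eq.nonx} shows that $x$ solves the linear recursion \eqref{eq.x} driven by the perturbed forcing $\widehat{H}(n+1)=H(n+1)+\sum_{j=0}^n k(n-j)\psi(x(j))$, with $x(0)=\xi$. Applying the variation of constants formula \eqref{eq.xrep} to both $x$ (forcing $\widehat{H}$) and $y$ (forcing $H$) and subtracting, the terms $r(n)\xi$ and $\sum_m r(n-m)H(m)$ cancel, leaving a double convolution. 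Interchanging the order of summation and using the defining identity $\sum_{i=0}^{p}k(p-i)r(i)=r(p+1)$ from \eqref{eq.r} to collapse the inner sum, I expect to arrive at the clean representation
\[
x(n)-y(n)=\sum_{j=1}^n r(j)\,\psi\bigl(x(n-j)\bigr),\qquad n\geq 1.
\]
Establishing this identity is the technical heart of the argument.

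Before passing to the limit I must secure the a priori bound $\Lambda_a|x|<\infty$, and this is the step I expect to be the main obstacle, since the representation above is implicit in $x$. The sublinearity of $\psi$ together with the continuity \eqref{eq.f1} yields, for each $\epsilon>0$, a constant $B_\epsilon$ with $|\psi(u)|\leq \epsilon|u|+B_\epsilon$ for all $u\in\mathbb{R}$. Dividing the representation by $a(n)$, using that $a$ is increasing (so $a(n-j)\leq a(n)$, hence $|x(n-j)|/a(n)\leq |x(n-j)|/a(n-j)$), and writing $w(n)=|x(n)|/a(n)$, I obtain
\[
w(n)\leq Q(n)+\epsilon\sum_{j=1}^n |r(j)|\,w(n-j),\qquad Q(n):=\frac{|y(n)|}{a(n)}+\frac{B_\epsilon|r|_1}{a(n)},
\]
where $Q$ is bounded by the first assertion and $a(n)\to\infty$. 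Choosing $\epsilon<1/|r|_1$ so that $\theta:=\epsilon|r|_1<1$, a discrete Gronwall (comparison) argument on $W_n:=\max_{0\le m\le n}w(m)$ --- noting that $w(n)\leq \sup_m Q(m)+\theta W_{n-1}$ --- gives $W_n\leq\max\bigl(w(0),\sup_m Q(m)/(1-\theta)\bigr)$ for all $n$, whence $\Lambda_a|x|<\infty$.

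With the a priori bound in hand, the decay follows from the same two ingredients. Fix $\epsilon>0$; then
\[
\frac{|x(n)-y(n)|}{a(n)}\leq \frac{\epsilon}{a(n)}\sum_{j=1}^n |r(j)|\,|x(n-j)|+\frac{B_\epsilon|r|_1}{a(n)}.
\]
The second term vanishes as $n\to\infty$. For the first, I split the sum according to whether $n-j$ exceeds a threshold $N_1$ beyond which $|x(m)|\leq(\Lambda_a|x|+1)a(m)$: the tail where $n-j<N_1$ contributes at most $\epsilon X_0|r|_1/a(n)\to 0$ (with $X_0$ bounding the finitely many initial values of $|x|$), while the main part is bounded by $\epsilon(\Lambda_a|x|+1)|r|_1$ using $a(n-j)\leq a(n)$ and the summability of $r$. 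Hence $\limsup_{n\to\infty}|x(n)-y(n)|/a(n)\leq \epsilon(\Lambda_a|x|+1)|r|_1$, and letting $\epsilon\downarrow 0$ completes the proof.
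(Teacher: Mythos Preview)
Your proof is correct and follows a genuinely different, cleaner route than the paper's. The key simplification is your representation $x(n)-y(n)=\sum_{j=1}^n r(j)\,\psi(x(n-j))$, obtained by collapsing the double convolution $\sum_m r(n-m)\sum_j k(m-1-j)\psi(x(j))$ via the resolvent identity \eqref{eq.r}. The paper stops short of this collapse: it writes $z=x-y$ as $z(n)=\sum_{m=1}^n r(n-m)G(m)$ with $G(m)=\sum_{j} k(m-1-j)\phi(x(j))$, then bounds $|z|$ by a comparison solution $z_2$ satisfying a new Volterra equation with kernel $\epsilon\,c$, where $c(n)=\sum_{j=0}^n|r(n-j)||k(j)|$, and checks that the associated resolvent $r_2$ is summable for $\epsilon$ small. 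Your single-convolution form removes $k$ from the picture entirely and replaces the comparison-equation machinery with a direct discrete Gronwall bound on $W_n=\max_{m\leq n}|x(m)|/a(m)$, followed by a straightforward split-and-estimate argument. This is both shorter and more transparent. The paper's approach, by contrast, avoids the separate a priori step by writing $|x|\leq |y|+|z|$ inside the estimate, so that the bound on $z$ is obtained self-consistently; but this buys little once your cleaner representation is in hand. One small slip worth noting: Theorem~\ref{theorem.fluct} does not give $\Lambda_a|y|=\Lambda_a|H|$, only that both are finite together; this is harmless since you only use finiteness.
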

The result above shows that when $\Lambda_a|H|\in (0,\infty)$, not only is $\Lambda_a|y|\in (0,\infty)$, but also $\Lambda_a|x|=\Lambda_a|y|$; in other words, the solutions to \eqref{eq.nonx} and \eqref{eq.y} are coupled together so that the difference between them is always $o(a(n))$ as $n\to\infty$. Moreover, we have shown that $x(n)/a(n) \sim y(n)/a(n)$ as $n\to\infty$ and hence we can prove a corollary of Theorem~\ref{theorem.nonlineargrowth} which constitutes a nonlinear version of Theorem \ref{theorem.growth3}. 
\begin{theorem} \label{theorem.nongrowth2}
	Let $x$ denote the solution to \eqref{eq.nonx}, and suppose that $k$ and the solution $r$ of \eqref{eq.r} are summable. If $f$ obeys \eqref{eq.f1} and \eqref{eq.f2}, and $G_\lambda$ is  defined by \eqref{def.Glam}, then the following are equivalent:
	\begin{itemize}
		\item[(a.)] $H\in BG_{a,\lambda}$;
		\item[(b.)] $x\in BG_{a,\lambda}$.
	\end{itemize}
	Moreover, when $H \in BG_{a,\lambda}$,
	\begin{equation}\label{x_asym_relation_nonlinear}
	\frac{x(n)}{a(n)} \sim   (\lambda_a H)(n) + \sum_{j=1}^n r(j)\lambda^j (\lambda_a H)(n-j), \mbox{ as }n\to\infty,
	\end{equation}
	and similarly, when $x \in BG_{a,\lambda}$,
	\begin{equation}\label{H_asym_relation_nonlinear}
	\frac{H(n)}{a(n)} \sim (\lambda_a x)(n) - \sum_{j=0}^{n-1} k(j)\lambda^{j+1} (\lambda_a x)(n-j-1), \mbox{ as }n\to\infty.
	\end{equation} 
\end{theorem}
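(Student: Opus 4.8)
The plan is to bootstrap the nonlinear statement off the linear Theorem~\ref{theorem.growth3}, using the coupling between $x$ and its linearisation $y$ supplied by Theorem~\ref{theorem.nonlineargrowth}. The single observation driving everything is that, whenever $\limsup_{n\to\infty}|H(n)|/a(n)<\infty$, Theorem~\ref{theorem.nonlineargrowth} guarantees both $\limsup_{n\to\infty}|y(n)|/a(n)<\infty$ and $(x(n)-y(n))/a(n)\to0$; the latter says precisely that $x/a$ and $y/a$ are asymptotically equivalent, so that $\lambda_a x$ and $\lambda_a y$ may be taken to be the same sequence. Since $y$ is exactly the solution of the linear equation \eqref{eq.y} forced by $H$, the representations \eqref{x_asym_relation} and \eqref{H_asym_relation} of Theorem~\ref{theorem.growth3} apply verbatim to $y$, and I would then transport them to $x$ through this asymptotic equivalence.

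For the implication (a.)$\Rightarrow$(b.) this is immediate. Assuming $H\in BG_{a,\lambda}$ we have $\limsup_{n\to\infty}|H(n)|/a(n)<\infty$, so applying the linear Theorem~\ref{theorem.growth3} to $y$ (legitimate since $H\in BG_{a,\lambda}$) gives $y\in BG_{a,\lambda}$ together with $y(n)/a(n)\sim(\lambda_a H)(n)+\sum_{j=1}^n r(j)\lambda^j(\lambda_a H)(n-j)$. Theorem~\ref{theorem.nonlineargrowth} then yields $x/a\sim y/a$, whence $x\in BG_{a,\lambda}$, and replacing $y/a$ by the asymptotically equal $x/a$ produces exactly \eqref{x_asym_relation_nonlinear}.

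The converse (b.)$\Rightarrow$(a.) is where the genuinely new work lies, because Theorem~\ref{theorem.nonlineargrowth} is phrased as a hypothesis on $H$, not on $x$. Here I would first establish directly that $x\in BG_{a,\lambda}$ forces $\limsup_{n\to\infty}|H(n)|/a(n)<\infty$. Writing $g:=f-\mathrm{id}$, continuity of $f$ together with \eqref{eq.f2} gives, for each $\epsilon>0$, a constant $C_\epsilon$ with $|g(u)|\le\epsilon|u|+C_\epsilon$ for all $u\in\mathbb{R}$, and hence $|f(u)|\le(1+\epsilon)|u|+C_\epsilon$. Inserting this into $H(n+1)=x(n+1)-\sum_{j=0}^n k(n-j)f(x(j))$, using $|x(j)|\le C a(j)$ (boundedness of $x/a$) and the monotonicity of $a$ to bound $\sum_{j=0}^n|k(n-j)|a(j)\le a(n+1)|k|_1$, yields $\limsup_{n\to\infty}|H(n)|/a(n)\le (1+\epsilon)C|k|_1$, the contribution of $C_\epsilon$ vanishing because $a(n+1)\to\infty$. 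As $\epsilon$ is arbitrary and $|x(n+1)|/a(n+1)$ is bounded, $H/a$ is bounded. With this in hand Theorem~\ref{theorem.nonlineargrowth} again gives $x/a\sim y/a$, and since $y$ solves the linear problem forced by $H$, the inverse representation \eqref{H_asym_relation} of Theorem~\ref{theorem.growth3} gives $H(n)/a(n)\sim(\lambda_a y)(n)-\sum_{j=0}^{n-1}k(j)\lambda^{j+1}(\lambda_a y)(n-j-1)$; replacing $\lambda_a y$ by $\lambda_a x$ delivers \eqref{H_asym_relation_nonlinear}.

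I expect the main obstacle to be the uniform control of the nonlinear remainder in the convolution, that is, showing $a(n+1)^{-1}\sum_{j=0}^n k(n-j)g(x(j))\to0$. The delicate point is that $x$ need not tend to infinity along every index, so the sublinearity in \eqref{eq.f2} must be combined with continuity of $f$ (to absorb the bounded excursions of $x$ where $|x(j)|\le M$) and with the summability of $k$ and the growth of $a$ (to control the weighted kernel tail where $|x(j)|>M$); splitting the sum at the threshold $M$ and estimating the two pieces separately, as above, is the cleanest route. A minor technical caveat is that $BG_{a,\lambda}$ only requires $a\in G_\lambda$, whereas Theorem~\ref{theorem.nonlineargrowth} asks for $a$ increasing; for $\lambda\in[0,1)$ membership in $G_\lambda$ already forces $a$ to be eventually increasing, and otherwise one may replace $a$ by an asymptotically equivalent increasing sequence without affecting any of the $\limsup$ or $\sim$ statements.
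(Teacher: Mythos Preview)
Your forward implication $(a.)\Rightarrow(b.)$ is exactly the paper's argument: apply Theorem~\ref{theorem.growth3} to the linearisation $y$, then transfer to $x$ via the coupling $x/a\sim y/a$ from Theorem~\ref{theorem.nonlineargrowth}.

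For the converse $(b.)\Rightarrow(a.)$ you take a genuinely different route from the paper. The paper argues \emph{directly} from the nonlinear equation: it writes
\[
\frac{H(n+1)}{a(n+1)}=\frac{x(n+1)}{a(n+1)}-\sum_{j=0}^n\frac{k(n-j)}{a(n+1)}\bigl\{f(x(j))-x(j)\bigr\}-\sum_{j=0}^n k(n-j)\frac{x(j)}{a(n+1)},
\]
shows the middle sum tends to zero using $|\phi(u)|\le\epsilon|u|+\Phi(\epsilon)$, and appeals to the estimates already inside the proof of Theorem~\ref{theorem.growth3} for the last sum. Your approach is more modular: you first show $H/a$ is bounded by the same $|\phi(u)|\le\epsilon|u|+\Phi(\epsilon)$ estimate, then re-invoke Theorem~\ref{theorem.nonlineargrowth} to obtain $x/a\sim y/a$, deduce $y\in BG_{a,\lambda}$ with $\lambda_a y=\lambda_a x$, and finally quote the reverse representation \eqref{H_asym_relation} of Theorem~\ref{theorem.growth3} applied to $y$. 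Both are correct; your version has the virtue of reducing the nonlinear converse entirely to the already-proved linear statement without reopening its proof, while the paper's direct computation avoids the detour through a second application of Theorem~\ref{theorem.nonlineargrowth}. One small point: your displayed bound $\limsup|H(n)|/a(n)\le(1+\epsilon)C|k|_1$ omits the contribution of $|x(n+1)|/a(n+1)$; the correct bound is $C+(1+\epsilon)C|k|_1$, though this does not affect the conclusion that $H/a$ is bounded. Your remark about the monotonicity of $a$ is a fair caveat that the paper itself treats loosely (it uses the uniform bound $a(j)/a(n)\le\bar A$ throughout the proof of Theorem~\ref{theorem.growth3} without comment).
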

Similarly, it is straightforward to generate nonlinear versions of Theorem \ref{theorem.growth2}, Proposition \ref{prop.periodic}, and Proposition \ref{prop.ergodic} using the same line of arugment used to establish Theorem \ref{theorem.nongrowth2}.

Theorem~\ref{theorem.nonlineargrowth} has another nice corollary when $H$ obeys the hypotheses of Theorem~\ref{theorem.fluct}, and in fact the solution of the nonlinear equation inherits the fluctuation bounds seen in \eqref{eq.x}.
\begin{theorem} \label{theorem.nonlinearfluct}
	Let $x$ denote the solution to \eqref{eq.nonx}, and suppose that $k$ and the solution $r$ of \eqref{eq.r} are summable. Suppose further that $f$ obeys \eqref{eq.f1} and 
	\eqref{eq.f2}. If $(a(n))_{n\geq 0}$ is an increasing sequence such that $\Lambda_a|H|\in [0,\infty]$, then 
	\begin{itemize}
		\item[(a.)] $\Lambda_a|x|=0$ if and only if  $\Lambda_a|H|=0$. 
		\item[(b.)] $\Lambda_a|x|\in (0,\infty)$ if and only if $\Lambda_a|H|\in (0,\infty)$.
		\item[(c.)] $\Lambda_a|x|=\infty$ if and only if $\Lambda_a|H|=\infty$.
	\end{itemize}
\end{theorem}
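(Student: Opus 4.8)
The plan is to transfer the fluctuation trichotomy from the linearised equation \eqref{eq.y} to the nonlinear equation \eqref{eq.nonx} by exploiting the tight coupling between their solutions furnished by Theorem~\ref{theorem.nonlineargrowth}. Write $y$ for the solution of \eqref{eq.y} and recall the linear fluctuation result Theorem~\ref{theorem.fluct}, which already gives the desired correspondence between $\Lambda_a|y|$ and $\Lambda_a|H|$. The only regime in which Theorem~\ref{theorem.nonlineargrowth} is applicable is when $\Lambda_a|H|<\infty$; there it yields simultaneously $\Lambda_a|y|<\infty$ and $\lim_{n\to\infty}(x(n)-y(n))/a(n)=0$. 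Since $\bigl|\,|x(n)|/a(n)-|y(n)|/a(n)\,\bigr|\le |x(n)-y(n)|/a(n)\to 0$, the latter forces $\Lambda_a|x|=\Lambda_a|y|$. Combining this identity with Theorem~\ref{theorem.fluct} immediately gives, \emph{within the regime} $\Lambda_a|H|<\infty$, the equivalences $\Lambda_a|x|=0\iff\Lambda_a|H|=0$ and $\Lambda_a|x|\in(0,\infty)\iff\Lambda_a|H|\in(0,\infty)$, together with the implication $\Lambda_a|H|<\infty\Rightarrow\Lambda_a|x|<\infty$. This disposes of the substance of parts (a.)\ and (b.).

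The genuine obstacle is part (c.), the case $\Lambda_a|H|=\infty$, precisely because Theorem~\ref{theorem.nonlineargrowth} is silent there. I would handle it directly by proving the contrapositive $\Lambda_a|x|<\infty\Rightarrow\Lambda_a|H|<\infty$. Assume $\limsup_{n\to\infty}|x(n)|/a(n)=:M<\infty$. Hypotheses \eqref{eq.f1}--\eqref{eq.f2} supply an at-most-linear bound $|f(u)|\le 2|u|+M_R$ valid for all $u$: choose $R$ with $|f(u)|<2|u|$ for $|u|>R$, and bound $f$ by its maximum $M_R$ on the compact set $[-R,R]$. Feeding $|x(j)|\le C a(j)$ (valid for $j$ large) into this bound and using \eqref{eq.nonx}, one estimates $|H(n+1)|\le |x(n+1)|+\sum_{j=0}^n|k(n-j)|\,|f(x(j))|$. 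Monotonicity of $a$ allows each $a(j)$ to be replaced by $a(n+1)$ in the convolution, so that sum is controlled by $C'|k|_1\,a(n+1)$; the finitely many initial indices $j$, where $x$ is not yet dominated by $Ca(j)$, contribute $\sum|k(n-j)|\,|f(x(j))|$ with $k(n-j)\to 0$, hence are $o(a(n+1))$. Dividing by $a(n+1)$ and taking $\limsup$ yields $\Lambda_a|H|\le (M+1)+C'|k|_1<\infty$, as required.

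Finally I would assemble the pieces using the fact that $[0,\infty]$ is partitioned by $\{0\}$, $(0,\infty)$ and $\{\infty\}$, so each of $\Lambda_a|x|$ and $\Lambda_a|H|$ lies in exactly one piece. Part (c.)\ forward is the contrapositive of the a priori bound just proved, while part (c.)\ backward, $\Lambda_a|x|=\infty\Rightarrow\Lambda_a|H|=\infty$, is the contrapositive of $\Lambda_a|H|<\infty\Rightarrow\Lambda_a|x|<\infty$ from the finite-regime analysis. For parts (a.)\ and (b.)\ the hypothesis on either side forces the relevant $\limsup$ to be finite: if $\Lambda_a|H|<\infty$ we are directly in the finite regime, and if $\Lambda_a|x|<\infty$ the bound of the second paragraph gives $\Lambda_a|H|<\infty$, placing us there as well; the equivalences of the first paragraph then close the argument. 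The main conceptual point is therefore that the infinite case cannot be reached through the linearisation lemma and must be treated by the elementary estimate above; everything else is bookkeeping built on Theorems~\ref{theorem.nonlineargrowth} and \ref{theorem.fluct}.
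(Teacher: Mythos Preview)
Your proposal is correct and follows essentially the same route as the paper: the paper also reads off the implications $\Lambda_a|H|\in\{0\}$ (resp.\ $(0,\infty)$) $\Rightarrow\Lambda_a|x|\in\{0\}$ (resp.\ $(0,\infty)$) from Theorem~\ref{theorem.nonlineargrowth}, handles the reverse implications by rewriting $H(n+1)=x(n+1)-\sum k(n-j)f(x(j))$ and bounding via \eqref{eq.f1}--\eqref{eq.f2} (invoking Lemma~\ref{lemma.convdiva} where you estimate by hand), and then obtains part (c.) by contradiction from part (b.). The only cosmetic difference is that you isolate the single a~priori bound $\Lambda_a|x|<\infty\Rightarrow\Lambda_a|H|<\infty$ and organise the trichotomy around it, whereas the paper phrases the same content as ``reverse implications of (a.)\ and (b.)''.
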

The proof of the forward implications of (a.) and (b.) can be read off from Theorem~\ref{theorem.nonlineargrowth}. The reverse implications can be established by rewriting $H$ in terms of $x$ according to 
\begin{equation*} 
H(n+1)=x(n+1)-\sum_{j=0}^n k(n-j)f(x(j)), \quad n\geq 1,
\end{equation*}
and using \eqref{eq.f1} and \eqref{eq.f2}, as well as results concerning the growth of convolutions (see Lemma~\ref{lemma.convdiva}), to bound the right hand side. As to the proof of the last part, take as hypothesis that $\Lambda_a|H|=\infty$. Suppose now that $\Lambda_a|x|<\infty$; then applying part (b.) gives a contradiction, so $\Lambda_a|H|=\infty$ implies $\Lambda_a|x|=\infty$. If, on the other hand we take as hypothesis that 
$\Lambda_a|x|=\infty$, and suppose that $\Lambda_a|H|<\infty$, applying part (b.) again gives a contradiction. Hence $\Lambda_a|x|=\infty$ implies $\Lambda_a|H|=\infty$.
\section{Proof of Theorem~\ref{theorem.growth2}} 
First note that the proof of Theorem \ref{theorem.growth3} does not depend on the conclusion of Theorem \ref{theorem.growth2} whatsoever. If $H \in G_\lambda$, then $H \in BG_{a,\lambda}$ with $a = H$ and $\lambda_a H = 1$. Hence Theorem \ref{theorem.growth3} implies that
\begin{align}\label{thm.1.limit}
\lim_{n\to\infty}\frac{x(n)}{H(n)} = 1 + \sum_{j = 1}^\infty r(j)\lambda^j.
\end{align}
When $\lambda = 0$,  $\lim_{n\to\infty}x(n)/H(n) = 1$. If $\lambda \in (0,1]$, we claim that 
\[
\sum_{j=0}^\infty r(j)\lambda^j = \frac{1}{1 - \sum_{j = 0}^\infty k(j)\lambda^{j+1}},
\]
since $r \in \ell^1(\mathbb{Z}^+)$. To see this first note that both $\left( \sum_{j=0}^N r(j)\lambda^j \right)_{N \geq 0}$ and $\left( \sum_{j=0}^N k(j)\lambda^{j+1} \right)_{N \geq 0}$ are Cauchy sequences because $r \in \ell^1(\mathbb{Z}^+)$ and $k \in \ell^1(\mathbb{Z}^+)$. Furthermore, by the same considerations, the limits of these sequences are finite. Recall that 
\[
r(n) = \sum_{j=0}^{n-1}k(n-1-j)r(j), \quad n \geq 1, \quad r(0)=1.
\]
Suppose $N > 1$ and calculate as follows:
\begin{align*}
\sum_{n = 0}^N r(n)\lambda^n &= 1 + \sum_{n=1}^N \sum_{j = 0}^{n-1}k(n-1-j)r(j)\lambda^n = 1 + \sum_{n=1}^N \sum_{l=0}^{n-1} k(l)r(n-l-1)\lambda^{l+1}\lambda^{n-l-1} \\ &= 1 + \sum_{l=0}^N k(l)\lambda^{l+1} \left(\sum_{n=l+1}^N r(n-l-1)\lambda^{n-l-1}\right) = 1 + \sum_{l=0}^N k(l)\lambda^{l+1} \left(\sum_{m=0}^{N-l+1} r(m)\lambda^{m}\right).
\end{align*}
Hence
\[
\sum_{j = 0}^\infty r(j)\lambda^j = 1 + \left(\sum_{l = 0}^\infty k(l)\lambda^{l+1} \right) \left( \sum_{m = 0}^\infty r(m)\lambda^m \right), \quad \mbox{when }\lambda\in (0,1].
\]
Rearrange to obtain 
\[
\sum_{j = 0}^\infty r(j)\lambda^j = \frac{1}{1 - \sum_{l = 0}^\infty k(l)\lambda^{l+1}}, \quad \mbox{when }\lambda\in (0,1].
\]
Note that the right--hand side of the equality above is finite due to \eqref{eq.chareqn}. Combine the calculation above with \eqref{thm.1.limit} to obtain
\[
\lim_{n\to\infty}\frac{x(n)}{H(n)} = \frac{1}{1 - \sum_{l = 0}^\infty k(l)\lambda^{l+1}}, \quad \mbox{when }\lambda\in [0,1].
\]
The above limit clearly implies that $x \in G_\lambda$ also.

Similarly, if $x \in G_\lambda$, then $x \in BG_{a,\lambda}$ with $a=x$ and $\lambda_a x = 1$. By Theorem \ref{theorem.growth3} and equation \eqref{H_asym_relation}, we have
\[
\lim_{n\to\infty}\frac{H(n)}{x(n)} = 1 - \sum_{j=0}^\infty k(j)\lambda^{j+1}.
\]
By analogous considerations, the limit above can be used to prove the converse claim.
\section{Proof of Theorem \ref{theorem.growth3}}
First assume that $H \in BG_{a,\lambda}$ and show that \eqref{x_asym_relation} holds; it is clear that $x \in BG_{a,\lambda}$ follows from \eqref{x_asym_relation}. Since $H \in BG_{a,\lambda}$, there is a bounded sequence $((\lambda_a H)(n))_{n \geq 0}$ such that 
\[
\lim_{n\to\infty}\left|(\lambda_a H)(n)-\frac{H(n)}{a(n)}\right| =0,
\]
for some $(a(n))_{n \geq 0} \in G_\lambda$. From \eqref{eq.xrep}, we have that
\begin{equation}\label{eq.prelim.est}
\frac{x(n)}{a(n)}  =  \frac{H(n)}{a(n)} + \frac{r(n)x(0)}{a(n)} + \frac{1}{a(n)}\sum_{j=1}^{n-1}r(n-j)H(j), \quad n \geq 2.
\end{equation}
Hence
\begin{multline}\label{eq.est.converse}
\left|\frac{x(n)}{a(n)} - (\lambda_a H)(n) - \sum_{l=1}^n r(l)\lambda^l (\lambda_a H)(n-l)\right| \leq \left|\frac{H(n)}{a(n)} - (\lambda_a H)(n)\right| + \left|\frac{r(n)x(0)}{a(n)}\right|\\ + \left| \sum_{j=1}^{n-1}r(n-j)\frac{H(j)}{a(n)} - \sum_{l=1}^n r(l)\lambda^l (\lambda_a H)(n-l)\right|, \quad n\geq 2.
\end{multline}
The first two terms on the right--hand side above clearly tend to zero as $n\to\infty$ and thus it remains to show that the final term also has limit zero. Let $\epsilon>0$ be arbitrary in what follows. Since $H \in BG_{a,\lambda}$, there exists $N_1(\epsilon) > 2$ such that
\begin{align}\label{N_1_def_a}
\left|\frac{H(n)}{a(n)} - (\lambda_a H)(n)\right| < \epsilon, \mbox{ for all } n \geq N_1(\epsilon).
\end{align}
Given $N_1(\epsilon)$, since $a \in G_\lambda$ and $r \in \ell^1(\mathbb{Z}^+)$, there exists $N_2(\epsilon) > 1$ such that 
\begin{align}\label{N_2(epsilon)_defn}
\left| \frac{1}{a(n)}\sum_{j=1}^{N_1-1} r(n-j)H(j) \right| < \epsilon, \mbox{ for all }n\geq N_2(\epsilon)\mbox{ and }j\in\{1,\dots,N_3+1\}.
\end{align}
Similarly, because $r \in \ell^1(\mathbb{Z}^+)$, there exists $N_3(\epsilon)>1$ such that 
$\sum_{j=N_3}^\infty |r(j)| < \epsilon$. Finally, since $a \in G_\lambda$, there is an $N_4(\epsilon)>1$ such that 
\[
\left|\frac{a(n-j)}{a(n)} - \lambda^j \right| < \epsilon, \mbox{ for all } n \geq N_4(\epsilon).
\]
First concentrate  on the final term on the right--hand side of \eqref{eq.prelim.est} and decompose as follows:
\begin{multline*}
\sum_{j=1}^{n-1}r(n-j)\frac{H(j)}{a(n)} = \sum_{j=1}^{N_1-1}r(n-j)\frac{H(j)}{a(n)} + \frac{1}{a(n)}\sum_{j = N_1}^{n-1}r(n-j)a(j)\left\{ \frac{H(j)}{a(j)} - (\lambda_a H)(j)\right\}\\ + \frac{1}{a(n)}\sum_{j=N_1}^{n-1}r(n-j)a(j)(\lambda_a H)(j), \quad n \geq N_1+1.
\end{multline*}
Splitting the final term in the expression above, with $n \geq N_1 + N_3 +3$, then yields
\begin{multline}\label{eq.split.terms}
\sum_{j=1}^{n-1}r(n-j)\frac{H(j)}{a(n)} = \sum_{j=1}^{N_1-1}r(n-j)\frac{H(j)}{a(n)} + \frac{1}{a(n)}\sum_{j = N_1}^{n-1}r(n-j)a(j)\left\{ \frac{H(j)}{a(j)} - (\lambda_a H)(j)\right\}\\ + \frac{1}{a(n)}\sum_{j=N_1}^{n-N_3-2}r(n-j)a(j)(\lambda_a H)(j) + \frac{1}{a(n)}\sum_{l=1}^{N_3+1}r(l)a(n-l)(\lambda_a H)(n-l),
\end{multline}
where the order of summation was reversed in the final term. Subtract $\sum_{l=1}^n r(l)\lambda^l (\lambda_a H)(n-l)$ from the expression above and take absolute values to obtain
\begin{subequations}\label{eq.key.est.multiple}
\begin{align}
\left| \sum_{j=1}^{n-1}r(n-j)\frac{H(j)}{a(n)} - \sum_{l=1}^n r(l)\lambda^l (\lambda_a H)(n-l)\right| &\leq \left|\sum_{j=1}^{N_1-1}r(n-j)\frac{H(j)}{a(n)}\right| \label{est_a}\\ &+ \sum_{j = N_1}^{n-1}|r(n-j)|\left|\frac{a(j)}{a(n)}\right|\left| \frac{H(j)}{a(j)} - (\lambda_a H)(j)\right|\label{est_b}\\ &+ \sum_{j=N_1}^{n-N_3-2}|r(n-j)|\left|\frac{a(j)}{a(n)}\right||(\lambda_a H)(j)| \label{est_c}\\ &+ \sum_{l=1}^{N_3+1}|r(l)|\left|\frac{a(n-l)}{a(n)} - \lambda^l\right||(\lambda_a H)(n-l)| \label{est_d}\\ &+ \sum_{l = N_3+2}^n |r(l)|\lambda^l|(\lambda_a H)(n-l)|\label{est_e},
\end{align}
\end{subequations}
which is valid for each $n \geq N_1+N_3+3$. Now let $n \geq N_1+N_2+N_3+ N_4+1$; the term on the right--hand side of \eqref{est_a} is less than $\epsilon$ for all $n \geq N_2$ by construction (see \eqref{N_2(epsilon)_defn}). Estimate the right--hand side of \eqref{est_b} as follows:
\[
\sum_{j = N_1}^{n-1}|r(n-j)|\left|\frac{a(j)}{a(n)}\right|\left| \frac{H(j)}{a(j)} - (\lambda_a H)(j)\right| \leq \epsilon \,\sum_{j = N_1}^{n-1}|r(n-j)|\left|\frac{a(j)}{a(n)}\right| \leq \epsilon\,\bar{A}\,|r|_1,
\]
where we have used that $a(j)/a(n)$ can be uniformly bounded by $\bar{A}>0$ and that the summation index started at $N_1$ (see \eqref{N_1_def_a}). Next estimate the term in \eqref{est_c}:
\[
\sum_{j=N_1}^{n-N_3-2}|r(n-j)|\left|\frac{a(j)}{a(n)}\right||(\lambda_a H)(j)| \leq \bar{\Lambda}\,\bar{A}\sum_{j = N_1}^{n-N_3-2}|r(n-j)|\leq \bar{\Lambda}\,\bar{A}\sum_{l = N_3+2}^{\infty}|r(l)| \leq \epsilon\,\bar{\Lambda}\,\bar{A},
\]
where $\lambda_a H$ was uniformly bounded by $\bar{\Lambda}>0$ and we also used the definition of $N_3(\epsilon)$. Now note that $n-N_3-1 \geq N_4$ and hence that we may estimate from \eqref{est_d} as below:
\[
\sum_{l=1}^{N_3+1}|r(l)|\left|\frac{a(n-l)}{a(n)} - \lambda^l\right||(\lambda_a H)(n-l)| \leq \epsilon\,\sum_{l=1}^{N_3+1}|r(l)||(\lambda_a H)(n-l)| \leq \epsilon\,\Lambda\,|r|_1.
\]
The estimation of \eqref{est_e} is handled by simply noting that $\lambda \in [0,1]$, $r \in \ell^1(\mathbb{Z}^+)$, and that $\lambda_a H$ is bounded; thus
\[
\sum_{l = N_3+2}^n |r(l)|\lambda^l|(\lambda_a H)(n-l)| \leq \bar{\Lambda}\sum_{l = N_3+2}^\infty |r(l)| \leq \bar{\Lambda}\epsilon.
\]
Returning to \eqref{eq.key.est.multiple}, we have shown that 
\[
\left| \sum_{j=1}^{n-1}r(n-j)\frac{H(j)}{a(n)} - \sum_{l=1}^n r(l)\lambda^l (\lambda_a H)(n-l)\right| \leq \epsilon \left(1+\bar{A}\,|r|_1 + \bar{\Lambda}\,\bar{A} + \Lambda\,|r|_1 + \bar{\Lambda} \right), 
\]
for each $n \geq N_1 + N_2 + N_3 + N_4 + 1$. Letting $n\to\infty$ and then $\epsilon\to 0$ in the estimate above, we have proven that 
\[
\lim_{n\to\infty}\left| \sum_{j=1}^{n-1}r(n-j)\frac{H(j)}{a(n)} - \sum_{l=1}^n r(l)\lambda^l (\lambda_a H)(n-l)\right|=0, \mbox{ for each }\lambda \in [0,1],
\]
and hence demonstrated that \eqref{x_asym_relation} holds.

For the converse result assume $x \in BG_{a,\lambda}$ and note that 
\[
\frac{H(n)}{a(n)} = \frac{x(n)}{a(n)} - \frac{1}{a(n)}\sum_{j = 0}^{n-1}k(n-1-j)x(j) = \frac{x(n)}{a(n)} - k(n-1)x(0) - \frac{1}{a(n)}\sum_{j = 1}^{n-1}k(n-1-j)x(j), \quad n \geq 1.
\]
Hence, for $n \geq 1$, we have
\begin{multline}\label{eq.interim.est}
\left|\frac{H(n)}{a(n)} - (\lambda_a x)(n) + \sum_{j=0}^{n-1}k(j)\lambda^{j+1}(\lambda_a x)(n-j-1)\right| \leq \left|\frac{x(n)}{a(n)} - (\lambda_a x)(n)\right| + \left| \frac{k(n-1)x(0)}{a(n)}\right| \\ +\left|\sum_{j = 1}^{n-1}k(n-1-j)\frac{x(j)}{a(n)} - \sum_{j=0}^{n-1}k(j)\lambda^{j+1}(\lambda_a x)(n-j-1)\right|.
\end{multline}
Now rewrite the final sum on the right--hand side above as follows:
\[
\sum_{j=0}^{n-1}k(j)\lambda^{j+1}(\lambda_a x)(n-j-1) = \sum_{l=1}^{n}k(l-1)\lambda^{l}(\lambda_a x)(n-l).
\]
Substitute the expression above into \eqref{eq.interim.est} to obtain
\begin{multline*}
\left|\frac{H(n)}{a(n)} - (\lambda_a x)(n) + \sum_{j=0}^{n-1}k(j)\lambda^{j+1}(\lambda_a x)(n-j-1)\right| \leq \left|\frac{x(n)}{a(n)} - (\lambda_a x)(n)\right| + \left| \frac{k(n-1)x(0)}{a(n)}\right| \\ +\left|\sum_{j = 1}^{n-1}k(n-1-j)\frac{x(j)}{a(n)} - \sum_{l=1}^{n}k(l-1)\lambda^{l}(\lambda_a x)(n-l)\right|.
\end{multline*}
Compare the above estimate with \eqref{eq.est.converse}; these estimates are exactly analogous with $k(j-1)$ replaced by $r(j)$, $\lambda_a x$ replaced by $\lambda_a H$, and $x$ replaced by $H$. Repeat the argument above to complete the proof. 
\section{Proof of Proposition \ref{prop.periodic}}
We freely use elementary but nontrivial properties of almost periodic sequences throughout this argument and the reader is invited to consult Corduneanu \cite[Chapter 1]{corduneanu1989almost} for the requisite proofs. 

Assume $H \in PG_{a,\lambda}$. Since $H/a \in \text{AAP}(\mathbb{Z}^+)$, 
\[
\frac{H(n)}{a(n)} = \pi_H(n) + \phi(n),\quad  \text{for each }n \geq 0,
\]
for some $\pi_H \in AP(\mathbb{Z})$ and $(\phi(n))_{n \geq 0}$ such that $\phi(n)\to 0$ as $n\to\infty$. Moreover, $\pi_H$ is bounded (because it is almost periodic). Hence we may take $\lambda_a H = \pi_H$ in \eqref{def.BG_lam}, or in other words $PG_{a,\lambda} \subset BG_{a,\lambda}$. Therefore the asymptotic representation \eqref{x_asym_periodic} is valid by appealing to Theorem \ref{theorem.growth3}. Thus 
\begin{equation}\label{rep_1_periodic}
\frac{x(n)}{a(n)} = \pi_H(n) + \sum_{j=1}^n r(j)\lambda^j \pi_H(n-j) + \tilde{\phi}(n),\quad \text{for each }n \geq 0,
\end{equation}
where $\left(\tilde{\phi}(n)\right)_{n \geq 0}$ obeys $\tilde{\phi}(n)\to 0$ as $n\to\infty$. It is clear from \eqref{rep_1_periodic} that if $\lambda=0$, then $x/a \in AAP(\mathbb{Z}^+)$ and hence that $x \in PG_{a,\lambda}$. Henceforth assume that $\lambda\in (0,1]$. Consider the sequence $(\pi(n))_{n \geq 0}$ given by
\[
\pi(n) = \sum_{j=n+1}^\infty r(j)\lambda^{j+1}\pi_H(n-j), \quad \text{for each }n\geq 0,
\]
and note that it is well defined because $\pi_H(n)$ is bounded and defined for all $n \in \mathbb{Z}$. Furthermore, since $r \in \ell^1(\mathbb{Z}^+)$ and $\pi_H$ is bounded, $\lim_{n\to\infty}\pi(n)=0$. Thus adding $\pi(n)$ to both sides of \eqref{rep_1_periodic} yields
\begin{equation}\label{rep_2_periodic}
\frac{x(n)}{a(n)} = \sum_{j=0}^\infty r(j)\lambda^j \pi_H(n-j) + \Phi(n),\quad \text{for each }n \geq 0,
\end{equation}
where $\Phi(n) = \tilde{\phi}(n) - \pi(n)$ for each $n \geq 0$ and hence $\Phi(n)\to 0$ as $n\to\infty$. We claim that the sequence $(\pi_x(n))_{n \in \mathbb{Z}}$ given by
\[
\pi_x(n) = \sum_{j=0}^\infty r(j)\lambda^j \pi_H(n-j),\quad n \in \mathbb{Z},
\]
is almost periodic. To see this we require the definition of a normal sequence, which we now state:
\begin{definition}
$(\pi_H(n))_{n \in \mathbb{Z}}$ is normal if for any sequence $(\alpha(l))_{l \in \mathbb{Z}} \subset \mathbb{Z}$ there exists a subsequence $(\alpha'(l))_{l \in \mathbb{Z}}$, a sequence $(\bar{\pi}_H(n))_{n \in \mathbb{Z}}$, and an integer $L(\epsilon)$ such that 
\begin{equation}\label{UC_periodic}
\left|\pi_H\left(n+ \alpha'(l)\right) - \bar{\pi}_H(n)\right| < \epsilon \quad \text{for }l \geq L(\epsilon) \text{ and each } n \in \mathbb{Z}.
\end{equation}
In other words, $\pi_H(n+ \alpha'(l))$ converges uniformly with respect to $n \in \mathbb{Z}$ as $l \to \infty$.
\end{definition}
A sequence is almost periodic if and only if it is normal. In order to show that $\pi_x$ is normal, let $(\alpha(l))_{l \in \mathbb{Z}} \subset \mathbb{Z}$ be an arbitrary sequence. Since $\pi_H$ is normal, there exists a subsequence $(\alpha'(l))_{l \in \mathbb{Z}}$, a sequence $(\bar{\pi}_H(n))_{n \in \mathbb{Z}}$, and an integer $L(\epsilon)$ such that \eqref{UC_periodic} holds for $\pi_H$. Define the sequence $\left( \bar{\pi}_x(n) \right)_{n \in \mathbb{Z}}$ by
\[
\bar{\pi}_x(n) = \sum_{j=0}^\infty r(j)\lambda^j \bar{\pi}_H(n-j),\quad n \in \mathbb{Z}.
\]   
Suppose $l \geq L(\epsilon)$ and estimate as follows
\begin{align*}
|\pi_x(n+ \alpha'(l)) - \bar{\pi}_x(n)| \leq \sum_{j=0}^\infty |r(j)|\,|\pi_H(n - j + \alpha'(l)) - \bar{\pi}_H(n-j)| < \epsilon \,|r|_1, \quad \text{for each }n \in \mathbb{Z}.
\end{align*}
Hence $\pi_x \in AP(\mathbb{Z})$ and, by \eqref{rep_2_periodic}, $x/a \in AAP(\mathbb{Z}^+)$. Therefore $x \in PG_{a,\lambda}$, as claimed.

For the converse result, assume that $x \in PG_{a,\lambda}$, so that there exists $a \in G_\lambda$ such that $x/a \in AAP(\mathbb{Z}^+)$. As before, we can apply Theorem \ref{theorem.growth3} to show that
\begin{equation}\label{H/a_rep}
\frac{H(n)}{a(n)} = \pi_x(n) - \sum_{j=0}^{n-1}k(j)\lambda^{j+1}\pi_x(n-j-1) + \phi(n), \quad \text{for each }n \geq 0,
\end{equation}
where $\pi_x \in AP(\mathbb{Z})$ and $(\phi(n))_{n \geq 0}$ obeys $\phi(n) \to 0$ as $n\to\infty$. Once more note that if $\lambda=0$, then $H/a \in AAP(\mathbb{Z}^+)$ trivially. Assume henceforth that $\lambda \in (0,1]$ and rewrite \eqref{H/a_rep} as follows
\begin{align*}
\frac{H(n)}{a(n)} = \pi_x(n) - \sum_{l=1}^{n}k(l-1)\lambda^{l}\pi_x(n-l) + \phi(n)
= \sum_{l=0}^{n}\tilde{r}(l)\lambda^{l}\pi_x(n-l) + \phi(n), \quad \text{for each }n \geq 0,
\end{align*}
where $\tilde{r}(l) = -k(l-1)$ for each $l \geq 0$ and $k(-1) = -1$. Thus $\tilde{r} \in \ell^1(\mathbb{Z}^+)$ with $|\tilde{r}|_1 = 1+ |k|_1$. The representation above is exactly analogous to that of \eqref{rep_1_periodic} and the proof proceeds as in the previous case (with $\tilde{r}$ in the role of $r$ and $\pi_x$ in place of $\pi_H$).
\section{Proof of Proposition \ref{prop.ergodic}}
Suppose $H \in AG_{a,\lambda}$ with $\lim_{n\to\infty}(\mu_a H)(n) = {\mu_a H}^*$ and note that $AG_{a,\lambda} \subset BG_{a,\lambda}$. By Theorem \ref{theorem.growth3}, we may write
\begin{equation}\label{x_asym_ergodic}
\frac{x(n)}{a(n)} = (\lambda_a H)(n) + \sum_{j=1}^n r(j)\lambda^j (\lambda_a H)(n-j) + R(n), \quad n \geq 1,
\end{equation}
where $(R(n))_{n \geq 0}$ obeys $\lim_{n\to\infty}R(n)=0$. Note that if $\lambda = 0$ we may trivially conclude that
\[
\lim_{n\to\infty}\frac{1}{n}\sum_{j=1}^n \frac{x(j)}{a(j)} = {\mu_a H}^*,
\]
as claimed. Assume henceforth that $\lambda \in (0,1]$. Thus
\begin{equation}\label{eq.sum.initial}
\frac{1}{n}\sum_{j=1}^n \frac{x(j)}{a(j)} = \frac{1}{n}\sum_{j=1}^n \sum_{l = 0}^j r(l) \lambda^l (\lambda_a H)(j-l) + \frac{1}{n}\sum_{j=1}^n R(j), \quad n \geq 1.
\end{equation}
Begin by rewriting the first term on the right--hand side of \eqref{eq.sum.initial} as follows:
\begin{align*}
\frac{1}{n}\sum_{j=1}^n \sum_{l = 0}^j r(l) \lambda^l (\lambda_a H)(j-l) &= \frac{1}{n}\sum_{k=0}^n \sum_{j = 1 \vee k}^n r(j-k) \lambda^{j-k} (\lambda_a H)(k)\\ &= \frac{1}{n}\sum_{j=1}^n r(j) \lambda^j (\lambda_a H)(0) + \frac{1}{n}\sum_{k=1}^n \sum_{j = 1}^n r(j-k) \lambda^{j-k} (\lambda_a H)(k) \\ &= \frac{1}{n}\sum_{j=1}^n r(j) \lambda^j (\lambda_a H)(0) + \frac{1}{n}\sum_{k=1}^n \sum_{i=0}^{n-k} r(i) \lambda^{i}(\lambda_a H)(k)\\ &= \frac{1}{n}\sum_{j=1}^n r(j) \lambda^j (\lambda_a H)(0) + \frac{1}{n}\sum_{k=1}^n \rho(n-k) (\lambda_a H)(k), \quad n \geq 1,
\end{align*}
where $\rho(n) = \sum_{i = 0}^n r(i)\lambda^i$ for each $n \geq 0$. Substitute the above expression into \eqref{eq.sum.initial} to obtain
\begin{equation}\label{eq.exact.formula.key}
\frac{1}{n}\sum_{j=1}^n \frac{x(j)}{a(j)} = \frac{1}{n}\sum_{j=1}^n R(j) + \frac{1}{n}\sum_{j=1}^n r(j) \lambda^j (\lambda_a H)(0) + \frac{1}{n}\sum_{k=1}^n \rho(n-k) (\lambda_a H)(k), \quad n \geq 1.
\end{equation}
Since $r \in \ell^1(\mathbb{Z}^+)$, recall from the proof of Theorem \ref{theorem.growth2} that
\[
\lim_{n\to\infty}\rho(n) = \sum_{i = 0}^\infty r(i)\lambda^i = \frac{1}{1 - \sum_{i = 0}^\infty k(i)\lambda^{i+1}} =: \rho^*.
\]
By similar considerations, the first two terms on the right--hand side of \eqref{eq.exact.formula.key} tend to zero as $n\to\infty$. We claim that the third term also tends to a limit, namely
\[
\lim_{n\to\infty}\frac{1}{n}\sum_{k=1}^n \rho(n-k) (\lambda_a H)(k) = \frac{{\mu_a H}^*}{1 - \sum_{i = 0}^\infty k(i)\lambda^{i+1}}.
\]
We prove the limit above for ${\mu_a H}^*$ finite but our conclusions are also true when ${\mu_a H}^* = \pm \infty$, interpreting the relevant formulae correctly. Suppose ${\mu_a H}^*$ is finite and consider
\[
\left|\frac{1}{n}\sum_{k=1}^n\rho(n-k)(\lambda_a H)(k) - \frac{1}{n}\sum_{k=1}^n\rho^* (\lambda_a H)(k)\right| = \left|\frac{1}{n}\sum_{k=1}^n\{\rho(n-k) - \rho^*\}(\lambda_a H)(k)\right|, \quad n \geq 1.
\]
Now, because $\lim_{n\to\infty}\rho(n)=\rho^*$, there exists $N(\epsilon)>1$ such that for all $n \geq N(\epsilon)$ we have $|\rho(n)-\rho^*|< \epsilon$, for an arbitrary $\epsilon>0$. Define $\bar{\rho} = \sup_{n \in \mathbb{Z}^+}|\rho(n)-\rho^*|$ and $\bar{S}_H = \sup_{n \in \mathbb{Z}^+}|(\lambda_a H)(n)|$, recalling that $\lambda_a H$ is a bounded sequence. Thus
\begin{align*}
\left|\frac{1}{n}\sum_{k=1}^n\{\rho(n-k) - \rho^*\} (\lambda_a H)(k)\right| &\leq \left|\frac{1}{n}\sum_{k=1}^{n-N}\{\rho(n-k) - \rho^*\} (\lambda_a H)(k)\right| \\ & \qquad+ \left|\frac{1}{n}\sum_{k=n-N+1}^{n}\{\rho(n-k) - \rho^*\} (\lambda_a H)(k)\right| \\
&\leq \frac{\epsilon}{n}\sum_{k=1}^{n-N} |(\lambda_a H)(k)| + \left|\frac{1}{n}\sum_{k=n-N+1}^{n}\{\rho(n-k) - \rho^*\} (\lambda_a H)(k)\right|\\ &\leq \frac{\epsilon(n-N)\bar{S}_H}{n} + \frac{\bar{\rho}\, \bar{S}_H\,N}{n},
\end{align*}
for each $n \geq N(\epsilon)+1$. Since it is clear that the right--hand side of the inequality above can be made arbitrarily small for $n$ sufficiently large we have proven that
\[
\lim_{n\to\infty}\left|\frac{1}{n}\sum_{k=1}^n\rho(n-k)(\lambda_a H)(k) - \frac{1}{n}\sum_{k=1}^n\rho^* (\lambda_a H)(k)\right| = 0,
\]
and therefore
\[
\lim_{n\to\infty}\frac{1}{n}\sum_{k=1}^n\rho(n-k)(\lambda_a H)(k) = \lim_{n\to\infty}\frac{1}{n}\sum_{k=1}^n\rho^* (\lambda_a H)(k) = \rho^* {\mu_a H}^* = \frac{\mu_H^*}{1-\sum_{i=0}^\infty \lambda^{i+1}k(i)},
\]
as claimed. Using the limit above and sending $n\to\infty$ in \eqref{eq.exact.formula.key}  yields the desired conclusion.

Conversely, assume $x \in AG_{a,\lambda}$. By Theorem \ref{theorem.growth3}, we can write
\begin{equation}\label{H_asym_ergodic}
\frac{H(n)}{a(n)} = (\lambda_a x)(n) - \sum_{j=0}^{n-1} k(j)\lambda^{j+1} (\lambda_a x)(n-j-1)+ R(n), \quad n \geq 1,
\end{equation}
where $(R(n))_{n \geq 0}$ obeys $\lim_{n\to\infty}R(n)=0$. Once more note that the case $\lambda=0$ is trivial and assume that $\lambda \in (0,1]$. Define $k(-1) = -1$, so that 
\begin{align*}
\frac{H(n)}{a(n)} &= - \sum_{j=-1}^{n-1} k(j)\lambda^{j+1} (\lambda_a x)(n-j-1)+ R(n)= - \sum_{l=0}^{n} k(l-1)\lambda^{l} (\lambda_a x)(n-l)+ R(n)\\
&= \sum_{l=0}^{n} \tilde{r}(l)\lambda^{l} (\lambda_a x)(n-l)+ R(n), \quad n \geq 1,
\end{align*}
where $\tilde{r}(l) = -k(l-1)$ for each $l \geq 0$. At this point we have a formula exactly analogous to equation \eqref{x_asym_ergodic}. Repeating the argument from the previous case we arrive at the analogue of equation \eqref{eq.exact.formula.key}, namely
\begin{equation}\label{eq.keyidentity.converse}
\frac{1}{n}\sum_{j=1}^n \frac{H(j)}{a(j)} = \frac{1}{n}\sum_{j=1}^n R(j) + \frac{1}{n}\sum_{j=1}^n \tilde{r}(j)\lambda^j (\lambda_a x)(0) + \frac{1}{n}\sum_{j=1}^n \tilde{\rho}(n-j)(\lambda_a x)(j),
\end{equation}
where $\tilde{\rho}(n) = \sum_{i = 0}^n \tilde{r}(i)\lambda^i$ for $n \geq 0$. Now notice that
\begin{align*}
\tilde{\rho}(n) = \sum_{i = 0}^n \tilde{r}(i)\lambda^i = -\sum_{i = 0}^n k(i-1)\lambda^i = 1-\sum_{i = 1}^n k(i-1)\lambda^i = 1-\sum_{j = 0}^{n-1} k(j)\lambda^{j+1}, \quad n \geq 1.
\end{align*}
Hence $\lim_{n\to\infty}\tilde{\rho}(n) = 1 - \sum_{j = 0}^\infty k(j)\lambda^{j+1}$. By the same argument as before, we have that
\[
\lim_{n\to\infty}\frac{1}{n}\sum_{j=1}^n \tilde{\rho}(n-j)(\lambda_a x)(j) = {\mu_a x}^*\left( 1 - \sum_{j = 0}^\infty k(j)\lambda^{j+1} \right).
\]
Therefore, by sending $n\to\infty$ in \eqref{eq.keyidentity.converse}, we have
\[
\lim_{n\to\infty}\frac{1}{n}\sum_{j=1}^n \frac{H(j)}{a(j)} = {\mu_a x}^*\left( 1 - \sum_{j = 0}^\infty k(j)\lambda^{j+1} \right),
\]
as required.
\section{Proof of Theorem~\ref{theorem.fluct}}
\subsection{A bound on convolution growth}
We note that results very similar to Lemma \ref{lemma.convdiva} and Theorem \ref{theorem.fluct} are part of the well--established theory in the area of Volterra equations and hence the following results are in some sense ``known''. Nonetheless, providing our own proofs and formulations is most convenient from a presentational viewpoint. Furthermore, our interest in stochastic equations (cf. Section \ref{sec.examples}) and linearisation (cf. Section \ref{linearisation}) strongly motivates both the results of this section and our presentational emphasis on unifying the cases when $\Lambda_a |H|$ (resp. $\Lambda_a |x|$) is zero, finite, or infinite. 

We first prove a preliminary lemma.
\begin{lemma} \label{lemma.convdiva}
Suppose that $a$ is an increasing sequence with $a(n)\to\infty$ as $n\to\infty$ and that $k$ is summable. If $\Lambda_a|H|\in [0,\infty)$, then 
\begin{equation} \label{eq.kasthdiva}
\limsup_{n\to\infty} \left|\frac{1}{a(n)}\sum_{j=1}^n k(n-j) H(j)\right| \leq \sum_{j=0}^\infty |k(j)| \cdot \Lambda_a|H|.
\end{equation}
\end{lemma}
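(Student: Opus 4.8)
The plan is to bound the convolution by splitting it into an ``early'' block, whose indices are fixed, and a ``tail'' block, whose summands are uniformly controlled once $n$ is large. The key structural fact is that $a$ is increasing and diverges to infinity, so any finite initial block $\sum_{j=1}^{M} k(n-j)H(j)$, divided by $a(n)$, tends to zero as $n\to\infty$ (the numerator is a fixed finite sum while $a(n)\to\infty$). This lets me discard the contribution of the first $M$ terms of $H$ at no asymptotic cost, regardless of how large the individual values $H(1),\dots,H(M)$ happen to be.

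First I would fix $\epsilon>0$ and use the hypothesis $\Lambda_a|H|<\infty$ to choose $N_1(\epsilon)$ so that $|H(j)|\le (\Lambda_a|H|+\epsilon)\,a(j)$ for all $j\ge N_1$. For the tail indices $j\ge N_1$ in the sum I would bound $|H(j)|$ by $(\Lambda_a|H|+\epsilon)a(j)$ and then exploit monotonicity of $a$ via $a(j)\le a(n)$ for $j\le n$, so that $a(j)/a(n)\le 1$; this collapses the weighted sum into $(\Lambda_a|H|+\epsilon)\sum |k(\cdot)|$, which is bounded by $(\Lambda_a|H|+\epsilon)\,|k|_1$ because $k\in\ell^1$. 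Concretely, for $n>N_1$ I would write
\[
\left|\frac{1}{a(n)}\sum_{j=1}^n k(n-j)H(j)\right|
\le \frac{1}{a(n)}\left|\sum_{j=1}^{N_1-1} k(n-j)H(j)\right|
+ \frac{1}{a(n)}\sum_{j=N_1}^{n} |k(n-j)|\,|H(j)|,
\]
and then estimate the second term by $(\Lambda_a|H|+\epsilon)\sum_{j=N_1}^n |k(n-j)|\,(a(j)/a(n)) \le (\Lambda_a|H|+\epsilon)\,|k|_1$.

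Taking $\limsup_{n\to\infty}$, the first (finite) block vanishes as explained above, leaving
\[
\limsup_{n\to\infty}\left|\frac{1}{a(n)}\sum_{j=1}^n k(n-j)H(j)\right|
\le \left(\Lambda_a|H|+\epsilon\right)\,|k|_1.
\]
Letting $\epsilon\to 0^+$ then yields the claimed bound, since $|k|_1=\sum_{j=0}^\infty |k(j)|$ and $\Lambda_a|H|$ is finite by assumption.

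The main obstacle to watch is the interplay between the truncation index $N_1$ and the divergence of $a$: I must ensure the initial block really is a \emph{fixed} finite sum before sending $n\to\infty$, so the order of quantifiers matters (fix $\epsilon$, hence $N_1$, then let $n\to\infty$, then let $\epsilon\to0$). The only place monotonicity of $a$ is genuinely used is in the bound $a(j)/a(n)\le 1$ for $j\le n$; this is what prevents the tail weights from being amplified and is what makes the constant exactly $|k|_1$ rather than something larger. Everything else is routine $\ell^1$ bookkeeping, so I do not anticipate any deeper difficulty.
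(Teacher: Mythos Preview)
Your argument is correct and matches the paper's proof essentially step for step: fix $\epsilon$, split at $N_1(\epsilon)$, use monotonicity of $a$ to bound $a(j)/a(n)\le 1$ on the tail, and let the initial block vanish against $a(n)\to\infty$. One small wording point: the initial block $\sum_{j=1}^{N_1-1}k(n-j)H(j)$ is not literally a ``fixed finite sum'' (it depends on $n$ through $k(n-j)$), but it is uniformly bounded since $k\in\ell^1$ is bounded and there are only $N_1-1$ terms, so your conclusion stands.
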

\begin{proof}
Since $\Lambda_a|H|\in [0,\infty)$, it follows for every $\epsilon>0$ that there is $N(\epsilon)>0$ such that 
\[
|H(n)|\leq (\epsilon + \Lambda_a|H|)a(n), \quad n\geq N(\epsilon).
\]
Suppose $n\geq N(\epsilon)$ and estimate as follows:
\begin{align*}
\left|\frac{1}{a(n)}\sum_{j=1}^n k(n-j) H(j)\right| &\leq \frac{1}{a(n)}\sum_{j=1}^{N-1} |k(n-j)| |H(j)| + \frac{1}{a(n)}\sum_{j=N}^{n} |k(n-j)| |H(j)| \\
&\leq \frac{1}{a(n)}\sum_{j=1}^{N-1} |k(n-j)| \cdot \sup_{1\leq j\leq N-1}|H(j)| 
+ (\epsilon + \Lambda_a|H|) \sum_{j=N}^{n} |k(n-j)| \frac{a(j)}{a(n)} \\
&\leq \frac{1}{a(n)}\sum_{l=n-(N-1)}^{n-1} |k(l)|  \sup_{1\leq j\leq N-1}|H(j)|  + (\epsilon + \Lambda_a|H|) \sum_{l=0}^{n-N} |k(l)|. 
\end{align*}
Since $k$ is summable and $a(n)\to\infty$ as $n\to\infty$, we have 
\[
\limsup_{n\to\infty} \left|\frac{1}{a(n)}\sum_{j=1}^n k(n-j) H(j)\right|
\leq (\epsilon + \Lambda_a|H|) \sum_{l=0}^\infty |k(l)|.
\]
Since $\epsilon>0$ is arbitrary, letting $\epsilon\to 0^+$ yields the result.
\end{proof}

\subsection{Proof of Theorem~\ref{theorem.fluct}}
Suppose that $\Lambda_a|H|\in [0,\infty)$. 
Since $r$ is summable and $\Lambda_a|H|\in [0,\infty)$, we have from Lemma~\ref{lemma.convdiva} that 
\[
\limsup_{n\to\infty} \left|\frac{1}{a(n)}\sum_{j=1}^n r(n-j) H(j)\right| \leq \sum_{j=0}^\infty |r(j)| \cdot \Lambda_a|H|.
\]
Since $r$ is summable and $a(n)\to \infty$ as $n\to\infty$, from the representation \eqref{eq.xrep} we get 
\begin{equation*} 
\limsup_{n\to\infty} \frac{|x(n)|}{a(n)}\leq \sum_{j=0}^\infty |r(j)| \cdot \Lambda_a|H|.
\end{equation*}
Hence 
\begin{equation} \label{eq.Lxupper}
\Lambda_a|x|\leq  \sum_{j=0}^\infty |r(j)| \cdot \Lambda_a|H|.
\end{equation}
Suppose on the other hand that $\Lambda_a|x|\in [0,\infty)$. Rearranging \eqref{eq.x} yields
\[
H(n+1)=x(n+1)-\sum_{j=0}^n k(n-j) x(j) = x(n+1)- k(n)x(0) - \sum_{j=1}^n k(n-j) x(j).
\]
By Lemma~\ref{lemma.convdiva} we have 
\[
\limsup_{n\to\infty} \left|\frac{1}{a(n)}\sum_{j=1}^n k(n-j) x(j)\right|\leq \sum_{j=0}^\infty |k(j)| \cdot  \Lambda_a|x|.
\]
Since $a$ is increasing and $k(n)\to 0$ as $n\to\infty$, we have 
\begin{align*}
\limsup_{n\to\infty} \frac{|H(n+1)|}{a(n+1)}  &\leq 
\limsup_{n\to\infty} \left\{\frac{|x(n+1)|}{a(n+1)} +  \frac{|k(n)|}{a(n+1)}|x(0)| 
+\frac{1}{a(n)}\left|\sum_{j=1}^n k(n-j) x(j)\right| \cdot \frac{a(n)}{a(n+1)}\right\}\\
&\leq \Lambda_a|x| +\sum_{j=0}^\infty |k(j)| \cdot  \Lambda_a|x|.
\end{align*}
Therefore
\begin{equation} \label{eq.LHupper}
\Lambda_a|H|\leq \Lambda_a|x| \left(1 +\sum_{j=0}^\infty |k(j)| \right). 
\end{equation}

To prove part (a.) of the result, suppose that $\Lambda_a|H|=0$. By \eqref{eq.Lxupper}, we have that $\Lambda_a|x|=0$. On the other hand, if  
$\Lambda_a|x|=0$, by \eqref{eq.LHupper}, we have $\Lambda_a|H|=0$. 

To prove part (b.), we start by showing that $\Lambda_a|H|\in (0,\infty)$ implies $\Lambda_a|x|\in (0,\infty)$. Suppose therefore that 
$\Lambda_a|H|\in (0,\infty)$. Then by \eqref{eq.Lxupper}, we have that $\Lambda_a|x|\in [0,\infty)$. Suppose that $\Lambda_a|x|=0$. Then by part (a.), we have $\Lambda_a|H|=0$, a contradiction. Thus we must have $\Lambda_a|x|\in (0,\infty)$. 

To prove the converse statement, we suppose now that $\Lambda_a|x|\in (0,\infty)$. By \eqref{eq.LHupper} it follows that 
$\Lambda_a|H|\in [0,\infty)$. If we assume that $\Lambda_a|H|=0$, then by part (a.), we have that $\Lambda_a|x|=0$, which gives a contradiction. Therefore we must have $\Lambda_a|H|\in (0,\infty)$.

To prove part (c.), we start by showing that $\Lambda_a|H|=+\infty$ implies $\Lambda_a|x|=+\infty$. Suppose not, so that $\Lambda_a|x|\in [0,\infty)$. Then the argument used to deduce \eqref{eq.LHupper} is valid and we have that $\Lambda_a|H|<+\infty$, which is a contradiction. To prove the reverse implication, we have by hypothesis that $\Lambda_a|x|=+\infty$. Suppose now that 
$\Lambda_a|H|<+\infty$. Then the argument used to prove \eqref{eq.Lxupper} is valid, and we have that $\Lambda_a|x|<+\infty$, which 
is a contradiction.
\section{Proof of Theorem \ref{thm.phi.moments}}
Estimating from \eqref{eq.xrep} we have 
\begin{align}\label{eq.phi.est.1}
|x(n)| \leq \sum_{j=0}^n |r(n-j)| |H(j)| \leq |r|_1 \sum_{j=0}^n \frac{|r(n-j)|}{\sum_{l=0}^n |r(l)|} |H(j)|, \quad n \geq 1.
\end{align}
Now apply $\phi$ to the expression above and use Jensen's inequality to obtain
\[
\phi(|x(n)|) \leq  \phi\left(|r|_1 \sum_{j=0}^n \frac{|r(n-j)|}{\sum_{l=0}^n |r(l)|} |H(j)|\right) \leq \frac{1}{\sum_{l=0}^n |r(l)|} \sum_{j=0}^n |r(n-j)|  \phi\left(|r|_1 |H(j)|\right), \quad n \geq 1.
\]
Since $r \in \ell^1(\mathbb{Z}^+)$, there exists an $N_1(\epsilon)>1$ such that $\sum_{l=0}^n |r(l)| > (1-\epsilon) |r|_1$ for all $n \geq N_1(\epsilon)$ and hence $1/\sum_{l=0}^{N_1} |r(l)| < 1/(1-\epsilon)|r|_1$ for all $n \geq N_1(\epsilon)$, with $\epsilon \in (0,1)$ arbitrary. Returning to \eqref{eq.phi.est.1}, we have that
\[
\phi(|x(n)|) \leq \frac{1}{(1-\epsilon) |r|_1} \sum_{j=0}^n |r(n-j)|  \phi\left(|r|_1 |H(j)|\right), \quad n \geq N_1(\epsilon).
\]
With $N$ sufficiently large, summing over the previous inequality yields
\begin{align*}
\sum_{n = N_1}^N \phi(|x(n)|) &\leq \frac{1}{(1-\epsilon) |r|_1}\sum_{n = N_1}^N  \sum_{j=0}^n |r(n-j)|  \phi\left(|r|_1 |H(j)|\right)\\ &= \frac{1}{(1-\epsilon) |r|_1}\sum_{j=0}^N  \left\{\sum_{n=N_1 \wedge j}^N |r(n-j)|\right\}  \phi\left(|r|_1 |H(j)|\right)\leq \frac{1}{1-\epsilon}\sum_{j=0}^N \phi\left(|r|_1 |H(j)|\right).
\end{align*}
Adding the remaining terms to the sums on the left--hand side of the above inequality, we have
\begin{align*}
\sum_{n = 0}^N \phi(|x(n)|) &\leq \frac{1}{1-\epsilon}\sum_{j=0}^N \phi\left(|r|_1 |H(j)|\right) + \sum_{n=0}^{N_1-1} \phi(|x(n)|).
\end{align*}
Therefore 
\[
\limsup_{N\to \infty}\frac{1}{N}\sum_{n = 0}^N \phi(|x(n)|) \leq \frac{1}{1-\epsilon}\limsup_{N\to \infty}\frac{1}{N}\sum_{j=0}^N \phi\left(|r|_1 |H(j)|\right),
\]
and letting $\epsilon\to 0^+ $ gives the desired conclusion. 

The second claim is proven analogously; first note that we can write
\[
H(n) = \sum_{j=0}^n \rho(n-j)x(j), \quad n \geq 1,
\]
where $\rho(j) = -k(j+1)$, $\rho(0)=1$ and $|\rho|_1 = 1 + |k|_1$. Now apply the argument above with $\rho$ in place of $r$ to complete the proof.
\section{Justification of Example \ref{eg.non_unit_multiplier}}
In this section we use the standard probabilistic notation $x \overset{d}{\sim} N(\mu,\sigma^2)$ to denote the random variable $x$ having a normal distribution with mean $\mu$ and variance $\sigma^2$.

We cannot proceed by direct calculation due to the nonstationarity of $(x(n))_{n \geq 0}$. Instead, by extending the filtration in a suitable manner, write
\begin{equation}\label{eq.rep.extended}
x(n) = x^*(n) + R(n), \quad\mbox{for each }n \geq 1,
\end{equation}
where $x^*(n) = \sum_{j = -\infty}^n r(n-j)H(j)$ for $n \geq 0$ and $R(n) = r(n)x(0) - \sum_{j = -\infty}^0 r(n-j)H(j)$ for $n \geq 0$. Since $r \in \ell^1(\mathbb{Z}^+) \subset \ell^2(\mathbb{Z}^+)$, we can use the dominated convergence theorem to show that 
\[
\mathbb{E} [ x^*(n) ] = 0, \quad \text{Var}[x^*(n)] = \mathbb{E} [ (x^*)^2(n) ] = \sigma^2 \sum_{l=0}^\infty r^2(l), \quad \mbox{for each }n \geq 0.
\]
In fact, $(x^*(n))_{n \geq 0}$ is a strongly stationary sequence, since $x^*(n) \overset{d}{\sim} N\left(0,\sigma^2 \sum_{l=0}^\infty r^2(l)\right)$ for each $n \geq 0$. Hence, by Birkhoff's ergodic theorem,
\[
\lim_{n\to\infty}\frac{1}{n}\sum_{j = 0}^n x^*(j) = 0 \quad \text{a.s.,}\quad\text{and}\quad\lim_{n\to\infty}\frac{1}{n}\sum_{j = 0}^n (x^*)^2(j) = \sigma^2 \sum_{l=0}^\infty r^2(l) \quad \text{a.s.} 
\]
From \eqref{eq.rep.extended}, we have
\[
\frac{1}{n}\sum_{j = 0}^n x^2(j) = \frac{1}{n}\sum_{j=0}^n (x^*)^2(j) + \frac{2}{n}\sum_{j=0}^n x^*(j)R(j) + \frac{1}{n}\sum_{j=0}^n R^2(j), \quad n \geq 1.
\]
Thus if we can show that $\lim_{n\to\infty}R(n)= 0$ a.s. we will have proven that
\[
\lim_{n\to\infty}\frac{1}{n}\sum_{j = 0}^n x^2(j) = \sigma^2 \sum_{l=0}^\infty r^2(l) = \sigma^2 (|r|_2)^2 \quad a.s.
\] 
Define $R_1(n) = \sum_{j=-\infty}^0 r(n-j)H(j)$ for each $n \geq 0$. Hence
\[
R^2(n) = r^2(n)x^2(0) - 2r(n)x(0)R_1(n) + R_1^2(n), \quad n \geq 0.
\]
From the equality above, we see that $\lim_{n\to\infty}R_1(n) = 0$ a.s. would imply that $\lim_{n\to\infty}R(n) = 0$ a.s., since $r(n)\to 0$ as $n\to \infty$. Hence it remains to prove that $\lim_{n\to\infty}R_1(n) = 0$ a.s. The dominated convergence theorem can be used to show that 
\[
\mathbb{E}[R_1(n)] = 0, \quad \mathbb{E}[R_1^2(n)] = \sigma^2 \sum_{l = n}^\infty r^2(l), \quad n \geq 0.
\]
In fact, $R_1(n) \overset{d}{\sim} N\left(0,\sigma^2 \sum_{l = n}^\infty r^2(l)\right)$ for each $n \geq 0$. To see this, fix $n \geq 0$ and define the sequence $R_1^N(n) = \sum_{j = -N}^0 r(n-j)$ for $N \geq 0$. Clearly, $R_1^N(n) \to R_1(n)$ a.s. as $N \to \infty$ and $R_1^N(n) \overset{d}{\sim} N\left(0,\sigma^2 \sum_{l = n}^{n+N} r^2(l)\right)$ for each $N \geq 0$. By explicitly writing down the characteristic function we can see that $\lim_{N\to\infty}R_1^N(n)=R_1(n)$ is normal with mean zero and variance $\sigma^2 \sum_{l = n}^\infty r^2(l)$ for each $n \geq 0$.

By a standard Borel--Cantelli argument the side condition $\log(n)\sum_{l=n}^\infty r^2(l) \to 0$ as $n\to\infty$ can then be used to show that $R_1(n)\to 0$ a.s. as $n\to\infty$, completing the proof (alternatively this can be deduced from Example \ref{examp.anormal} with an appropriate choice of $(a(n))_{n \geq 0}$ and constant $K$).
\section{Proof of Theorem \ref{rapid_tails_theorem}}
From \eqref{S_defn}, we have
\[
S(a,K) = \sum_{n=1}^\infty F(-K a(n)) + G(K a(n)),
\]
where $K>0$ is a constant, $a$ is a positive increasing sequence which tends to infinity, and $G(x) = 1 - F(x)$ for each $x \in \mathbb{R}$. Choose $K = 1$ and note that the symmetry of the distribution function means that the finiteness of $S(a,1)$ is equivalent to the finiteness of $\sum_{n=1}^\infty G(a(n))$. 

For each $n > 1$, take $a_1(n) = G^{-1}(1/n)$ and note that this yields an increasing, positive sequence which tends to infinity as $n\to\infty$. Furthermore,
\[
\sum_{n=1}^\infty G(a_1(n)) = \sum_{n=1}^\infty \frac{1}{n} = \infty,
\] 
and hence the Borel--Cantelli Lemma implies that
\begin{equation}\label{rapid_lower_bound}
\limsup_{n\to\infty}\frac{|H(n)|}{G^{-1}(1/n)} \geq 1 \mbox{ a.s.}
\end{equation}
Now, because $G^{-1}(1/x) \in \mu$--SSV, there exists $\Delta>0$ such that 
\begin{equation}\label{G_inv_SSV}
\lim_{n\to\infty}\frac{G^{-1}(1/n \mu^\delta(n))}{G^{-1}(1/n)} = 1, \mbox{ for each }\delta \in [0,\Delta].
\end{equation}
Choosing $a_2(n) = G^{-1}(1/n\mu^{\delta^*}(n))$ for each $n > 1$ once more yields a positive, increasing and divergent sequence (where $\delta^*$ is chosen to be the number in $(0,\Delta]$ whose existence was assumed in \eqref{mu_sum_finite}). Thus 
\[
\sum_{n=1}^\infty G(a_2(n))< \infty,
\] 
by hypothesis. It follows from \eqref{G_inv_SSV} that
\[
\limsup_{n\to\infty}\frac{|H(n)|}{G^{-1}(1/n)} = \limsup_{n\to\infty}\frac{|H(n)|}{G^{-1}(1/n \mu^{\delta^*}(n))} \leq 1 \mbox{ a.s.}
\]
Therefore, combining the above inequality with \eqref{rapid_lower_bound}, we have
\[
\limsup_{n\to\infty}\frac{|H(n)|}{G^{-1}(1/n)} = 1 \mbox{ a.s.,}
\]
as claimed.
\section{Proof of Theorem \ref{rv_tails_theorem}}
Suppose that $(i.)$ holds and let $(a(n))_{n \geq 0}$ be an arbitrary positive increasing sequence which tends to infinity. In the notation of \eqref{S_defn}, we have
\[
S(a,K) = \sum_{n=1}^\infty \left\{ 1 - F(K a(n))+ F(a(n)) \right\}, \mbox{ for each $K > 0$}.
\]
Define $G(x) = 1 - F(x)$ for $x \in (-\infty,\infty)$ and 
\[
S_N(a,K) = \sum_{n=1}^N \left\{ G(K a(n))+ F(a(n)) \right\}, \mbox{ for each $K > 0$ and $N \geq 1$}.
\]
Since the summands are non--negative, $\lim_{N\to\infty}S_N(a,K) = S(a,K)$ either converges to a finite limit or to $+\infty$. Now consider the following dichotomy: either 
\begin{equation}\label{case_1}
S(a,K) = \infty \mbox{ for each }K \in (0,\infty),
\end{equation}
or 
\begin{equation}\label{case_2}
\mbox{there exists a }K^* \in (0,\infty) \mbox{ such that } S(a,K^*) < \infty.
\end{equation}
If \eqref{case_1} holds, then 
\[
\limsup_{n \to \infty}\frac{|H(n)|}{a(n)} = \infty \mbox{ a.s.,}
\]
by a simple application of the Borel--Cantelli Lemma. We claim that if \eqref{case_2} holds, then 
\[
S(a,K) < \infty \mbox{ for each }K \in (0,\infty),
\]
and hence that 
\[
\limsup_{n \to \infty}\frac{|H(n)|}{a(n)} = 0 \mbox{ a.s.}
\]
Let $K \in (0,\infty)$ be arbitrary. By hypothesis, $\lim_{n\to\infty}G(Ka(n))/F(-Ka(n)) = \infty$ and there exists $N_1 > 2$ such that $F(-Ka(n)) < G(K a(n))$ for all $ n \geq N_1$. Thus
\begin{align}\label{S_N_first_est}
S_N(a,K) &= \sum_{n=1}^{N_1-1}\left\{ G(Ka(n)) + F(-Ka(n)) \right\} + \sum_{n=N_1}^{N}\left\{ G(Ka(n)) + F(-Ka(n)) \right\}\nonumber\\
&< \sum_{n=1}^{N_1-1}\left\{ G(Ka(n)) + F(-Ka(n)) \right\} + 2\sum_{n=N_1}^{N} G(Ka(n)), \mbox{ for each }N > N_1.
\end{align}
Using the regular variation of $G$, we have
\[
\lim_{n\to\infty}\frac{G(K^* a(n))}{G(K a(n))} = \lim_{n\to\infty}\frac{G(K^* a(n))}{G(a(n))}\frac{G(a(n))}{G(K a(n))} = (K^*)^{-\alpha}K^\alpha =: \kappa > 0.
\]
Hence for each $\epsilon\in(0,\kappa)$ there exists an $N_2(\epsilon) > N_1$ such that for all $n \geq N_2(\epsilon)$,
\[
G(Ka(n)) < \frac{1}{\kappa-\epsilon}G(K^* a(n)).
\]
Plugging the estimate above into \eqref{S_N_first_est} yields
\begin{align*}
S_N(a,K) &< \sum_{n=1}^{N_1-1}\left\{ G(Ka(n)) + F(-Ka(n)) \right\} + 2\sum_{n=N_1}^{N_2-1} G(Ka(n)) + \frac{2}{\kappa-\epsilon}\sum_{n=N_2}^{N} G(K^*a(n))\\
&\leq \sum_{n=1}^{N_1-1}\left\{ G(Ka(n)) + F(-Ka(n)) \right\} + 2\sum_{n=N_1}^{N_2-1} G(Ka(n)) + \frac{2}{\kappa-\epsilon}S_N(a,K^*), \mbox{ for each }N > N_2.
\end{align*}
Finally, let $N \to \infty$ in the estimate above to see that $S(a,K) < \infty$ for each $K \in (0,\infty)$.

The arguments needed to tackle cases $(ii.)$ and $(iii.)$ are exactly analogous to those given above and hence the details are omitted.
\section{Proof of Theorem~\ref{theorem.nonlineargrowth}}
If $y$ is the solution of \eqref{eq.y}, then $y$ is given by 
\[
y(n)=r(n)\xi + \sum_{j=1}^n r(n-j)H(j), \quad n\geq 1
\]
where $r$ is the solution of \eqref{eq.r}. From Theorem~\ref{theorem.fluct} it follows from the fact that $\Lambda_a|H|<+\infty$ that 
\[
\limsup_{n\to\infty} \frac{|y(n)|}{a(n)}=:\Lambda_a|y| < +\infty.
\]
Define $z(n)=x(n)-y(n)$ for $n\geq 0$. Then $z(0)=0$ and for $n\geq 0$ we have from 
\eqref{eq.nonx} and \eqref{eq.y} that
\[
z(n+1)=\sum_{j=0}^n k(n-j)[f(x(j))-y(j)] = \sum_{j=0}^n k(n-j)[\phi(x(j))+z(j)],
\]
where $\phi(x):=f(x)-x$. By \eqref{eq.f2} we have that $\phi(x)/x\to 0$ as $|x|\to\infty$ and that $\phi$ is continuous. Hence we have 
\begin{equation}  \label{eq.dynz}
	z(n+1)=G(n+1)+\sum_{j=0}^n k(n-j)z(j), \quad n\geq 0; \quad z(0)=0
\end{equation}
and 
\begin{equation} \label{def.G}
	G(n+1):=\sum_{j=0}^n k(n-j)\phi(x(j)), \quad n\geq 0.
\end{equation}
Therefore, we have from \eqref{eq.xrep} and \eqref{eq.dynz} that $z$ has the representation 
\begin{equation} \label{eq.zrep}
	z(n)=\sum_{j=1}^n r(n-j)G(j)=\sum_{l=0}^{n-1} r(n-l-1) G(l+1), \quad n\geq 1.
\end{equation}
We next seek to estimate $G$, and thereby deduce a linear summation inequality for $z$. Define $|k|_1:=\sum_{j=0}^\infty |k(j)|$, $|r|_1:=\sum_{j=0}^\infty |r(j)|$,  and choose $\epsilon>0$ so that 
$\epsilon|k|_1 |r_1|<1/2$. For every $\epsilon>0$, by the properties of $\phi$, there exists a $\Phi(\epsilon)>0$ such that $|\phi(x)|\leq \Phi(\epsilon)+\epsilon|x|$ for all $x\in \mathbb{R}$. Hence
\begin{align*}
	|G(n+1)|&\leq \sum_{j=0}^n |k(n-j)||\phi(x(j))|
	\leq  \sum_{j=0}^n |k(n-j)|\left\{\Phi(\epsilon) +\epsilon|x(j)|\right\} \\
	&\leq  |k|_1\Phi(\epsilon) +\epsilon\sum_{j=0}^n |k(n-j)||z(j)| + \epsilon\sum_{j=0}^n |k(n-j)||y(j)|.
\end{align*}
Now by defining $c(n)=\sum_{j=0}^n |r(n-j)||k(j)|$, we have
\begin{align*}
	|z(n+1)| &\leq \sum_{l=0}^{n} |r(n-l)| |G(l+1)|\\
	&\leq \sum_{l=0}^{n} |r(n-l)| \left\{|k|_1\Phi(\epsilon) +\epsilon\sum_{j=0}^l |k(l-j)||z(j)| + \epsilon\sum_{j=0}^l |k(l-j)||y(j)|\right\}\\
	&\leq |r|_1|k|_1\Phi(\epsilon)+\epsilon\sum_{l=0}^{n} \sum_{j=0}^l |r(n-l)|  |k(l-j)||y(j)| + \epsilon\sum_{l=0}^{n} \sum_{j=0}^l |r(n-l)|  |k(l-j)||z(j)|\\
	&= |r|_1|k|_1\Phi(\epsilon)+\epsilon\sum_{j=0}^{n} c(n-j) |y(j)| + \epsilon\sum_{j=0}^{n} c(n-j) |z(j)|. 
\end{align*}
Therefore we have for $n\geq 0$ the estimate
\[
|z(n+1)|\leq |r|_1|k|_1\Phi(\epsilon)+\epsilon\sum_{j=0}^{n} c(n-j) |y(j)| + \epsilon\sum_{j=0}^{n} c(n-j) |z(j)|.
\]
Define 
\[
H_2(n+1)=|r|_1|k|_1\Phi(\epsilon)+\epsilon\sum_{j=0}^{n} c(n-j) |y(j)|, \quad n\geq 0
\]
and 
\begin{equation} \label{def.r2}
	r_2(n+1)=\epsilon \sum_{j=0}^{n}  c(n-j) r(j), \quad n\geq 0;\quad r_2(0)=1.
\end{equation}
Let $z_2$ be the solution of the summation equation
\[
z_2(n+1)= H_2(n+1) + \epsilon\sum_{j=0}^{n} c(n-j) z_2(j), \quad n\geq 0;\quad z_2(0)=1.
\]
Then $|z(n)|\leq z_2(n)$ for all $n\geq 0$. Moreover, $z_2$ has the representation 
\begin{equation} \label{eq.z2rep}
	z_2(n)=r_2(n)+\sum_{j=1}^n r_2(n-j)H_2(j), \quad n\geq 1.
\end{equation}
To determine the asymptotic behaviour of $z_2$, we use the 
representation \eqref{eq.z2rep}. This requires knowledge of the asymptotic behaviour of $H_2$.
We also need to check that $r_2$ is summable. Since $c(n)\geq 0$ for each $n\geq 0$ and 
\[
\epsilon \sum_{n=0}^\infty c(n)=\epsilon|k|_1 |r|_1<\frac{1}{2} 
\]
it follows that $r_2$ is summable, and of course $r(n)\to 0$ as $n\to\infty$. Since 
$c$ is summable, and $\Lambda_a|y|<+\infty$, we have from Lemma~\ref{lemma.convdiva} that 
\[
\limsup_{n\to\infty}
\frac{1}{a(n)}\left|\sum_{j=1}^{n} c(n-j) |y(j)|\right|\leq \sum_{j=0}^\infty c(j) \cdot \Lambda_a|y|.
\]
Therefore as  
\[
H_2(n+1)= |r|_1|k|_1\Phi(\epsilon)+\epsilon c(n) |y(0)|+\epsilon\sum_{j=1}^{n} c(n-j) |y(j)|,
\]  
and $c(n)\to 0$ as $n\to\infty$, we have 
\[
\limsup_{n\to\infty} \frac{|H_2(n+1)|}{a(n)}\leq \epsilon\sum_{j=0}^\infty c(j) \cdot \Lambda_a|y|.
\]
Also, because $a$ is increasing, 
\[
\Lambda_a|H_2|:=
\limsup_{n\to\infty} \frac{|H_2(n)|}{a(n)}\leq \epsilon\sum_{j=0}^\infty c(j)\cdot \Lambda_a|y|.
\]
Since $r_2$ is summable and $\Lambda_a|H_2|<+\infty$, applying Lemma~\ref{lemma.convdiva} once more yields   
\begin{equation}\label{limsup_est_H_2}
\limsup_{n\to\infty} \frac{1}{a(n)}\left|\sum_{j=1}^n r_2(n-j)H_2(j)\right| \leq \sum_{j=0}^\infty r_2(j) \Lambda_a|H_2|
\leq \sum_{j=0}^\infty r_2(j) \cdot \epsilon\sum_{j=0}^\infty c(j) \cdot \Lambda_a|y|.
\end{equation}
We wish to identify a bound on the right hand side in terms of $\epsilon$ and quantities which are explicitly $\epsilon$--independent. We start by estimating the sum of $r_2$. Since $r_2(n)\geq 0$ for all $n\geq 0$, by \eqref{def.r2} and the fact that $r_2(0)=1$ we have 
\[
\sum_{n=0}^\infty r_2(n)-1=
\sum_{n=0}^\infty r_2(n+1)= \sum_{n=0}^\infty \sum_{j=0}^{n} \epsilon c(n-j) r_2(j)
=\epsilon \sum_{n=0}^\infty c(n)\cdot \sum_{n=0}^\infty r_2(n).
\]
Hence as $\sum_{n=0}^\infty c(n)=|k|_1 |r|_1$,
\[
\sum_{n=0}^\infty r_2(n)=\frac{1}{1-\epsilon |k|_1 |r|_1},
\]
and combining this with \eqref{limsup_est_H_2} yields  
\[
\limsup_{n\to\infty} \frac{1}{a(n)}\left|\sum_{j=1}^n r_2(n-j)H_2(j)\right| 
\leq   \frac{1}{1-\epsilon |k|_1 |r|_1} \cdot \epsilon |k|_1 |r|_1 \Lambda_a|y|.
\] 
Thus by \eqref{eq.z2rep} and the fact that $|z(n)|\leq z_2(n)$, we get 
\[
\limsup_{n\to\infty} \frac{|z(n)|}{a(n)}\leq   \frac{1}{1-\epsilon |k|_1 |r|_1} \cdot \epsilon |k|_1 |r|_1 \Lambda_a|y|.
\]
Since $y$, $k$, $r$, $z$, and $a$ are $\epsilon$--independent, letting $\epsilon\to 0^+$ gives 
\[
\limsup_{n\to\infty} \frac{|z(n)|}{a(n)}=0,
\]
which proves the result.
\section{Proof of Theorem~\ref{theorem.nongrowth2}}
Suppose that $H\in BG_{a,\lambda}$. By Theorem \ref{theorem.growth3}, the solution $y$ of \eqref{eq.y} obeys
\[
\frac{y(n)}{a(n)} \sim (\lambda_a y)(n) + \sum_{j=1}^n r(j) \lambda^j (\lambda_a y)(n-j), \quad \text{as }n\to\infty.
\]
Furthermore, by Theorem \ref{theorem.nonlineargrowth}, $x(n)/a(n) \sim y(n)/a(n)$ as $n\to\infty$. Thus we immediately have that \eqref{x_asym_relation_nonlinear} holds and hence that $x \in BG_{a,\lambda}$. 

Conversely, suppose that $x \in BG_{a,\lambda}$. For each $n \geq 0$,
\begin{align*}
\frac{H(n+1)}{a(n+1)} &= \frac{x(n+1)}{a(n+1)} - \sum_{j=0}^n k(n-j)\frac{f(x(j))}{a(n+1)} \\
&= \frac{x(n+1)}{a(n+1)} - \sum_{j=0}^n \frac{k(n-j)}{a(n+1)} \left\{ f(x(j)) - x(j) \right\} + \sum_{j=0}^n k(n-j) \frac{x(j)}{a(n+1)}.
\end{align*}
Hence
\begin{multline}\label{linearisation_est_1}
\left| \frac{H(n+1)}{a(n+1)} - (\lambda_a x)(n+1) + \sum_{j=0}^n k(j)\lambda^{j+1}(\lambda_a x)(n-j-1) \right| \leq \left| \frac{x(n+1)}{a(n+1)} - (\lambda_a x)(n+1) \right|\\ + \left| \sum_{j=0}^n \frac{k(n-j)}{a(n+1)} \left\{ x(j) - f(x(j)) \right\} \right| + \left| \sum_{j=0}^n k(j)\lambda^{j+1}(\lambda_a x)(n-j-1) - \sum_{j=0}^n k(n-j) \frac{x(j)}{a(n+1)} \right|.
\end{multline}
The first term on the right--hand side of \eqref{linearisation_est_1} tends to zero as $n\to\infty$ by hypothesis and the final term tends to zero as $n\to\infty$ by the same argument used in Theorem \ref{theorem.growth3}. Thus it remains to show that
\[
\lim_{n\to\infty}\left| \sum_{j=0}^n \frac{k(n-j)}{a(n+1)} \left\{ x(j) - f(x(j)) \right\} \right| = 0.
\] 
Let $\phi(x) = x - f(x)$ for each $x \in \mathbb{R}$ and note that $\phi$ is continuous by hypothesis. By dint of \eqref{eq.f2}, for each $\epsilon>0$ there exists a $\Phi(\epsilon)>0$ such that
\[
|\phi(x)| \leq \epsilon x + \Phi(\epsilon), \quad \text{for each }x \in \mathbb{R}.
\] 
Now estimate as follows
\begin{align*}
\left| \sum_{j=0}^n \frac{k(n-j)}{a(n+1)} \left\{ x(j) - f(x(j)) \right\} \right| &\leq  \sum_{j=0}^n \left|\frac{k(n-j)}{a(n+1)}\right| \left\{ \epsilon |x(j)|+ \Phi(\epsilon) \right\} \leq \epsilon |k|_1 \bar{x} + \frac{\Phi(\epsilon) |k|_1}{|a(n+1)|},
\end{align*}
where $\bar{x}>0$ uniformly bounds $x(j)/a(n+1)$ (which is possible since $x \in BG_{a,\lambda}$). Now letting $n\to\infty$ and then $\epsilon\downarrow 0$ in the estimate above yields the desired conclusion.
\bibliography{linear_summation_volterra}

\begin{thebibliography}{10}

\bibitem{acemoglu2008introduction}
D.~Acemoglu.
\newblock {\em Introduction to modern economic growth}.
\newblock Princeton University Press, 2008.

\bibitem{agarwal2000difference}
R.~P. Agarwal.
\newblock {\em Difference equations and inequalities: theory, methods, and
  applications}.
\newblock CRC Press, 2000.

\bibitem{anderson1978super}
C.~Anderson.
\newblock Super-slowly varying functions in extreme value theory.
\newblock {\em Journal of the Royal Statistical Society. Series B
  (Methodological)}, 40(2):197--202, 1978.

\bibitem{appleby2016unbounded}
J.~A. Appleby and D.~D. Patterson.
\newblock On the admissibility of unboundedness properties of forced
  deterministic and stochastic sublinear {V}olterra summation equations.
\newblock {\em Electron. J. Qual. Theory Differ. Equ.}, (63):1--44, 2016.

\bibitem{appleby2006exact}
J.~A. Applelby, I.~Gy{\H{o}}ri, and D.~W. Reynolds.
\newblock On exact convergence rates for solutions of linear systems of
  {V}olterra difference equations†.
\newblock {\em Journal of Difference Equations and Applications},
  12(12):1257--1275, 2006.

\bibitem{baillie1996long}
R.~T. Baillie.
\newblock Long memory processes and fractional integration in econometrics.
\newblock {\em Journal of Econometrics}, 73(1):5--59, 1996.

\bibitem{bingham1989regular}
N.~H. Bingham, C.~M. Goldie, and J.~L. Teugels.
\newblock {\em Regular variation}, volume~27.
\newblock Cambridge University Press, 1989.

\bibitem{bojanic1971slowly}
R.~Bojanic and E.~Seneta.
\newblock Slowly varying functions and asymptotic relations.
\newblock {\em Journal of Mathematical Analysis and Applications},
  34(2):302--315, 1971.

\bibitem{brockwell2013time}
P.~J. Brockwell and R.~A. Davis.
\newblock {\em Time series: theory and methods}.
\newblock Springer Science \& Business Media, 2013.

\bibitem{chover1973functions}
J.~Chover, P.~Ney, and S.~Wainger.
\newblock Functions of probability measures.
\newblock {\em Journal d'analyse math{\'e}matique}, 26(1):255--302, 1973.

\bibitem{corduneanu1973integral}
C.~Corduneanu.
\newblock {\em Integral equations and stability of feedback systems}.
\newblock Academic Press, Inc., 1973.

\bibitem{corduneanu1989almost}
C.~Corduneanu.
\newblock {\em Almost periodic functions}.
\newblock Chelsea Pub. Co., 1989.

\bibitem{d2014multiple}
H.~d{'}{A}lbis, E.~Augeraud-V{\'e}ron, and H.~J. Hupkes.
\newblock Multiple solutions in systems of functional differential equations.
\newblock {\em Journal of Mathematical Economics}, 52:50--56, 2014.

\bibitem{diblik2011weighted}
J.~Dibl\'{i}k, M.~Ru{\v{z}}i{\v{c}}kov{\'a}, E.~Schmeidel, and M.~Zbaszyniak.
\newblock Weighted asymptotically periodic solutions of linear {V}olterra
  difference equations.
\newblock {\em Abstract and Applied Analysis}, 2011:1--14.

\bibitem{ding1996modeling}
Z.~Ding and C.~W. Granger.
\newblock Modeling volatility persistence of speculative returns: a new
  approach.
\newblock {\em Journal of Econometrics}, 73(1):185--215, 1996.

\bibitem{elaydi2005introduction}
S.~Elaydi.
\newblock {\em An introduction to difference equations}.
\newblock Springer Science \& Business Media, 2005.

\bibitem{gol1966discrete}
{\'E}.~I. Gol'denger{\^s}hel'.
\newblock Discrete analog of an integral equation of {V}olterra type on the
  half-axis.
\newblock {\em Uspekhi Matematicheskikh Nauk}, 21(2):223--225, 1966.

\bibitem{GLS}
G.~Gripenberg, S.-O. Londen, and O.~Staffans.
\newblock {\em Volterra integral and functional equations}, volume~34.
\newblock Cambridge University Press, 1990.

\bibitem{gyori2012boundedness}
I.~Gy{\"o}ri and E.~Awwad.
\newblock On the boundedness of the solutions in nonlinear discrete {V}olterra
  difference equations.
\newblock {\em Advances in Difference Equations}, 2012(1):1--20, 2012.

\bibitem{gyHori2008new}
I.~Gy{\H{o}}ri and L.~Horv{\'a}th.
\newblock New limit formulas for the convolution of a function with a measure
  and their applications.
\newblock {\em Journal of Inequalities and Applications}, 2008(1):1, 2008.

\bibitem{gyHori2009sharp}
I.~Gy{\H{o}}ri and D.~W. Reynolds.
\newblock Sharp conditions for boundedness in linear discrete {V}olterra
  equations.
\newblock {\em Journal of Difference Equations and Applications},
  15(11-12):1151--1164, 2009.

\bibitem{gyHori2010admissibility}
I.~Gy{\H{o}}ri and D.~W. Reynolds.
\newblock On admissibility of the resolvent of discrete {V}olterra equations.
\newblock {\em Journal of Difference Equations and Applications},
  16(12):1393--1412, 2010.

\bibitem{gyHori2010asymptotically}
I.~Gy{\H{o}}ri and D.~W. Reynolds.
\newblock On asymptotically periodic solutions of linear discrete {V}olterra
  equations.
\newblock {\em Fasciculi Mathematici}, 44(1):53--67, 2010.

\bibitem{henriquez1991asymptotically}
H.~R. Henr{\'\i}quez.
\newblock Asymptotically almost-periodic solutions of abstract differential
  equations.
\newblock {\em Journal of Mathematical Analysis and Applications},
  160(1):157--175, 1991.

\bibitem{leontief}
W.~W. Leontief.
\newblock {\em The structure of {A}merican economy, 1919-1939}.
\newblock Oxford University Press, London and New York, 1949.

\bibitem{leontief1966input}
W.~W. Leontief.
\newblock {\em Input-output economics}.
\newblock Oxford University Press, London and New York, 1966.

\bibitem{perron1930stabilitatsfrage}
O.~Perron.
\newblock Die stabilit{\"a}tsfrage bei differentialgleichungen (in {G}erman).
\newblock {\em Mathematische Zeitschrift}, 32(1):703--728, 1930.

\bibitem{reynolds2012asymptotic}
D.~W. Reynolds.
\newblock On asymptotic constancy for linear discrete summation equations.
\newblock {\em Computers and Mathematics with Applications}, 64(7):2335--2344,
  2012.

\bibitem{solow1956contribution}
R.~M. Solow.
\newblock A contribution to the theory of economic growth.
\newblock {\em The Quarterly Journal of Economics}, 70(1):65--94, 1956.

\bibitem{song2006almost}
Y.~Song.
\newblock Almost periodic solutions of discrete {V}olterra equations.
\newblock {\em Journal of Mathematical Analysis and Applications},
  314(1):174--194, 2006.

\bibitem{song2004linearized}
Y.~Song and C.~T. Baker.
\newblock Linearized stability analysis of discrete {V}olterra equations.
\newblock {\em Journal of Mathematical Analysis and Applications},
  294(1):310--333, 2004.

\bibitem{song2006admissibility}
Y.~Song and C.~T. Baker.
\newblock Admissibility for discrete {V}olterra equations.
\newblock {\em Journal of Difference Equations and Applications},
  12(5):433--457, 2006.

\bibitem{vecchio2004resolvent}
A.~Vecchio.
\newblock On the resolvent kernel of {V}olterra discrete equations.
\newblock {\em Functional Differential Equations}, 6(1-2):191--201, 1999.

\end{thebibliography}
\bibliographystyle{abbrv}
\end{document}